\documentclass[graybox]{svmult}

\usepackage{type1cm}        % activate if the above 3 fonts are
\usepackage{makeidx}         % allows index generation
\usepackage{graphicx}        % standard LaTeX graphics tool
\usepackage{multicol}        % used for the two-column index
\usepackage[bottom]{footmisc}% places footnotes at page bottom

\usepackage{newtxtext}       % 
\usepackage[varvw]{newtxmath}       % selects Times Roman as basic font

\usepackage[margin=1in]{geometry}
\usepackage[useregional]{datetime2}
\usepackage[english]{babel}
\usepackage[utf8]{inputenc}
\usepackage[T1]{fontenc}
\usepackage{xcolor, graphicx,changepage, float, array, amsmath, mathrsfs, amsfonts, amsthm, tikz-cd, fancyhdr, enumerate, bbm, multicol,cite,refcount}

\usetikzlibrary{positioning}

\usepackage[pagebackref,colorlinks=true,urlcolor=blue,linkcolor=blue,citecolor=blue]{hyperref}
\usepackage[capitalize]{cleveref}

\makeatletter
\newcommand*{\rom}[1]{\expandafter\@slowromancap\romannumeral #1@}
\makeatother

\makeatletter
\renewcommand*\env@matrix[1][*\c@MaxMatrixCols c]{%
	\hskip -\arraycolsep
	\let\@ifnextchar\new@ifnextchar
	\array{#1}}
\makeatother

\theoremstyle{definition}

\usepackage{setspace}
\newcommand{\N}{\mathbb{N}}
\newcommand{\Z}{\mathbb{Z}}

\renewcommand{\d}{~\mathrm{d}}

\newcommand{\XX}{\mathcal{X}}
\newcommand{\YY}{\mathcal{Y}}
\newcommand{\ZZ}{\mathcal{Z}}

\newcommand{\plimgG}[3]{\mathop {#1\text{-lim}}_{#2\in #3}\,}

\definecolor{ggreen}{RGB}{0,200,0}
\definecolor{rred}{RGB}{150,0,70}
\definecolor{yyellow}{RGB}{250,210,0}
\newcommand{\edit}[3]{\color{#1}{#3}\color{black}\marginpar{\textcolor{#1}{[[#2]]}}}

\renewcommand{\sf}[1]{\edit{yyellow}{SF}{[[#1]]}}

\usepackage[normalem]{ulem}

\renewcommand{\emph}[1]{\textbf{#1}}

\makeindex             % used for the subject index
\title*{Multipliers and Disjointness from Mixing
}
\author{Sohail Farhangi,\\ Joel Moreira and\\ Rigoberto Zelada}
\institute{Sohail Farhangi \at Beijing Institute of Mathematical Sciences and Applications, Beijing, P.R.C. 101408, \email{sohail.farhangi@gmail.com}
\and Joel Moreira \at University of Warwick \email{joel.moreira@warwick.ac.uk} \and Rigoberto Zelada \at University of Warwick \email{Rigoberto.Zelada@warwick.ac.uk}}
\begin{document}

\maketitle

\abstract*{In 2005, Parreau proved that  if a measure preserving system is not strongly mixing then it contains a non-trivial factor that is disjoint from every strongly mixing system. 
Taking this construction as the starting point, we develop the complementary notions of $\mathcal U$-generated and $\mathcal U$-mixing systems, for a set $\mathcal U$ of ultrafilters, and use them to recover several classical results in ergodic theory as special cases of a unified framework.\\
We prove that a system is $\mathcal U$-mixing if and only if it is disjoint from all $\mathcal U$-generated systems.
In fact, we show that if $\mathcal Y$ is a $\mathcal U$-generated system and $\mathcal Z$ is disjoint from every $\mathcal U$-mixing system, then any joining of $\mathcal Y$ and $\mathcal Z$ remains disjoint from all $\mathcal U$-mixing systems.
We also show that every partially rigid system is a finite extension of some $\mathcal{U}$-generated system.
}

\abstract{In 2005, Parreau proved that if a measure preserving system is not strongly mixing then it contains a non-trivial factor that is disjoint from every strongly mixing system. 
Taking this construction as the starting point, we develop the complementary notions of $\mathcal U$-generated and $\mathcal U$-mixing systems, for a set $\mathcal U$ of ultrafilters, and use them to recover several classical results in ergodic theory as special cases of a unified framework.\\
We prove that a system is $\mathcal U$-mixing if and only if it is disjoint from all $\mathcal U$-generated systems.
In fact, we show that if $\mathcal Y$ is a $\mathcal U$-generated system and $\mathcal Z$ is disjoint from every $\mathcal U$-mixing system, then any joining of $\mathcal Y$ and $\mathcal Z$ remains disjoint from all $\mathcal U$-mixing systems.
We also show that every partially rigid system is a finite extension of some $\mathcal{U}$-generated system.
}

\section{Introduction}
%The study of disjointness of measure-preserving systems began with the seminal work of Furstenberg \cite{DisjointnessInErgodicTheory} in 1967.
Disjointness of measure-preserving systems, introduced in the seminal work of Furstenberg \cite{DisjointnessInErgodicTheory} in 1967, can be used to characterize many classical dualities in ergodic theory.
%Many classical dichotomies in ergodic theory can be understood from the perspective of disjointness.
In particular, we have

\begin{theorem}[{cf. \cite[Remark 10]{ErgodicGaussianSelfJoinings} or \cite[Page 145]{ErgodicTheoryViaJoinings}}]\label{thm:ClassicalDichotomies}
    Let $\mathcal{X} := (X,\mathscr{B},\mu,T)$ be a measure-preserving system.
    \begin{enumerate}[(i)]
        \item $\mathcal{X}$ is ergodic if and only if it is disjoint from every identity system.

        \item $\mathcal{X}$ is weakly mixing if and only if it is disjoint from every rotation on a compact abelian group.

        \item $\mathcal{X}$ is mildly mixing if and only if it is disjoint from every rigid system.

        \item $\mathcal{X}$ has completely positive entropy if and only if it is disjoint from every zero entropy system.
    \end{enumerate}
\end{theorem}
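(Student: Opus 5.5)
Each of the four equivalences is proved by the same two-step scheme. For the forward implications I will take a joining $\lambda$ of $\mathcal{X}$ with a system $\mathcal{Y}$ of the relevant class and show $\lambda=\mu\times\nu$. It suffices to check $\int f\otimes g\,d\lambda=\int f\,d\mu\int g\,d\nu$ for $g$ ranging over a spanning set of $L^2(\nu)$. For the converse implications, when $\mathcal{X}$ fails the property I will exhibit a nontrivial factor of $\mathcal{X}$ lying in the class. A system is never disjoint from its own nontrivial factor, because the graph joining (relative product over the factor) is not the product measure.

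For (i) and (ii) the argument is spectral. In (i), $\mathcal{Y}$ is an identity system, so for every $g$ the function $1\otimes g$ is $T\times S$-invariant. The von Neumann ergodic theorem applied in $L^2(\lambda)$ gives $\int f\otimes g\,d\lambda=\int (\mathbb{E}[f\mid\mathcal{I}_X])\otimes g\,d\lambda$. Ergodicity makes the conditional expectation the constant $\int f\,d\mu$, which gives the product. Conversely, a nontrivial invariant set $A$ gives the factor map $X\to\{0,1\}$, $x\mapsto 1_A(x)$, onto an identity system.

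For (ii), $\mathcal{Y}$ is a group rotation, so $L^2(\nu)$ is spanned by characters $\chi$ with $\chi\circ S=c\chi$, $|c|=1$. Averaging $\frac1N\sum_n c^{-n} f\circ T^n$ projects $f$ onto the $c$-eigenspace of $T$. Weak mixing kills this projection unless $c=1$ and $f$ is constant. Conversely, the Kronecker factor of a non-weakly-mixing system is a nontrivial factor isomorphic to a group rotation.

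For (iii), rigidity of $\mathcal{Y}$ along $n_k$ gives $g\circ S^{n_k}\to g$ in $L^2(\nu)$. Consider the operator $P$ on $L^2(\mu)$ defined by $\langle Pf,g\rangle=\int f\otimes\bar g\,d\lambda$. Then $\int f\cdot g\circ S^{n_k}\,d\lambda\to\int f\otimes g\,d\lambda$. Passing to a weak limit of $T^{-n_k}$ along a subsequence produces $Q$ with $\langle f,P^*g\rangle=\langle Qf,P^*g\rangle$. So the range of $P^*$ consists of functions $h$ that are IP/rigid-recurrent, with $T^{n_k}h\to h$ weakly and hence in norm since $T$ is an isometry. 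Mild mixing means there are no nonconstant rigid functions, so $P^*g$ is constant. Conversely, if $f$ is nonconstant and rigid along $n_k$, the closed $T$-invariant algebra of functions rigid along that sequence defines a nontrivial rigid factor. Checking that these functions form an algebra closed under bounded products is the main technical step.

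Part (iv) is the hardest. Forward, I will use the Pinsker factor: every joining with a zero-entropy $\mathcal{Y}$ is relatively independent over the Pinsker factors, since the disjointness of $K$-systems from zero-entropy systems is Furstenberg's theorem. Completely positive entropy makes the Pinsker factor trivial. I expect this entropy argument, via Pinsker $\sigma$-algebra conditional entropy computations, to be the main obstacle, and I would cite it rather than reprove it. Conversely, the Pinsker factor is a nontrivial zero-entropy factor.
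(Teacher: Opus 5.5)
Your proposal is correct, but it takes a different route from the paper. The paper does not prove Theorem~\ref{thm:ClassicalDichotomies} at all; it quotes it from the literature.

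\textbf{How the paper recovers (ii) and (iii).} What the paper does prove is that (ii) and (iii), for any countable abelian $G$, are instances of the duality in Theorem~\ref{thm:Disjointness}.
\begin{itemize}
\item With $\mathcal{U}=\Lambda(G)$ (or a single $p$ in the minimal ideal), $\mathcal{U}$-mixing is weak mixing, and the $\mathcal{U}$-generated systems are the discrete-spectrum systems.
\item With $\mathcal{U}=E(G^*)$, $\mathcal{U}$-mixing is mild mixing, and $\mathcal{R}=\bigcup_{p\in E(G^*)}\mathcal{G}_p\subseteq\mathcal{G}_{E(G^*)}$.
\item The converse directions come from the nontrivial $\mathcal{U}$-generated factor $\mathcal{X}^p$ of a non-$\mathcal{U}$-mixing system (Remark~\ref{rem:UMixingIFFNoUParreauFactor}).
\end{itemize}
Parts (i) and (iv) lie outside that framework. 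Every $\mathcal{U}$-mixing system is weakly mixing (Corollary~\ref{3.cor:UmixingIsWeakMixing}), so ergodicity is not of this form; and entropy is not a spectral notion. Those two parts are left to the references, as you also do for (iv).

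\textbf{How your argument differs.} You show directly that the adjoint of the Markov operator of a joining maps $L^2(\nu)$ into eigenfunctions (for (ii)) or into rigid functions (for (iii)) of $\mathcal{X}$. This is the ``spectral disjointness implies disjointness'' argument, specialized to these pairs of classes. It is shorter and self-contained. It works precisely because group rotations and rigid systems are spectrally disjoint from weakly and mildly mixing systems, respectively.

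\textbf{What the paper's heavier machinery buys.} The Parreau-type induction over relatively independent joinings in Theorem~\ref{thm:ParreauFactorsAreMultipliers} gives uniformity in $\mathcal{U}$ and covers the strong mixing case. It also gives disjointness from the larger classes $\mathcal{G}_\mathcal{U}$, where spectral arguments fail. For example, joinings of systems rigid along different idempotents can carry a Lebesgue component (Example~\ref{ex:FirstExample}). Finally, it yields the multiplier property.

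\textbf{Local corrections.}

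In the converse of (ii), the Kronecker factor of a non-ergodic system need not be isomorphic to a group rotation. A disjoint union of two non-isomorphic ergodic circle rotations has discrete spectrum, whereas all ergodic components of a group rotation are isomorphic. Handle the non-ergodic case via (i), since the two-point identity system is the rotation by $0$ on $\mathbb{Z}/2$. Use Halmos--von Neumann only when $\mathcal{X}$ is ergodic.

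In (iii), $P$ maps $L^2(\mu)\to L^2(\nu)$; it is not an operator on $L^2(\mu)$. The weak limit $Q$ is also unnecessary: the intertwining relation gives $T^{n_k}P^*g=P^*S^{n_k}g\to P^*g$ in norm directly. The step you flag as the main technical one is routine. For bounded rigid $h_1,h_2$,
\[
\|T^{n_k}(h_1h_2)-h_1h_2\|_2\le\|h_1\|_\infty\|T^{n_k}h_2-h_2\|_2+\|h_2\|_\infty\|T^{n_k}h_1-h_1\|_2 .
\]
Moreover, Lipschitz functions of rigid functions are rigid. Together these show the rigid functions are exactly $L^2$ of a $T$-invariant $\sigma$-algebra.

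In (iv), relative independence over the Pinsker factor does not follow from Furstenberg's absolute theorem alone, but you do not need it. For $\mathbb{Z}$-actions, completely positive entropy coincides with the K-property (Rokhlin--Sinai). Furstenberg's disjointness of K-systems from zero-entropy systems then gives the forward direction directly.
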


We recall that if the systems $\mathcal{X}$ and $\mathcal{Y}$ have a nontrivial common factor $\mathcal{Z}$, then the relatively independent joining of $\mathcal{X}$ and $\mathcal{Y}$ over $\mathcal{Z}$ is a nontrivial joining, so $\mathcal{X}$ and $\mathcal{Y}$ are not disjoint.
However, it is possible for systems $\mathcal{X}$ and $\mathcal{Y}$ not to have a common factor, but still have a nontrivial joining, i.e., to not be disjoint (cf. \cite[Example 7.15]{ErgodicTheoryViaJoinings}).

A class $\mathcal{C}$ of measure-preserving systems is a \textbf{characteristic class} if it is closed under factors and countable joinings.
The classes $\mathcal{I}$ of invariant systems, $\mathcal{K}$ of Kronecker systems, and $\mathcal{Z}_e$ of zero entropy systems are all characteristic classes.
%A treatment of the basic properties of characteristic classes is given in \cite{ArithmeticFunctionsOrthogonalToDeterminism}.
%It turns out that a system $\mathcal{X}$ is disjoint from every system in a characteristic class $\mathcal{C}$ if and only if no non-trivial factor of $\mathcal{X}$ belongs to  $\mathcal{C}$ (cf. \cite[Proposition 2.2]{ArithmeticFunctionsOrthogonalToDeterminism}).
%A treatment of the  is given in  
In \cite{ArithmeticFunctionsOrthogonalToDeterminism} some basic properties of characteristic classes are studied, and in particular it is proved that any nontrivial characteristic class $\mathcal{C}$ satisfies $\mathcal{I} \subseteq \mathcal{C} \subseteq \mathcal{Z}_e$.
It follows from \cite[Proposition 2.2]{ArithmeticFunctionsOrthogonalToDeterminism} that a system $\mathcal{X}$ is disjoint from every system in a characteristic class $\mathcal{C}$ if and only if no non-trivial factor of $\mathcal{X}$ belongs to  $\mathcal{C}$.
On the other hand, while the class $\mathcal{R}$ of rigid systems is not a characteristic class, it is still the case that a system $\mathcal{X}$ is mildly mixing if and only if it has no rigid factors.\footnote{Given a sequence $(n_k)_{k = 1}^\infty$ in $\mathbb N$, one has that the class $\mathcal{R}(n_k)_{k = 1}^\infty$ of systems that are rigid along $(n_k)_{k = 1}^\infty$ is a characteristic class.}

Another important class of systems in ergodic theory is the class $\mathcal{S}$ of strongly mixing systems.
The question of whether strongly mixing systems can be characterized as those disjoint from a certain class, in the spirit of \cref{thm:ClassicalDichotomies}, remained open until the 2000's. 
A positive answer was essentially found by Parreau \cite{ParreauFactor} (see also \cite[Theorem 11]{Lemanczyk2009Spectral}), who proved that any system that is not strongly mixing contains a non-trivial factor disjoint from all mixing systems.
%For example, it is still a major open problem to determine whether or not every strongly mixing system is also mixing of all orders \cite{Rohlin1949}.
%Later, Lemanczyk observed that several consequences can be derived from this result \jpm{cite his notes?}
In the present paper we explicitly describe and study a class of systems, which we call \emph{$\mathcal U$-generated systems} and denote by $\mathcal{G}$ (cf. \cref{def:ParreauFactor}), which, in view of Parreau's theorem, allows one to characterize strongly mixing in terms of disjointness as in \cref{thm:ClassicalDichotomies}.
The following is an immediate consequence of \cite{ParreauFactor}.
\begin{theorem}\label{thm:ParreauFactorForZ}
A system $\mathcal{X}$ is strongly mixing if and only if it is disjoint from every system from $\mathcal{G}$.
\end{theorem}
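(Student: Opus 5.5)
The plan is to prove the two implications separately, using the definition of $\mathcal{G}$ from \cref{def:ParreauFactor}. I read that definition as follows: for a non-principal ultrafilter $p$ on $\mathbb N$, the weak operator limit $P_p:=\plimgG{p}{n}{\N} U_T^n$ exists by compactness of the unit ball in the weak operator topology. A system belongs to $\mathcal G$ if its $\sigma$-algebra is generated by functions of the form $P_p g$, for $p$ ranging over the relevant set $\mathcal U$ of ultrafilters. The basic fact used throughout is that $\mathcal X$ is strongly mixing if and only if $P_p f=\int f\,d\mu$ for every $f\in L^2(\mu)$ and every non-principal $p$.

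\emph{Not mixing $\Rightarrow$ not disjoint.} Suppose $\mathcal X$ is not strongly mixing. Then there are $f$ with $\int f=0$ and a non-principal $p$ with $\langle U_T^n f,f\rangle\not\to 0$ along $p$, i.e.\ $P_pf\neq 0$. Let $\mathcal Y$ be the factor of $\mathcal X$ generated by $\{P_p g: g\in L^\infty(\mu)\}$; this is Parreau's factor. I expect to need three checks:
\begin{itemize}
\item the family of functions $P_p g$ is $T$-invariant as a family, since $P_p$ commutes with $U_T$, so it generates a genuine factor;
\item this factor is non-trivial, because $P_p f$ is a non-constant element of it;
\item it lies in $\mathcal G$, which requires that the limit operator computed inside $\mathcal Y$ agrees with $E(\cdot\mid\mathcal Y)\circ P_p$, so that $\mathcal Y$ is generated by its own $P_p$-images.
\end{itemize}
Since $\mathcal X$ and its non-trivial factor $\mathcal Y$ admit the relatively independent (non-product) joining, $\mathcal X$ is not disjoint from $\mathcal Y\in\mathcal G$.

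\emph{Mixing $\Rightarrow$ disjoint.} Let $\mathcal X$ be strongly mixing, $\mathcal Y\in\mathcal G$, and let $\lambda$ be a joining. Write $J:L^2(\mathcal X)\to L^2(\mathcal Y)$ for the associated Markov operator; it intertwines the two actions. It suffices to show $Jf=0$ for every $f$ with $\int f=0$. For a single generator $h=P_pg$,
\[
\langle Jf,P_pg\rangle=\plimgG{p}{n}{\N}\langle Jf,U^n g\rangle=\plimgG{p}{n}{\N}\langle J U^{-n}f, g\rangle=\plimgG{p}{n}{\N}\langle U^{-n}f,J^*g\rangle=0,
\]
where the last equality uses mixing of $\mathcal X$ applied to $U^{-n}f$ (mixing is symmetric in $n\mapsto -n$). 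Hence $Jf$ is orthogonal to every generator.

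\emph{Main obstacle.} The generators do not span $L^2(\mathcal Y)$; only the algebra they generate is dense. So I must also show $Jf\perp \prod_{i=1}^k P_pg_i$. I would first try to pass to self-joinings. The product $\prod_i P_pg_i$ is the $p$-limit of $\prod_i U^n g_i$ only in special cases, so instead I would use the $k$-fold relatively independent self-joining of $\lambda$ over $\mathcal X$. This gives a joining $\lambda_k$ of $\mathcal X$ with $\mathcal Y^{\times k}$, and there the function $g_1\otimes\cdots\otimes g_k$ satisfies $P_p^{\otimes k}(g_1\otimes\cdots\otimes g_k)=\bigotimes_i P_pg_i$ coordinatewise. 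Repeating the computation above in $\lambda_k$ then yields
\[
\int f\otimes\Big(\bigotimes_i P_pg_i\Big)\,d\lambda_k=0 .
\]
I would then restrict back to the diagonal to obtain $\int f\cdot\prod_i P_pg_i\,d\lambda=0$. Whether this restriction is legitimate, or whether one instead needs Parreau's finer argument showing that the algebra generated by $P_p$-images is again controlled by limit operators, is the step I expect to be hardest. Once it is settled, density gives $Jf=0$, so $\lambda=\mu\times\nu$.
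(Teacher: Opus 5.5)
\paragraph{First direction and single generators: fine.} Your first direction is sound, and your third check is easy. $L^2(\mathcal Y)$ is a closed subspace invariant under all $U^n$, so $E=E(\cdot\mid\mathcal Y)$ commutes with $P_p$. Since each generator $P_pg$ is $\mathcal Y$-measurable, $P_pg=EP_pg=P_p(Eg)$ with $Eg\in L^\infty(\mathcal Y)$. This is Lemma \ref{lem:StabilityOfParreauFactors}, and your route to it does not even need normality. Your single-generator computation in the second direction is also correct.

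\paragraph{The gap: products of generators.} The gap is the step you flagged, and the route you propose for it fails, for three reasons.
\begin{enumerate}[(i)]
\item In the relatively independent self-joining $\lambda_k$ of $\lambda$ over $\mathcal X$, one has $\int f\otimes h_1\otimes\cdots\otimes h_k\,d\lambda_k=\int f\prod_iJ^*h_i\,d\mu$. The $Y^k$-marginal of $\lambda_k$ is a self-joining of $\mathcal Y$ which, for $k\ge 2$, equals $\nu^{\otimes k}$ only if $\lambda=\mu\otimes\nu$. The identity $P_p^{\otimes k}(g_1\otimes\cdots\otimes g_k)=\bigotimes_iP_pg_i$ holds for the diagonal action on $(Y^k,\nu^{\otimes k})$, not on this marginal, so invoking it presupposes the conclusion.
\item Concretely, ``repeating the computation'' needs $\prod_iJ^*(P_pg_i)$ to be the weak $p$-limit of $\prod_iU^nJ^*g_i=U^n\big(\prod_iJ^*g_i\big)$. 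Weak limits do not commute with products, so this fails in general.
\item Even granting it, you would get $\int f\prod_iJ^*(P_pg_i)\,d\mu=0$, whereas you need $\int f\,J^*\big(\prod_iP_pg_i\big)\,d\mu=0$. $J^*$ is multiplicative only when $\mathcal Y$ is a factor of $\mathcal X$ through $\lambda$. That is also the only case in which $\lambda_k$ is carried by the diagonal of $Y^k$, so in general there is nothing to restrict to.
\end{enumerate}

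\paragraph{The missing idea.} The fix is Parreau's construction, as in the proof of Theorem \ref{thm:ParreauFactorsAreMultipliers} (where the $\mathcal G$-system is called $\mathcal X$ and the mixing one $\mathcal Z$): take the relatively independent joining over the $\mathcal G$-system, not over the mixing one.
\begin{enumerate}[(a)]
\item Each $P_q$, $q\in G^*$, is a Markov operator commuting with the action, so it comes from a self-joining $\nu_q$ of $\mathcal Y$.
\item Attach copies $y_1,\dots,y_k$ of $\mathcal Y$ to a base copy $y$ via $\nu_{q_1},\dots,\nu_{q_k}$, attach $\mathcal X$ to $y$ via $\lambda$, and join relatively independently over $y$. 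The resulting $\Gamma_k$ satisfies $\int f(x)\Phi(y)\prod_ig_i(y_i)\,d\Gamma_k=\int f(x)\Phi(y)\prod_iP_{q_i}g_i(y)\,d\lambda$. Thus a product of limit-images on one copy of $Y$ becomes a tensor product over separate copies.
\item Induct on $k$, with hypothesis that the marginal of $\Gamma_k$ on $Y^k\times X$ has the form $\gamma'_k\otimes\mu$. For the step, take $\Phi=P_{q_{k+1}}g_{k+1}$ and write it as the $q_{k+1}$-limit of $U^ng_{k+1}$. Move $U^n$, by invariance of $\Gamma_k$, onto $(y_1,\dots,y_k,x)$ as $U^{-n}$. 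Then $U^{-n}\big(\bigotimes_{i\le k}g_i\otimes f\big)\to0$ weakly in $L^2(\gamma'_k\otimes\mu)$, because $U^{-n}f\to0$ weakly by mixing of $\mathcal X$. The step from $k=0$ to $k=1$ is exactly your single-generator computation.
\end{enumerate}
This also handles products with different ultrafilters $q_i$. You need these because $\mathcal G=\mathcal G_{G^*}$, and your single-$p$ tensor argument did not address them. Section \ref{sec:vdC} gives an alternative treatment of the same products via the ultrafilter van der Corput lemma.
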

We will see in Lemma \ref{lem:ParreauFactorsAreClosedUnderJoinings} that the class $\mathcal{G}$ is closed under countable joinings, but Examples \ref{ex:SecondExample}-\ref{ex:FourthExample} will show that it need not be closed under passage to factors, so $\mathcal{G}$ is not a characteristic class.
Nonetheless, it is still the case that $\mathcal{X}$ is strongly mixing if and only if it has no factor in $\mathcal{G}$.
Indeed, we will see in Corollary \ref{cor:ClosedUnderSubfactors} below that 
 $\mathcal G$ is "closed under subfactors", meaning that for any system $\mathcal{X} \in \mathcal{G}$, and any non-trivial factor $\mathcal{Y}$ of $\mathcal{X}$, there is a non-trivial factor $\mathcal{Z}$ of $\mathcal{Y}$ for which $\mathcal{Z} \in \mathcal{G}$.
 As discussed in Remark \ref{rem:QuasiCharacteristicClass}, it follows that the class $\overline{\mathcal{G}}$ of systems that are a factor of some member of $\mathcal{G}$ is a characteristic class.

The classical notion of disjointness is often called Furstenberg disjointness in order to distinguish it from the stronger notion of \textbf{spectral disjointness} (defined in Section \ref{ssec:Spectrum}).
It is worth noting that parts (i)-(iii) of Theorem \ref{thm:ClassicalDichotomies} are still true if the word ``disjoint" is replaced by ``spectrally disjoint".
In particular, the mean ergodic theorem can be reinterpreted as the spectral disjointness of identity systems $\mathcal{I}$ and ergodic systems $\mathcal{E}$, the Jacobs-de Leeuw-Glicksberg decomposition can be reinterpreted as the spectral disjointness of rotations on compact abelian groups $\mathcal{K}$ and weakly mixing systems $\mathcal{W}$, 
and the spectral disjointness of rigid systems $\mathcal{R}$ and mildly mixing systems $\mathcal{M}$ can be deduced from 
\cite[Lemma 1]{Walters1972} (see also \cite[Theorem 0.4]{PolynomialActionsOfUnitaryOperators}).
All systems $\mathcal{X}$ of completely positive entropy have countable Lebesgue spectrum \cite{KMixingImpliesLebesgueSpectrumAmenable}, and many systems of zero entropy also have countable Lebesgue spectrum, so the dichotomy present in Theorem \ref{thm:ClassicalDichotomies} (iv) cannot be expressed through spectral disjointness.
We will see in Examples \ref{ex:FirstExample}-\ref{ex:FourthExample} that systems in $\mathcal{G}$ need not be spectrally disjoint from all strongly mixing systems. 
Nonetheless, there is a strong relation between strong mixing, $\mathcal U$-generated systems, and spectral disjointness, described in Theorem \ref{thm:Disjointness}. 
%\sf{Now we have to bring back the vague sentence to show why this special case is worth distinguishing from Lemanczyks arguably more general result.}
%\jpm{What is Lemanczyk's result here?}
%Nonetheless, one can generate the members of $\mathcal{G}$ via the notion of spectral disjointness.%as those systems $\mathcal{X}$ that are "generated" by the vectors in $L^\infty(X,\mu)$ that are orthogonal to all strongly mixing vectors.

Another important topic in the theory of disjointness is that of multipliers. 
Given a class $\mathscr{D}$ of systems, we let $\mathscr{D}^\perp$ denote the class of systems that are disjoint from all systems in $\mathscr{D}$.
We let $\mathscr{M}(\mathscr{D}^\perp)$ denote the class of systems $\mathcal{X}$, such that for any $\mathcal{Y} \in \mathscr{D}^\perp$, every joining of $\mathcal{X}$ and $\mathcal{Y}$ is again in $\mathscr{D}^\perp$.\footnote{We mention that in many other papers in the literature, the members of $\mathscr{D}^\perp$ and $\mathscr{M}(\mathscr{D}^\perp)$ are required to be ergodic.}
For any class of systems $\mathscr{D}$, we have that $\mathscr{M}(\mathscr{D}^\perp)$ is a characteristic class \cite[Section 2.1]{ArithmeticFunctionsOrthogonalToDeterminism}.

Letting $\mathcal{D}$ denote the class of measurably distal systems, Furstenberg \cite{DisjointnessInErgodicTheory} had shown that $\mathcal{D}\cap\mathcal{E} \subseteq \mathcal{W}^\perp$ and $\mathcal{W} = \mathcal{K}^\perp$.
However, we are far from a complete understanding of $\mathcal{W}^\perp$. 
Glasner and Weiss \cite{MultWPerpBiggerThanDist} showed that $\mathcal{D}\cap\mathcal{E} \subsetneq \mathcal{W}^\perp$. 
Later, Glasner \cite{WPerpAndItsMultipliers} showed that $\mathcal{D}\cap\mathcal{E} \subsetneq \mathscr{M}(\mathcal{W}^\perp)$, then Lema\'nczyk and Parreau \cite{RokhlinExtensionsLiftingDisjointness} showed that $\mathscr{M}(\mathcal{W}^\perp) \subsetneq \mathcal{W}^\perp$.
Recently Berk, G\'orska, and de la Rue \cite{ErgodicPerp} showed that $\mathcal{I} = \mathscr{M}(\mathcal{E}^\perp) \subsetneq \mathcal{E}^\perp$, and a complete characterization of $\mathcal{E}^\perp$ was obtained by G\'orska, Lema\'nczyk, and de la Rue \cite{UniquelyErgodicPerp}.

In this paper we establish a general result, \cref{thm:ParreauFactorsAreMultipliers} below, which contains as special cases the facts that $\mathcal{G} \subseteq \mathscr{M}(\mathcal{S}^\perp)$ and $\mathcal{R} \subseteq\mathscr{M}(\mathcal{M}^\perp)$ (which were originally obtained by Lema\'nczyk ), as well as the classical fact that $\mathcal{K} \subseteq \mathscr{M}(\mathcal{W}^\perp)$.
%In the present paper, we show in Theorem \ref{thm:ParreauFactorsAreMultipliers} that $\mathcal{G} \subseteq \mathscr{M}(\mathcal{S}^\perp)$.
%Theorem \ref{thm:ParreauFactorsAreMultipliers} is general enough that it also shows that $\mathcal{R} \subseteq\mathscr{M}(\mathcal{M}^\perp)$, where $\mathcal{M}$ is the class of mildly mixing systems, and it gives a new proof of the classical fact that $\mathcal{K} \subseteq \mathscr{M}(\mathcal{W}^\perp)$.
% \rsout{We also show in Theorem \ref{SpectralDisjointnessImpliesMultiplier} that in many cases of interest, spectral disjointness implies the multiplier property, which gives a second proof of the fact that $\mathcal{R} \subseteq \mathscr{M}(\mathcal{M}^\perp)$.
% Theorem \ref{SpectralDisjointnessImpliesMultiplier} also allows us to see that the class of systems of singular spectrum is contained in $\mathscr{M}(\mathcal{A}^\perp)$, where $\mathcal{A}$ is the class of systems with absolutely continuous spectrum.
% We also show that any partially rigid system is a finite extension of some member of $\mathcal{G}$ (see Theorem \ref{thm:PartiallyRigidFiniteFibers}), from which we deduce that partially rigid systems, such as substitution systems and interval exchange transformations (IET), are also members of $\mathscr{M}(\mathcal{S}^\perp)$.}
%%%%%%%%%%%%%%%%%%%%%%%%%%%%%%%%%%%%%%%%
\subsection{A unified approach to many ergodic-theoretical dichotomies}
%\jpm{Add in each theorem/lemma that was known before the relevant reference}
A goal of this paper is to, using the language of limits along ultrafilters and properties of normal operators, present a unified approach to several structural results about measure preserving actions of countably infinite abelian groups.
In particular, we prove a variant of   \cref{thm:ClassicalDichotomies} dealing with measure-preserving actions of countably infinite abelian groups, which has as corollaries
\cref{thm:ClassicalDichotomies}(ii)-(iii) and Theorem \ref{thm:ParreauFactorForZ}. 
Before stating this variant (see Theorem \ref{thm:Disjointness} below), we need to introduce various definitions and notation.\\

Let $(G,+)$ be a countably infinite abelian group which we tacitly assume to be endowed with the discrete topology. 
Recall that one can view $\beta G$, the Stone-{\v C}ech compactification of $G$, as  a compact Hausdorff space having $G$ as a dense subset. 
We set $G^*:=\beta G\setminus G$. 
Consider a Hilbert space $\mathcal H$ and a unitary representation $(U^g)_{g\in G}$ of $G$ on $\mathcal H$. 
By the universal property of $\beta G$, for every $p\in\beta G$, there exists a unique bounded operator $U^p:\mathcal H\rightarrow\mathcal H$ with the property that for any $f,f'\in\mathcal H$, the unique continuous extension of the map $g\mapsto \langle f,U^g f'\rangle$ from  $G$ to $\beta G$ is given by $p\mapsto \langle f, U^pf'\rangle$. 
We remark in passing that, while the use of convergence along ultrafilters in this paper could be replaced by other convergence schemes (see \cite[Section 9.2]{FBook}, \cite[Section 6]{AlmostMixingOfAllOrders}, for example), the ultrafilter formalism provides a canonical way to avoid cumbersome notation and connect the algebraic properties of $G$ with those of the semigroup formed by the weak accumulation points of the sequence of operators $(U^g)_{g\in G}$ (see Subsection \ref{sssec:UltrafiltersAndUnitaryReps}).
See Section \ref{ssec:Ultrafilters} for a more detailed discussion of ultrafilters and the algebra of $\beta G$.

Given a measure-preserving system $\mathcal X=(X,\mathscr{B},\mu,(T^g)_{g\in G})$,  we will let $\mathscr{B}_\mathcal U$ denote the smallest sub-$\sigma$-algebra of $\mathscr{B}$ with the property that for every $p\in\mathcal U$ and every $f\in L^2(X,\mu)$, $T^pf$ is $\mathscr{B}_\mathcal U$-measurable. 
One can show that the quadruple $\mathcal X^\mathcal U=(X,\mathscr{B}_\mathcal U,\mu,(T^g)_{g\in G})$ is an invertible measure-preserving system which we call the $\mathcal U$-generated factor of $\mathcal X$ (see Lemma \ref{lem:StabilityOfParreauFactors} below).
 We note that $\mathcal{U}$-generated factors are also related to spectral disjointness.
For any system $\mathcal{X}$ and any $\mathcal{U} \subseteq G^*$, we have the Hilbert space decomposition 

  $$L^2(X,\mu) = \underbrace{\overline{\text{Span}_{\mathbb{C}}\left(\bigcup_{p \in \mathcal{U}}\text{Im}(T^p)\right)}}_{\mathcal{U}-\text{generated}}\oplus\underbrace{\bigcap_{p \in \mathcal{U}}\text{Ker}(T^p)}_{\mathcal{U}-\text{mixing}}.$$
  It turns out that the $\mathcal{U}$-generated factor of $\mathcal{X}$ corresponds to the smallest $\sigma$-algebra with respect to which every function in the $\mathcal{U}$-generated part of $L^2(X,\mu)$ is measurable.

We are now in position to formulate the variant of \cref{thm:ClassicalDichotomies} mentioned above.
\begin{theorem}\label{thm:Disjointness}
    Let $(G,+)$ be a countable abelian group, let $(X,\mathscr{B},\mu,(T^g)_{g\in G})$ be a measure-preserving system, and let $\mathcal U\subseteq G^*$ be non-empty.   The following statements are equivalent:
    \begin{enumerate}[(i)]
    \item $(L_0^2(X,\mu),(T^g)_{g \in G})$ is spectrally disjoint from every pair $(\mathcal H,(U^g)_{g\in G})$
    satisfying 
    $$
\mathcal H=\overline{\text{Span}_{\mathbb C}\left(\bigcup_{p\in\mathcal U} \text{Im}(U^p)\right)}.
    $$
    Here $L_0^2(X,\mu):=\{f\in L^2(X,\mu)\,|\,\int_Xf\text{d}\mu=0\}$.
     \item $(X,\mathscr{B},\mu,(T^g)_{g\in G})$ is Furstenberg disjoint from every $\mathcal U$-generated system. 
      \item $(X,\mathscr{B},\mu,(T^g)_{g\in G})$ is $\mathcal U$-mixing, meaning that for every $A,B\in\mathcal A$ and every $p\in\mathcal U$, $\mu(A\cap T^pB)=\mu(A)\mu(B)$.
    \end{enumerate}
\end{theorem}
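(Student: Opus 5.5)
We prove (i)$\Rightarrow$(iii)$\Rightarrow$(ii)$\Rightarrow$(i), working throughout on the Hilbert-space side via the decomposition
$$L^2(X,\mu)=\overline{\text{Span}_{\mathbb C}\Big(\bigcup_{p\in\mathcal U}\text{Im}(T^p)\Big)}\oplus\bigcap_{p\in\mathcal U}\text{Ker}(T^p).$$
We use that each $T^p$ is a weak limit of the unitaries $T^g$, hence commutes with them and is normal (Subsection \ref{sssec:UltrafiltersAndUnitaryReps}). Since $T^p$ is normal, $\text{Ker}(T^p)=\text{Im}(T^p)^\perp$, which is why this decomposition holds.

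\textbf{(i)$\Rightarrow$(iii).} Let $\mathcal H_{\mathcal U}$ be the $\mathcal U$-generated part of $L_0^2(X,\mu)$. It is $T^g$-invariant, and it equals the closed span of $\bigcup_{p\in\mathcal U}\text{Im}(T^p|_{\mathcal H_{\mathcal U}})$, because each $T^p$ preserves both summands. So by (i) it is spectrally disjoint from $L_0^2$ itself. Since it is a subrepresentation of $L_0^2$, this forces $\mathcal H_{\mathcal U}=0$. Hence $T^pf=0$ for all $f\in L^2_0$ and $p\in\mathcal U$, i.e. $\langle 1_A,T^p1_B\rangle=\mu(A)\mu(B)$, which is (iii).

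\textbf{(iii)$\Rightarrow$(ii).} Let $\mathcal Y$ be $\mathcal U$-generated and $\lambda$ a joining of $\mathcal X$ and $\mathcal Y$. Let $P:L^2(Y)\to L^2(X)$ be the Markov operator of $\lambda$, i.e. $\langle f\otimes g\rangle_\lambda=\langle f,Pg\rangle$. Then $P$ intertwines the $G$-actions, hence intertwines the $p$-limits: $PS^p=T^pP$. By (iii), $T^p$ maps $L^2(X)$ into the constants, so for $g\in L^2(Y)$ and $h=S^pg$,
$$Ph=T^pPg=\Big(\int Pg\Big)\cdot 1=\Big(\int g\Big)\cdot 1=\Big(\int h\Big)\cdot 1,$$
where the last equality uses that $S^p$ preserves integrals. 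Thus $P$ acts as $h\mapsto(\int h)\cdot 1$ on $\bigcup_p\text{Im}(S^p)$, and by linearity and continuity on its closed span, which we call $V$. The set of bounded $h$ with $Ph=\int h$ is closed under products? Not in general. Instead we use that $\mathscr B_{\mathcal U}$ is generated by $V$: by definition $\mathscr{B}_\mathcal U$ is the smallest $\sigma$-algebra making every $S^pf$ measurable, and $\mathscr B_{\mathcal U}$ is the whole $\sigma$-algebra of $\mathcal Y$. So it suffices to show that $\lambda$ restricted to $X\times(\text{the }\sigma\text{-algebra generated by }V)$ is product measure.

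This is the main obstacle, since $\lambda$ being a product on $L^2(X)\otimes V$ does not immediately give a product on the $\sigma$-algebra generated by $V$. I would handle it by showing that the set of $h\in L^\infty(Y)$ with $Ph=\int h$ is a closed subalgebra containing $V\cap L^\infty$. The route is to apply the same argument to the joining $\lambda$ relative to functions $f\cdot h_1$: for $h_1,h_2\in V$, note that $h_1h_2$ lies in the image algebra because $S^p$ is multiplicative-like when restricted to $p\in G^*$ products. Alternatively, and more simply, pass to the self-joining structure: consider $\mathcal X\times\mathcal Y$ under $\lambda$ and its $\mathcal U$-generated factor. Lemma \ref{lem:StabilityOfParreauFactors}-type statements identify it as containing $\mathcal Y$, while (iii) together with the computation above makes the $\mathcal U$-generated part of $L^2(\lambda)$ orthogonal to $L^2_0(X)$. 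A factor-level version of this orthogonality, in which the conditional expectation onto the $\mathcal U$-generated $\sigma$-algebra of $L_0^2(X)$ vanishes, gives independence of $\mathscr B_X$ and $\mathscr B_Y$ under $\lambda$.

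\textbf{(ii)$\Rightarrow$(i).} Suppose (i) fails. Then there is a nonzero $f\in L_0^2(X)$ whose spectral measure is not singular to the maximal spectral type of some $\mathcal H$ as in (i). The span condition on $\mathcal H$ forces that type to be carried by measures $\sigma$ with $\hat\sigma(p)\neq0$ for some $p\in\mathcal U$, so we may arrange $T^pf\neq0$ for some $p\in\mathcal U$. Then the $\mathcal U$-generated factor $\mathcal X^{\mathcal U}$ (Lemma \ref{lem:StabilityOfParreauFactors}) is nontrivial and $\mathcal U$-generated, and $\mathcal X$ is not disjoint from its own nontrivial factor. This contradicts (ii).
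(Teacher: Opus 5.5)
Your steps (i)$\Rightarrow$(iii) and (ii)$\Rightarrow$(i) are fine in substance. They are what the paper obtains from Corollary~\ref{cor:HilbertianUmixingChar} and Remark~\ref{rem:UMixingIFFNoUParreauFactor}. In (ii)$\Rightarrow$(i) the criterion ``$\hat\sigma(p)\neq 0$'' is imprecise. It is cleaner to take a nonzero intertwiner $J:\mathcal H\to L^2_0(X,\mu)$ (taking an adjoint if necessary), pick $p\in\mathcal U$ and $\xi\in\mathcal H$ with $JU^p\xi\neq 0$, and note that $T^p(J\xi)=JU^p\xi\neq 0$.

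The genuine gap is in (iii)$\Rightarrow$(ii), which is the actual content of the theorem. Your computation shows only that $\int k(x)\,S^ph(y)\,d\lambda=0$ whenever $\int k\,d\mu=0$. In other words, $\lambda$ agrees with $\mu\otimes\nu$ on $L^2(X,\mu)\otimes V$, where $V$ is the closed \textit{linear} span of $\bigcup_{p\in\mathcal U}\text{Im}(S^p)$. But $L^2(Y,\nu)$ is the $L^2$-space of the $\sigma$-algebra generated by $V$, which is typically much larger than $V$. Neither of your proposed fixes bridges this.
\begin{itemize}
\item The first fix relies on products of $S^p$-images staying in an ``image algebra'', which is false. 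In Example~\ref{ex:SecondExample} one has $e^{2\pi iy}\in\text{Im}(T^p)$, while $e^{4\pi iy}=(e^{2\pi iy})^2\in\text{Ker}(T^p)=\text{Im}(T^p)^\perp$ (cf.\ Remark~\ref{rem:PFactorBasics}). The operator $S^p$ is simply not multiplicative.
\item The second fix is circular. Orthogonality of $L^2_0(X)\otimes 1$ to the $\mathcal U$-generated \textit{linear} part of $L^2(\lambda)$ does not pass to $L^2$ of the $\mathcal U$-generated $\sigma$-algebra of $(X\times Y,\lambda)$, for the same reason. The ``factor-level version'' you invoke is precisely the disjointness you are trying to prove.
\end{itemize}

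What is missing is control of arbitrary finite products. You need to show, by induction on $n$, that $\int k(x)\prod_{j=1}^nS^{p_j}h_j(y)\,d\lambda(x,y)=0$ for all $p_1,\dots,p_n\in\mathcal U$, all $h_1,\dots,h_n\in L^\infty(Y,\nu)$, and all $k\in L^\infty(X,\mu)$ with $\int k\,d\mu=0$. These products span a dense subspace of $L^2(Y,\nu)$ because $\mathcal Y$ is $\mathcal U$-generated, and your computation is only the case $n=1$.

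In the inductive step, the $\mathcal U$-mixing of $\mathcal X$ must be used inside an auxiliary joining, not on $L^2(X,\mu)$ alone. The paper does this in the proof of Theorem~\ref{thm:ParreauFactorsAreMultipliers}, with the system from $\{\mathcal Z\}^\perp$ taken trivial; here it is translated into your notation.
\begin{itemize}
\item Form the relatively independent joining $\gamma_{\vec p}$ over a base copy of $\mathcal Y$. Attach $n$ further copies of $\mathcal Y$ through the self-joinings whose Markov operators are $S^{p_1},\dots,S^{p_n}$, and attach $\mathcal X$ through $\lambda$.
\item The induction hypothesis implies that the projection of $\gamma_{\vec p}$ forgetting the base has the form $\gamma'\otimes\mu$.
\item Rewrite $\int k\prod_{j=1}^{n+1}S^{p_j}h_j\,d\lambda$ as $\int S^{p_{n+1}}h_{n+1}(y_0)\prod_{j=1}^nh_j(y_j)\,k(x)\,d\gamma_{\vec p}$.
\item Use the invariance of $\gamma_{\vec p}$ to move $p_{n+1}$ onto the remaining coordinates. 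The resulting weak limit in $L^2(\gamma'\otimes\mu)$ vanishes because $T^{-p_{n+1}}k=0$.
\end{itemize}
Lemma~\ref{lem:IndependentJoiningsViaVdC} gives an alternative induction via the van der Corput trick. Without an argument of this kind, your proof of (iii)$\Rightarrow$(ii) covers only the linear span $V$, not the system $\mathcal Y$.
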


\begin{remark}
It is worth pointing out that the implication  (i)$\implies$(ii) does not admit the following variant: If  $(L_0^2(X,\mu),(T^g)_{g \in G})$ is spectrally disjoint from every pair $(\mathcal H,(U^g)_{g\in G})$ having singular spectrum, then $(X,\mathscr{B},\mu,(T^g)_{g\in G})$ is Furstenberg disjoint from every system whose $\sigma$-algebra is generated by its vectors with singular spectrum.
To see this, consider a probability measure $\sigma$ on $[0,1]$ that is singular to Lebesgue, but for which $\sigma*\sigma$ is in the same measure class as Lebesgue.
If $\mathcal{X}$ is the Gaussian system constructed from $\sigma$, then the even factor $\mathcal{X}_e$ of $\mathcal{X}$ has countable Lebesgue spectrum, and $\mathcal{X}$ is a 2-point extension of $\mathcal{X}_e$.
Furthermore, there exists $f \in L^2(X,\mu)$ whose spectral measure is singular, and for which the smallest $T$-invariant $\sigma$-algebra with respect to which $f$ is measurable is all of $\mathscr{B}$.
We see that $\mathcal{X}_e$ is spectrally disjoint from every system with singular spectrum, but it is not disjoint from $\mathcal{X}$ even though $\mathscr{B}$ is generated by a single function with singular spectrum.
\end{remark}

\begin{corollary}\label{cor:ClosedUnderSubfactors}
    For any non-trivial $\mathcal U$-generated system $\mathcal X$ and any non-trivial factor $\mathcal Y$ of $\mathcal X$, one has that $\mathcal Z:=\mathcal Y^\mathcal U$ (which is itself a $\mathcal U$-generated system) is  a non-trivial factor of $\mathcal Y$.
\end{corollary}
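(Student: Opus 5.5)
Since the corollary follows Theorem \ref{thm:Disjointness}, the natural plan is to derive it from the equivalence (ii)$\iff$(iii) of that theorem, applied to the factor $\mathcal Y$. We argue by contradiction: suppose $\mathcal Z=\mathcal Y^{\mathcal U}$ is trivial, i.e.\ the $\sigma$-algebra $\mathscr{B}_{\mathcal U}$ of $\mathcal Y$ is trivial mod null sets. By the Hilbert space decomposition
$$L^2(Y,\nu)=\overline{\text{Span}_{\mathbb C}\Big(\bigcup_{p\in\mathcal U}\text{Im}(T^p)\Big)}\oplus\bigcap_{p\in\mathcal U}\text{Ker}(T^p),$$
every function in the $\mathcal U$-generated summand is $\mathscr{B}_{\mathcal U}$-measurable, hence constant. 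For $f\in L^2_0(Y,\nu)$ and $p\in\mathcal U$, the vector $T^pf$ is therefore constant, and $\int T^pf\,d\nu=\int f\,d\nu=0$, because integration against $1$ passes to the weak limit. Hence $T^pf=0$ for all $f\in L^2_0$ and $p\in\mathcal U$. Taking $f=1_B-\nu(B)$ yields $\nu(A\cap T^pB)=\nu(A)\nu(B)$ for all $A,B$ and $p\in\mathcal U$, so $\mathcal Y$ is $\mathcal U$-mixing.

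By Theorem \ref{thm:Disjointness} ((iii)$\Rightarrow$(ii)), $\mathcal Y$ is then Furstenberg disjoint from every $\mathcal U$-generated system, in particular from $\mathcal X$. However, $\mathcal Y$ is a factor of $\mathcal X$, so the graph joining of $\mathcal X$ and $\mathcal Y$ (the pushforward of $\mu$ under $x\mapsto(x,\pi(x))$) is a joining. Because $\mathcal Y$ is non-trivial, this joining is not the product measure: for a set $B$ with $0<\nu(B)<1$, the joining assigns $\nu(B)$ to $\pi^{-1}B\times B$, whereas the product assigns $\nu(B)^2$. This contradicts disjointness, so $\mathcal Z$ is non-trivial. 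The factor map may be taken to respect the $G$-action, so no issue arises there.

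It remains to justify the parenthetical claim that $\mathcal Z$ is $\mathcal U$-generated and is a factor of $\mathcal Y$. That $\mathcal Y^{\mathcal U}$ is a factor (an invariant sub-$\sigma$-algebra) follows from Lemma \ref{lem:StabilityOfParreauFactors}, which I take to also give idempotence, $(\mathcal Y^{\mathcal U})^{\mathcal U}=\mathcal Y^{\mathcal U}$. The main point needing care there is showing that $T^p$ computed in the factor agrees with the restriction of $T^p$ on $\mathcal Y$. This holds because the conditional expectation onto an invariant sub-$\sigma$-algebra commutes with each $T^g$, and hence with the weak limits $T^p$. I expect this compatibility step to be the only real obstacle. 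The non-triviality argument above is routine given Theorem \ref{thm:Disjointness}.
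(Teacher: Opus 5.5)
Your argument is correct and is essentially the paper's proof. The paper likewise notes that a non-trivial factor $\mathcal Y$ of $\mathcal X$ is not disjoint from $\mathcal X$, so the contrapositive of (iii)$\implies$(ii) in Theorem~\ref{thm:Disjointness} shows $\mathcal Y$ is not $\mathcal U$-mixing, hence $\mathcal Y^{\mathcal U}$ is non-trivial; you merely spell out the step ``$\mathcal Y^{\mathcal U}$ trivial $\Rightarrow$ $\mathcal Y$ is $\mathcal U$-mixing'' (which the paper takes from Remark~\ref{rem:UMixingIFFNoUParreauFactor}) and the compatibility of $T^p$ with conditional expectation, and both are handled correctly.
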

\begin{proof}[Proof of Corollary \ref{cor:ClosedUnderSubfactors}]
Because $\mathcal Y$ is not Furstenberg disjoint from $\mathcal X$, the contrapositive of (iii)$\implies$(ii) in Theorem \ref{thm:Disjointness} implies that $\mathcal Y$ is not $\mathcal U$-mixing and so,  $\mathcal Y^\mathcal U$ is non-trivial.
\end{proof}
As we discuss in Section \ref{sec:SpectralMixing}, one can pick $\mathcal U\subseteq G^*$ so that the notion of $\mathcal U$-mixing coincides with any of the classical notions of weak mixing, mild mixing, or strong mixing. In particular, by choosing $\mathcal U$ appropriately, one can employ Theorem \ref{thm:Disjointness} to obtain the corresponding extensions of items (i), (ii) in Theorem \ref{thm:ClassicalDichotomies}  and Theorem \ref{thm:ParreauFactorForZ} to arbitrary countably infinite abelian groups.

  We remark that Theorem \ref{thm:Disjointness} is an immediate consequence of various results which we obtain throughout this text. That (ii)$\implies$(i) holds can be deduced from  Remark \ref{rem:UMixingIFFNoUParreauFactor}. The implication (i)$\implies$(iii) follows from Corollary \ref{cor:HilbertianUmixingChar}. That (iii)$\implies$(ii) is a consequence of Theorem \ref{thm:ParreauFactorsAreMultipliers}. Moreover,  we provide two different approaches to the proof of  Theorem \ref{thm:ParreauFactorsAreMultipliers}. The first of these approaches is based on Parreau's original proof of  Theorem \ref{thm:ParreauFactorForZ} and it relies on the concept of relatively independent joinings. Our second approach still employs some of the main ideas presented in \cite{ParreauFactor} but avoids the use of relatively independent joinings by employing a variant of the van der Corput trick introduced in \cite{StronglyMixingPET}.
  This second approach may provide a useful approach to extend Theorem \ref{thm:ParreauFactorsAreMultipliers} to the context of noncommutative groups.

The structure of the paper is as follows.
In Section \ref{sec:Preliminaries} we collect relevant facts about measure-preserving systems, joinings and disjointness, spectrum and spectral disjointness, the Image-Kernel decomposition of a normal operator on a Hilbert space, the algebra of the Stone-\v{C}ech compactification, and ultrafilter limits of operators.
In Section \ref{sec:SpectralMixing} we define $\mathcal{U}$-mixing for a family of ultrafilters $\mathcal{U}$, discuss how this recovers the classical notions of weak, mild, and strong mixing, and classify which systems are spectrally disjoint from all $\mathcal{U}$-mixing systems.
In Section \ref{sec:ParreauFactors} we define $\mathcal U$-generated and $\mathcal U$-generated factors, prove some of their basic properties, and provide many examples.
In Section \ref{sec:PartialRigidity} we study partially rigid systems and show that they are always a finite extension of their $\mathcal U$-generated factor.
In Section \ref{sec:MultipliersMainResults} we study the multiplier property and prove Corollaries \ref{cor:SingularSpectrumMultipliers} and \ref{cor:MainMultiplierResult}, which are two of the main results of this article.
In Section \ref{sec:vdC} we give an alternative proof of Theorem \ref{thm:ParreauFactorsAreMultipliers} using the van der Corput difference trick.
In Section \ref{sec:Questions} we state some open questions and conjectures.\\

\subsection{Previous work}

After an initial version of the present paper was shared with various experts in the field, it was brought to our attention that Parreau's work \cite{ParreauFactor} and many of its generalizations and consequences were announced and promoted in various talks in the mid 2000's (see \cite{Lemanczyk_Parreau}, for example). 
%The previous works building upon the work of Parreau include results due to   Lema\'nczyk \cite{Lemanczyk_IET,Lemanczyk_Parreau} and Tehungue \cite{Tehungue2024Parreau}. 
Many of the results proved in this paper were already known at that time and include results originally proved by Lema\'nczyk \cite{Lemanczyk_IET,Lemanczyk_Parreau} and Tehungue \cite{Tehungue2024Parreau}. 
For instance, they showed that the construction of Parreau yields systems in $\mathscr{M}(\mathcal{S}^\perp)$, and Theorem \ref{SpectralDisjointnessImpliesMultiplier} appears in \cite{Lemanczyk_IET,Lemanczyk_Parreau}.

Lema\'nczyk defines the \textbf{Parreau factor} of a system $\mathcal{X}$ to be the largest factor contained in $\mathscr{M}(\mathcal{S}^\perp)$, and in \cite{Lemanczyk_IET} shows that an ergodic IET is its own Parreau factor, hence a member of $\mathscr{M}(\mathcal{S}^\perp)$. 
The proof given there easily generalizes to any ergodic partially rigid system (cf. \cref{cor:MainMultiplierResult} part (iii)).
Furthermore, in \cite{Lemanczyk_IET,Lemanczyk_Parreau} it is shown that Parreau's construction can be generalized to recover the duality between identity and ergodic systems, rotations on compact abelian groups and weakly mixing systems, as well as rigid and mildly mixing systems.
Furthermore, it yields a class of systems that are related to singular spectrum systems and are members of $\mathscr{M}(\mathcal{A}^\perp)$ (see also \cite[Page 17]{Lemanczyk2009Spectral}).
The Parreau factor always contains the $\mathcal U$-generated factor of a given system (in view of \cref{cor:MainMultiplierResult}), but in some cases it is strictly larger: indeed, any nilsystem is its own Parreau factor, but its $\mathcal U$-generated factor is the Kronecker factor (cf. \cref{ex:Nilsystems}).

While many of the results in this paper were known before, we hope to bring renewed attention to this interesting topic and provide an accessible and detailed presentation.\\

\textbf{Acknowledgments:} %Firstly, we would like to thank Fran\c{c}ois Parreau for writing \cite{ParreauFactor} when asked for a proof of \cite[Theorem 11]{Lemanczyk2009Spectral}, which is the basis for the current paper.
We would like to thank Terry Adams, Tanja Eisner, Thierry de la Rue, Valery Ryzhikov, and Joanna Ku\l aga-Przymus for helpful discussions about mixing.
We would like to thank Jon Chaika for helpful conversations about interval exchange transformations and Mariusz Lema\'nczyk for bringing to our attention and making publicly available the notes \cite{Lemanczyk_IET,Lemanczyk_Parreau}.  \\

\textbf{Funding acknowledgments:} JM and RZ are supported by EPSRC through Joel Moreira's Frontier Research Guarantee grant, ref. EP/Y014030/1. Additionally, SF was supported by this same grant during his visit to the University of Warwick during July of 2025, when a significant portion of this research was carried out.

%\textbf{AI Statement:} We would like to thank various AI models for their contributions to this paper.
%Firstly, Claude was used during the Perspectives in Ergodic Theory Conference at IMPAN during June of 2025 to obtain an English translation of \cite{ParreauFactor}. 
%We would like to thank ChatGPT for help finding many of the references that we have cited.
%We would also like to thank ChatGPT and DeepSeek for many helpful conversations about the mathematical content of this paper that led to improvements of some of the results.
%We acknowledge that AI frequently hallucinates and makes mistakes, so we have checked all of the given information for correctness before using it, and we take full responsibility for the contents of this paper.
%%%%%%%%%%%%%%%%%%%%%%%%%%%%%%%%%%%%%%%%%%%%%%%%%%%%%%%%%%%%%%%%%%%%%%%%%%%%%%%%%%%%%%%%%%%%%%%%%%%%%%%%%%%%%%
\section{Preliminaries}\label{sec:Preliminaries}
\subsection{Measure-Preserving Systems and the Ergodic Decomposition}\label{ssec:MPS&ErgodicDecomposition}
Throughout this article, we fix a countably infinite abelian group $G$.
A \textbf{measure-preserving $G$-system (m.p.s.)} is a tuple $(X,\mathscr{B},\mu,(T^g)_{g \in G})$ in which $(X,\mathscr{B},\mu)$ is a standard probability space and $T$ is a measure-preserving action of $G$.
We will always use $\mathcal{X}, \mathcal{Y}$, and $\mathcal{Z}$ to denote the $G$-systems $(X,\mathscr{B},\mu,(T^g)_{g \in G}), (Y,\mathscr{A},\nu,(S^g)_{g \in G}),$ and $(Z,\mathscr{C},\rho,(R^g)_{g \in G})$ respectively. 
%The $G$-system $\mathcal{X}$ is \textbf{ergodic} if the only $A \in \mathscr{B}$ for which $\mu(A\triangle T^gA) = 0$ for all $g \in G$ satisfy $\mu(A) \in \{0,1\}$.
%The $G$-system $\mathcal{X}$ is \textbf{weakly mixing} if the product system $\mathcal{X}\times\mathcal{X}$ is ergodic. 
%We will see more of the ergodic hierarchy of mixing properties in Section \ref{ssec:MixingAndQuasiInvariance}.

The $G$-system $\mathcal{Y}$ is a \textbf{factor} of $\mathcal{X}$ if there exists a measurable map $\pi:X\rightarrow Y$ satisfying $\pi_*\mu = \nu$ and $\pi\circ T^g = S^g\circ\pi$ for all $g \in G$.
In this case, $\pi^{-1}(\mathscr{A})=\{\pi^{-1}A:A\in\mathscr{A}\}$ is a $T$-invariant sub-$\sigma$-algebra of $\mathscr{B}$.
In fact, every $T$-invariant sub-$\sigma$-algebra of $\mathscr{B}$ corresponds to a factor of $\mathcal{X}$ in this manner, so we also refer to $T$-invariant sub-$\sigma$-algebras of $\mathscr{B}$ as factors. 
If $\mathcal{X}'$ is a factor of $\mathcal{X}$ corresponding to the sub-$\sigma$-algebra $\mathscr{B}'$, then we will write $L^2(X,\mu)$ for $L^2(X,\mathscr{B},\mu)$, and we will write $L^2(X,\mathscr{B}',\mu)$ when we want to discuss $L^2$ of the factor $\mathcal{X}'$.

Given a $G$-system $\XX$, the \emph{invariant factor} is the $\sigma$-algebra consisting of those sets $A\in \mathscr{B}$ satisfying $T^{-g}A=A$ for every $g\in G$.
When every set in the invariant factor has measure either $0$ or $1$, we say that the system is \emph{ergodic}.

If $\mathcal{Y}$ is a factor of $\mathcal{X}$, then $\mathcal{X}$ is an \textbf{extension} of $\mathcal{Y}$. 
It is a classical theorem of Rohlin (see \cite[Theorem 3.18]{ErgodicTheoryViaJoinings}) that if $\mathcal{X}$ is ergodic and an extension of $\mathcal{Y}$, then $\mathcal{X}$ is %a \textbf{skew product extension} of $\mathcal{Y}$ in the sense that $\mathcal{X}$ is 
measurably isomorphic to the skew product system $(Y\times Z,\mathscr{A}\otimes\mathscr{C},\nu\otimes\rho,((S\rtimes \phi)^g)_{g \in G})$, where $(S\rtimes \phi)^g(y,z) = (S^gy,\phi(g,y)z)$, where $\phi:G\times Y\rightarrow\text{Aut}(Z,\rho)$ satisfies the cocycle equation $\phi(g_1+g_2,y) = \phi(g_1,g_2y)\phi(g_2,y)$.
If $Z$ is a second countable compact group, $\rho$ is the normalized Haar measure $m_Z$, and $\phi$ takes values in the set of left translations on $Z$, then $\mathcal{X}$ is a \textbf{compact group extension} of $\mathcal{Y}$. 

If $\pi:\XX\to\YY$ is a factor map, the \emph{disintegration of $\mu$ with respect to $\pi$} is a measurable assignment $y\mapsto\mu_y$ to each $y\in Y$ of a probability measure $\mu_y$ on $(X,\mathscr{B})$ such that $\mu=\int_Y\mu_y\d\nu(y)$ and $\mu_y(\pi^{-1}(\{y\}))=1$ for $\nu$-a.e. $y\in Y$. 
We see that if $\pi:\mathcal{X}\rightarrow\mathcal{Y}$ is a factor map, then the map $\pi':X\rightarrow Y\times X$ given by $\pi'(x) = (\pi(x),x)$ gives a measurable isomorphism between $\mathcal{X}$ and the system $(Y\times X,\mathscr{A}\otimes\mathscr{B},\nu\rtimes\mu_y,((S\times T)^g)_{g \in G})$, where $(\mu_y)_{y \in Y}$ is the disintegration of $\mu$ with respect to $\pi$ and $(\nu\rtimes\mu_y)(A\times B) = \int_A\mu_y(B)d\nu(y)$.
If there is some $n \in \mathbb{N}$ for which $\mu_y$ is an atomic measure with at most $n$ atoms for a.e. $y$, then $\mathcal{X}$ is a \textbf{finite extension} of $\mathcal{Y}$.

If $\YY$ is the invariant factor of $\XX$, we say that the disintegration $(\mu_y)_{y\in Y}$ is the \emph{ergodic decomposition} of $\XX$.
In this case, $(X,\mathscr{B},\mu_y,T)$ is an ergodic measure-preserving system for $\nu$-almost every $y\in Y$.
We call the measures $\mu_y$ the \emph{ergodic components} of $\mu$, and we let $\mathcal{X}_y := (X,\mathscr{B},\mu_y,T)$.
When working with the ergodic decomposition, we will usually denote the invariant factor of the system at hand by $(\Omega,\mathscr{F},\mathbb{P},\text{Id})$, as we will mostly be concerned with the structure of the invariant factor as a probability space rather than as a measure-preserving system.
If each ergodic component of $\mathcal{X}$ is isomorphic to a compact group extension of an ergodic component of $\mathcal{Y}$, then $\mathcal{X}$ is a \textbf{compact extension} of $\mathcal{Y}$.\footnote{Our definition of compact extension is equivalent to that of Furstenberg \cite[Chapter 6]{FBook}, and it is equivalent to the isometric extensions discussed in \cite[Chapter 9]{ErgodicTheoryViaJoinings}.}

%%%%%%%%%%%%%%%%%%%%%%%%%%%%%%%%%%%%%%%%%%%%%%%%%%%%%%%%%%%%%%%%%%%%%%%%%%%%%%%%%%%%%%%%%%%%%%%%%%%%%%%%%
\subsection{Joinings and Disjointness}\label{ssec:JoiningsAndDisjointness}
A \textbf{joining} of the systems $\mathcal{X}$ and $\mathcal{Y}$ is a $T\times S$ invariant probability measure $\lambda$ on $X\times Y$ whose marginals onto $X$ and $Y$ are $\mu$ and $\nu$ respectively. 
By abuse of terminology, we also sometimes call the system $\mathcal{X}\vee\mathcal{Y} := (X\times Y,\mathscr{B}\otimes\mathscr{A},\lambda,(T^g\times S^g)_{g \in G})$ a joining.
Note that a joining of two systems is a common extension, with factor maps given by the coordinate projection maps $\pi_X:X\times Y\to X$ and $\pi_Y:X\times Y\to Y$. 
We denote by $J(\mathcal{X},\mathcal{Y}) \subseteq \mathcal{M}(X\times Y)$ the set of all joinings of $\mathcal{X}$ and $\mathcal{Y}$, and we let $J^e(\mathcal{X},\mathcal{Y})$ denote the set of extreme points of $J(\mathcal{X},\mathcal{Y})$.
If $\mathcal{X}$ and $\mathcal{Y}$ are both ergodic, then $J^e(\mathcal{X},\mathcal{Y})$ coincides with the set of ergodic joinings of $\mathcal{X}$ and $\mathcal{Y}$ (see \cite[Theorem 6.2]{ErgodicTheoryViaJoinings}).
The systems $\mathcal{X}$ and $\mathcal{Y}$ are \textbf{disjoint} if their only joining is the product measure, and we denote this by $\mathcal{X}\perp\mathcal{Y}$.
It is well known that if neither $\mathcal{X}$ nor $\mathcal{Y}$ are ergodic, then they are not disjoint,\footnote{\label{fn:NoErgodicityNoDisjointness} We see that if $A$ is $T$-invariant and $B$ is $S$-invariant, then for $\text{max}(0,\mu(A)+\nu(B)-1) \le t \le \text{min}(\mu(A),\nu(B))$ the measure $\lambda_t := t\mu_A\otimes\nu_B+(\mu(A)-t)\mu_A\otimes\nu_{B^c}+(\mu(B)-t)\mu_{A^c}\otimes\nu_B+(1-\mu(A)-\mu(B)+t)\mu_{A^c}\otimes\nu_{B^c}$ is a joining of $\mu$ and $\nu$.} so disjointness is only meaningfully discussed when at least one of the systems is ergodic.

Given (not necessarily distinct) systems $\mathcal{X}$ and $\mathcal{Y}$, a \textbf{Markov operator} is a bounded positive linear operator $M:L^2(X,\mu)\rightarrow L^2(Y,\nu)$ for which $M\mathbbm{1}_X = \mathbbm{1}_Y$.\footnote{That $M$ is positive here means that for $f\in L^2(X,\mu)$ with $f\geq 0$, one has $Mf\geq 0$ (note that for all $f\in L^2(X,\mu)$, $\langle Mf,f\rangle\geq 0$).}
There is a one-to-one correspondence between joinings $\lambda \in J(\mathcal{X},\mathcal{Y})$ and Markov operators $M_\lambda$ that intertwine $T$ and $S$, i.e., Markov operators $M_\lambda$ for which $M_\lambda T^g = S^gM_\lambda$ for all $g \in G$.
If $M_\lambda$ is the Markov operator associated to a joining $\lambda \in J^e(\mathcal{X},\mathcal{Y})$, then it is \textbf{indecomposable}, and this happens if and only if $M_\lambda$ is an extreme point in the space of Markov operators that intertwine $T$ and $S$.

The following lemma is well known in the folklore, and used implicitly in \cite{ErgodicPerp,UniquelyErgodicPerp}. 
For completeness we present a proof.

\begin{lemma}\label{lem:DisjointnessAndErgodicDecomposition}
    Let $\mathcal{X}$ and $\mathcal{Y}$ be systems with $\mathcal{X}$ ergodic.
    $\mathcal{Y}$ is disjoint from $\mathcal{X}$ if and only if almost every ergodic component of $\mathcal{Y}$ is disjoint from $\mathcal{X}$.
\end{lemma}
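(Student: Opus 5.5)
We prove both implications by passing joinings between $\mathcal{Y}$ and its ergodic components. Let $(\nu_\omega)_{\omega\in\Omega}$ be the ergodic decomposition of $\nu$ over the invariant factor $(\Omega,\mathscr{F},\mathbb{P},\text{Id})$ of $\mathcal{Y}$, and write $\mathcal{Y}_\omega=(Y,\mathscr{A},\nu_\omega,S)$.

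\emph{($\Leftarrow$)} Assume $\mathcal{X}\perp\mathcal{Y}_\omega$ for $\mathbb{P}$-a.e. $\omega$, and let $\lambda\in J(\mathcal{X},\mathcal{Y})$. Let $\mathscr{I}$ be the $S$-invariant $\sigma$-algebra of $Y$. Then $\pi_Y^{-1}(\mathscr{I})$ is a $(T\times S)$-invariant sub-$\sigma$-algebra of $X\times Y$, and we disintegrate $\lambda$ over it, obtaining $\lambda=\int_\Omega\lambda_\omega\d\mathbb{P}(\omega)$. Since $\mathscr{I}$ consists of invariant sets, $\lambda_\omega$ is $T\times S$-invariant for a.e. $\omega$. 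By uniqueness of disintegration, its $Y$-marginal is $\nu_\omega$. The key point is to identify the $X$-marginal of $\lambda_\omega$; call it $\mu_\omega'$. It is $T$-invariant, and $\int\mu'_\omega\d\mathbb{P}=\mu$. Because $\mu$ is ergodic, hence an extreme point of the $T$-invariant measures, $\mu'_\omega=\mu$ for a.e. $\omega$.

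To make this rigorous with countably many test functions, fix a countable dense family $\{f_n\}\subseteq L^\infty(X)$. For each $n$, the function $\omega\mapsto\int f_n\d\mu'_\omega$ is $\mathscr{F}$-measurable. We can also realize it as the conditional expectation of $f_n\otimes 1$ given $\pi_Y^{-1}\mathscr{I}$. That conditional expectation is measurable with respect to the invariant $\sigma$-algebra of the joining. By the mean ergodic theorem it equals the limit of Følner averages of $T^gf_n\otimes 1$. Since $\mathcal{X}$ is ergodic, those averages converge in $L^2(\mu)$ to $\int f_n\d\mu$, and hence also in $L^2(\lambda)$. Thus $\lambda_\omega\in J(\mathcal{X},\mathcal{Y}_\omega)$, so by hypothesis $\lambda_\omega=\mu\otimes\nu_\omega$ for a.e. $\omega$. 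Integrating gives $\lambda=\mu\otimes\nu$.

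\emph{($\Rightarrow$)} Assume $\mathcal{X}\perp\mathcal{Y}$ and suppose, for contradiction, that the set $B=\{\omega:\mathcal{X}\not\perp\mathcal{Y}_\omega\}$ has positive measure. We need a measurable selection $\omega\mapsto\lambda_\omega\in J(\mathcal{X},\mathcal{Y}_\omega)\setminus\{\mu\otimes\nu_\omega\}$ for $\omega\in B$, and $\lambda_\omega=\mu\otimes\nu_\omega$ otherwise. Then $\lambda:=\int\lambda_\omega\d\mathbb{P}$ is a joining of $\mathcal{X}$ and $\mathcal{Y}$. It differs from $\mu\otimes\nu$, because the $\lambda_\omega$ are recovered as its disintegration over $\pi_Y^{-1}\mathscr{I}$, so they are uniquely determined a.e. This contradicts $\mathcal{X}\perp\mathcal{Y}$.

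The main obstacle is measurability, both of $B$ and of the selection. Work in a compact metric model of $X$ and $Y$, so that $\mathcal{M}(X\times Y)$ is a compact metrizable space. The set
\[
\mathcal{J}=\{(\omega,\lambda):\lambda\in J(\mathcal{X},\mathcal{Y}_\omega)\}
\]
is Borel: invariance and the marginal conditions are countably many Borel conditions, since $\omega\mapsto\nu_\omega$ is measurable. Removing the graph of $\omega\mapsto\mu\otimes\nu_\omega$ leaves a Borel set whose projection is $B$. That projection is analytic, hence $\mathbb{P}$-measurable. The Jankov--von Neumann uniformization theorem then gives a universally measurable selection on $B$, which can be modified on a null set to be Borel. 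This completes the argument.
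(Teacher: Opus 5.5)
Your proposal is correct and follows the paper's proof. For $(\Leftarrow)$ you disintegrate $\lambda$ over the invariant factor of $\mathcal{Y}$ and use extremality of the ergodic measure $\mu$ to see that the fibres are joinings of $\mu$ and $\nu_\omega$; for $(\Rightarrow)$ you integrate a family of component joinings and recover it by uniqueness of disintegration.

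Your explicit Jankov--von Neumann selection supplies a step the paper leaves implicit, since the paper only shows that every measurable selection is a.e. the product. One small imprecision in your ``rigorous'' paragraph: the F{\o}lner averages converge to the conditional expectation onto the full invariant $\sigma$-algebra of $\lambda$, not onto $\pi_Y^{-1}(\mathscr{I})$. You therefore need the tower property to pass to $\pi_Y^{-1}(\mathscr{I})$, which is harmless because the limit is constant.
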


\begin{proof}
Let $\ZZ$ denote the invariant factor of $\YY$ with factor map $\pi:Y\to Z$ and let $z\mapsto\nu_z$ be the ergodic decomposition of $\nu$.

    For the first direction, let us assume that almost every ergodic component of $\mathcal{Y}$ is disjoint from $\mathcal{X}$.
    Let $\lambda$ be a joining of $\mu$ and $\nu$ and let $z\mapsto \lambda_z$ be the disintegration of $\lambda$ with respect to the factor map $\pi\circ\pi_Y$.
    The projections $(\pi_X)_*\lambda_z$ are $T$-invariant measures on $X$ and the integral $\int_Z(\pi_X)_*\lambda_z\d\rho(z)=(\pi_X)_*\lambda=\mu$ is an ergodic measure; therefore almost every $(\pi_X)_*\lambda_z$ must equal $\mu$ and therefore $\lambda_z$ is a joining of $\mu$ and $\nu_z$ for almost every $z\in Z$.
    The assumption now implies that $\lambda_z=\mu\times\nu_z$, whence it follows that $\lambda=\mu\times\nu$.

    % Let $\lambda$ be a joining of $\mu$ and $\nu$, let $\{\nu_z\}_{z \in Z}$ be the ergodic decomposition of $\nu$ and let $\{\lambda_y\otimes\delta_y\}_{y \in Y}$ be the disintegration of $\lambda$ with respect to $\pi_Y$.
    % Observe that

    % \begin{equation}
    %     \mu = \pi_X\lambda=\int_Y\lambda_yd\nu(y) = \int_Z\int_{Y_z}\lambda_yd\nu_z(y)d\rho(z).
    % \end{equation}
    % Since each $\nu_z$ is an $S$-invariant measure and $T\lambda_y=\lambda_{Sy}$, we see that each measure $\mu_z := \int_{Y_z}\lambda_yd\nu_z(y)$ is a $T$-invariant probability measure on $X$.
    % Since $\mathcal{X}$ is ergodic, $\mu$ is an extreme point in the space of $T$-invariant probability measures, so we must have that $\mu_z = \mu$ for $\rho$-almost all $z \in Z$.
    % Letting $\mathcal{Y}_z$ denote the ergodic component of $\mathcal{Y}$ corresponding to $\nu_z$, we see that $\lambda$ restricts to a joining $\lambda_z$ of $\mathcal{X}$ and $\mathcal{Y}_z$ on $X\times Y_z$.
    % Since $\mathcal{X}$ and $\mathcal{Y}_z$ were assumed to be disjoint, we see that $\lambda_z = \mu\times\nu_z$, hence $\lambda = \mu\times\nu$.

    For the next direction, suppose $\mathcal{Y}$ is disjoint from $\mathcal{X}$ and let $f:Z\rightarrow \mathcal{M}(\mathscr{B}\otimes\mathscr{A})$ be such that $f(z) \in J(\mu,\nu_z)$ for all $z \in Z$.
    The measure $\lambda := \int_Z f(z) d\rho(z)$ is a joining of $\mu$ and $\nu$, so it must equal $\mu\times\nu$.
    The disintegration of $\lambda$ with respect to the factor map $\pi\circ\pi_Y$ to $\ZZ$ is precisely $\big(f(z)\big)_{z\in Z}$, which therefore forces $f(z)$ to equal $\mu\times\nu_z$ for $\rho$-almost every $z\in Z$. 
    This implies that almost every ergodic component $\nu_z$ of $\nu$ is disjoint from $\mu$.
    % For the next direction, let us assume for the sake of contradiction that there exists $A \subseteq Z$ with $\rho(A) > 0$ and for each $z \in A$ there is a non-trivial joining $\lambda_z$ of $\mu$ and $\nu_z$.
    % %systems $\mathcal{Y}_z$ and $\mathcal{X}$ are not disjoint.
    % Let $A_\mu$ denote the set of measures $\eta \in \mathcal{M}(\mathcal{X}\times\mathcal{Y})$ that are a joining of $\mu$ with some ergodic component $\nu_z$ of $\nu$ corresponding to $z\in A$.
    % Observe that $A_\mu$ is measurable.
    % For each $z \in A$, the set $J(\mu,\nu_z)$ of joinings between $\mu$ and $\nu_z$ is a closed subset of a $\mathcal{M}(X\times Y)$ with the weak$^*$ topology.
    % It follows that $J(\mu,\nu_z)\setminus\{\mu\otimes\nu_z\}$ is $\sigma$-compact, so \cite[Proposition 3.1]{BorelSelectionLemma} tells us that there exists a Borel measurable function $f:A\rightarrow \mathcal{M}(X\times Y)$ for which $f(z) \in J(\mu,\nu_z)\setminus\{\mu\otimes\nu_z\}$ for all $z \in A$.
    % The measure $\lambda := \int_Z \mu\otimes\nu_z d\rho(z)$ is a nontrivial joining of $\mu$ and $\nu$, which contradicts the disjointness of $\mathcal{X}$ and $\mathcal{Y}$.
\end{proof}

%Furstenberg's Theorem about lifting disjointness via ergodic compact group extensions \cite{DisjointnessInErgodicTheory} was used to show that $\mathcal{D}\cap\mathcal{E} \subseteq \mathcal{W}^\perp$ in the case of $\mathbb{Z}$-systems.

In \cite{DisjointnessInErgodicTheory} Furstenberg showed that $\mathcal{D}\cap\mathcal{E} \subseteq \mathcal{W}^\perp$ in the case of $\mathbb{Z}$-systems by ``lifting disjointness'' through compact group extensions. 
We require the following closely related result.

\begin{theorem}[{cf. \cite[Theorem 10.19.2]{ErgodicTheoryViaJoinings}}]\label{thm:FurstenbergLiftingDisjointness}
    Let $\mathcal{X}$ and $\mathcal{Y}$ be disjoint ergodic systems.
    If $\mathcal{X}$ is weakly mixing and $\mathcal{Y}^K$ is an ergodic compact group extension of $\mathcal{Y}$, then $\mathcal{Y}^K\perp\mathcal{X}$.
\end{theorem}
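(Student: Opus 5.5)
We argue through the Markov operator picture of joinings. Let $\lambda$ be an ergodic joining of $\mathcal{Y}^K$ and $\mathcal{X}$; by the ergodic decomposition of joinings, it suffices to treat ergodic $\lambda$. Because $\mathcal{X}$ is ergodic, an arbitrary joining decomposes into ergodic joinings of the ergodic systems $\mathcal{Y}^K$ and $\mathcal{X}$. Write $\mathcal{Y}^K = (Y\times K, \nu\otimes m_K, S\rtimes\phi)$ with $\phi$ taking values in left translations. Restricted to $Y\times X$, the joining $\lambda$ is a joining of $\mathcal{Y}$ and $\mathcal{X}$, hence equals $\nu\otimes\mu$ by hypothesis. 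So the problem is to show that the lifted measure $\lambda$ on $Y\times K\times X$ must be $\nu\otimes m_K\otimes\mu$.

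The key step uses the right action of $K$ on $\mathcal{Y}^K$, given by $R_k(y,z)=(y,zk)$. This commutes with $S\rtimes\phi$ because $\phi$ acts by left translations. Set $\lambda_k := (R_k\times \mathrm{id})_*\lambda$. Each $\lambda_k$ is again an ergodic joining of $\mathcal{Y}^K$ and $\mathcal{X}$ that projects to $\nu\otimes\mu$ on $Y\times X$. Averaging gives $\int_K \lambda_k\, dm_K(k)$, which is the relatively independent lift, namely $\nu\otimes m_K\otimes\mu$. This holds because the averaged measure is $R$-invariant and projects to $\nu\otimes\mu$, and $m_K$ is the unique $K$-invariant fiber measure. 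Since $\nu\otimes m_K\otimes\mu$ is a convex combination of the $\lambda_k$, we want to know whether $\nu\otimes m_K\otimes\mu$ is ergodic for $(S\rtimes\phi)\times T$. If it is, it is extreme among invariant measures, and so $\lambda_k=\nu\otimes m_K\otimes\mu$ for a.e. $k$. Because $\lambda_k$ depends continuously on $k$, this gives $\lambda=\lambda_0$, which is the product.

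The main obstacle is therefore ergodicity of the product $\mathcal{Y}^K\times\mathcal{X}$, and this is exactly where weak mixing of $\mathcal{X}$ enters. For $G$-actions, weak mixing of $\mathcal{X}$ means $\mathcal{X}\times\mathcal{W}$ is ergodic for every ergodic system $\mathcal{W}$. Equivalently, $L^2_0(X)$ carries no finite-dimensional invariant subspaces, so the tensor product of its representation with any $L^2(\mathcal{W})$ has no invariant vectors outside $L^2(\mathcal{W})\otimes 1$. Applying this with $\mathcal{W}=\mathcal{Y}^K$, which is ergodic by assumption, gives ergodicity of $\mathcal{Y}^K\times\mathcal{X}$. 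I would verify this with the standard Hilbert–Schmidt argument: an invariant $F\in L^2(\mathcal{W}\times\mathcal{X})$ orthogonal to functions of $w$ alone defines an intertwining Hilbert–Schmidt operator. Its compact self-adjoint square then yields finite-dimensional invariant subspaces of $L^2_0(X)$, which contradicts weak mixing.

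Note that the disjointness of $\mathcal{X}$ and $\mathcal{Y}$ is used only to identify the projection of $\lambda$ to $Y\times X$ as the product. Ergodicity of $\mathcal{Y}^K$ is needed in the final step, together with weak mixing of $\mathcal{X}$.
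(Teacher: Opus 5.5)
Your proof is correct. Note that the paper gives no proof of this statement: it quotes it from \cite[Theorem 10.19.2]{ErgodicTheoryViaJoinings} and uses it only to derive Lemma \ref{lem:FiniteExtensionsPreserveDisjointness}. What you wrote is the classical self-contained argument, essentially Furstenberg's original one. It has four steps:
\begin{enumerate}[(a)]
\item Disjointness from $\mathcal{Y}$ forces the $Y\times X$ marginal of $\lambda$ to be $\nu\otimes\mu$.
\item The right translations $R_k$ commute with $S\rtimes\phi$ exactly because $\phi$ acts by left translations. They preserve $\nu\otimes m_K$ because Haar measure on a compact group is bi-invariant.
\item Averaging $(R_k\times\mathrm{id})_*\lambda$ over $k$ convolves every fibre measure over $Y\times X$ with $m_K$. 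This produces $\nu\otimes m_K\otimes\mu$.
\item Ergodicity of $\mathcal{Y}^K\times\mathcal{X}$ (ergodic times weakly mixing, via your Hilbert--Schmidt argument) then forces $\lambda$ to be the product.
\end{enumerate}
Compared with the citation, this gives an elementary proof. It rests on the same principle as Lemma \ref{lem:ProductIsExtreme}, namely that an ergodic measure cannot be a nontrivial barycentre of invariant measures. It also works verbatim for any countable, indeed lcsc, group once weak mixing is read as the absence of finite-dimensional invariant subspaces of $L^2_0(X,\mu)$, which is consistent with Remark \ref{rem:StillHoldsForGeneralGroups}.

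A few steps can be streamlined.
\begin{itemize}
\item The reduction to ergodic $\lambda$ is never used. The barycentre argument only needs each $\lambda_k$ to be invariant, so it applies to an arbitrary joining directly.
\item Continuity of $k\mapsto\lambda_k$ is not needed. A single $k$ with $\lambda_k=\nu\otimes m_K\otimes\mu$ suffices, because $\lambda=(R_{k^{-1}}\times\mathrm{id})_*\lambda_k$ and the product measure is invariant under $R_{k^{-1}}\times\mathrm{id}$. Since $K$ need not be abelian, your base point should be the identity $e$, not $0$.
\item You need that an ergodic measure is not a nontrivial integral of invariant measures, which is slightly more than extremality for finite convex combinations. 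This follows from extremality applied to $\theta=m_K(A)\theta_A+(1-m_K(A))\theta_{A^c}$, where $\theta_A:=m_K(A)^{-1}\int_A\lambda_k\,dm_K(k)$, for Borel $A\subseteq K$ with $0<m_K(A)<1$. One then tests against a countable generating algebra of sets.
\end{itemize}
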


\begin{lemma}\label{lem:FiniteExtensionsPreserveDisjointness}
    Let $\mathcal{X}$ be a weakly mixing system and let $\mathcal{Y}$ be a system that is disjoint from $\mathcal{X}$.
    \begin{enumerate}[(i)]
        \item If $\mathcal{Y}^c$ is a compact extension of $\mathcal{Y}$, then $\mathcal{Y}^c\perp\mathcal{X}$.
        
        \item If $\mathcal{Y}^f$ is a finite extension of $\mathcal{Y}$, then $\mathcal{Y}^f\perp\mathcal{X}$.
    \end{enumerate}
\end{lemma}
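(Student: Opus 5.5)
Since $\mathcal{X}$ is weakly mixing, it is ergodic. By Lemma \ref{lem:DisjointnessAndErgodicDecomposition}, almost every ergodic component $\mathcal{Y}_z$ of $\mathcal{Y}$ is disjoint from $\mathcal{X}$. The same lemma reduces the claim for $\mathcal{Y}^c$ (resp. $\mathcal{Y}^f$) to its ergodic components. So for (i) it suffices to show that every ergodic component of $\mathcal{Y}^c$ is disjoint from $\mathcal{X}$.

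We need to match ergodic components. The factor map $\mathcal{Y}^c\to\mathcal{Y}$ pushes each ergodic component of $\mathcal{Y}^c$ to an ergodic measure on $Y$. By uniqueness of the ergodic decomposition, this pushforward is an ergodic component $\mathcal{Y}_z$ of $\mathcal{Y}$ for almost every component. Moreover, these components occur with full measure in the decomposition of $\nu$, so they can be taken among the ones disjoint from $\mathcal{X}$. By the definition of compact extension, that ergodic component of $\mathcal{Y}^c$ is isomorphic to an ergodic compact group extension of $\mathcal{Y}_z$. Theorem \ref{thm:FurstenbergLiftingDisjointness}, applied to the disjoint ergodic systems $\mathcal{X}$ (weakly mixing) and $\mathcal{Y}_z$, then shows that it is disjoint from $\mathcal{X}$. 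The measurability bookkeeping, namely that a null set of bad $z$ yields a null set of bad components upstairs, is the only delicate point in (i). It follows from writing the decomposition of $\mathcal{Y}^c$ as an integral over its invariant factor, which maps onto the invariant factor of $\mathcal{Y}$.

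For (ii), we show that a finite extension is contained in a compact extension and then use that disjointness passes to factors. Pass to ergodic components as before, so that $\mathcal{Y}^f_w$ is an ergodic extension of $\mathcal{Y}_z$ with fibres of at most $n$ atoms. By ergodicity the number of atoms is an a.e. constant $k$, and the fibre measure is uniform: the atom-weight multiset is invariant, and a standard argument shows that ergodicity forces the atoms to have equal weight or splits them into invariant subclasses. The uniform case is the one to handle. By Rohlin's theorem, $\mathcal{Y}^f_w$ is then a skew product $\mathcal{Y}_z\times\{1,\dots,k\}$ with a cocycle into $S_k$. Its ergodic components under the lifted $S_k$-extension $\mathcal{Y}_z\times S_k$ give a compact group extension $\widetilde{\mathcal{Y}}$ having $\mathcal{Y}^f_w$ as a factor, via $(y,\sigma)\mapsto(y,\sigma(1))$ on a suitable ergodic component. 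By part (i), $\widetilde{\mathcal{Y}}\perp\mathcal{X}$, and disjointness passes to factors, so $\mathcal{Y}^f_w\perp\mathcal{X}$.

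The main obstacle is the reduction to uniform atoms and the ergodic component of the $S_k$-skew product. I would handle unequal atom weights by grouping atoms by weight. This gives a measurable invariant partition of each fibre, and ergodicity implies that only one weight class can carry the measure. Alternatively, I could cite the standard fact that an ergodic finite extension is isometric, hence a compact extension (cf. \cite[Chapter 9]{ErgodicTheoryViaJoinings}), which would reduce (ii) directly to (i).
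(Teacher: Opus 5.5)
Your proof is correct. Part (i) is the paper's argument: the ergodic-decomposition lemma, then Furstenberg's lifting theorem on almost every ergodic component, then the lemma again. You additionally make the measurability bookkeeping explicit.

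Part (ii) takes a genuinely different, and more careful, route. The paper asserts in one line that each ergodic component of $\mathcal{Y}^f$ is isomorphic to a $\text{Sym}_n$-group extension $Y_\omega\times\text{Sym}_n$ with measure $\nu_\omega\otimes m_n$, and then invokes (i). You proceed in steps:
\begin{enumerate}[(a)]
\item You show that the fibre measures of an ergodic component are uniform on $k$ atoms. The cleanest argument is that $x\mapsto\mu_{\pi(x)}(\{x\})$ is $T$-invariant, hence a.e.\ constant.
\item You write that component as a skew product over $\{1,\dots,k\}$ with an $S_k$-valued cocycle.
\item You realize it as a factor, via $(y,\sigma)\mapsto(y,\sigma(1))$, of an ergodic component of the $S_k$-group extension. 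In fact almost every ergodic component of $\nu_z\otimes m_{S_k}$ projects onto it, because the image measure is ergodic.
\item You apply (i) to that group extension and use that disjointness passes to factors.
\end{enumerate}

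The paper's sentence is shorter, but taken literally it needs exactly the correction you supply. A uniform $k$-point extension is in general only a factor of the $S_k$-extension: the latter has $k!$ atoms per fibre and need not even be ergodic, e.g.\ when the cocycle takes values in a proper subgroup.

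Your route costs two standard facts that you state without proof:
\begin{itemize}
\item ergodic components of a finite group extension of an ergodic system are group extensions by a subgroup $H\le S_k$;
\item disjointness passes to factors, by lifting a joining via the relatively independent extension.
\end{itemize}
With these, it proves (ii) using the paper's definition of compact extension exactly as written. Your fallback, that ergodic finite extensions are isometric and the paper's footnote identifies isometric with compact extensions, is closest to what the paper's one-liner intends.
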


\begin{proof}
    We first prove (i).
    Lemma \ref{lem:DisjointnessAndErgodicDecomposition} tells us that a.e. ergodic component of $\mathcal{Y}$ is disjoint from $\mathcal{X}$.
    Since a.e. ergodic component of $\mathcal{Y}^c$ is an ergodic compact group extension of an ergodic component of $\mathcal{Y}$, Theorem \ref{thm:FurstenbergLiftingDisjointness} tells us that a.e. ergodic component of $\mathcal{Y}^c$ is disjoint from $\mathcal{X}$.
    Another application of Lemma \ref{lem:DisjointnessAndErgodicDecomposition} shows us that $\mathcal{Y}^c\perp\mathcal{X}$.
    
    We now prove (ii).
    Let $\text{Sym}_n$ denote the symmetric group on $n$ points and let $m_n$ be the normalized counting measure on $\text{Sym}_n$.
    We see that each ergodic component of $\mathcal{Y}^f$ is measurably isomorphic to a compact group extension $(Y_\omega\times \text{Sym}_n,\mathscr{A}\otimes\mathscr{P}(\text{Sym}_n),\nu_\omega\otimes m_n,((S\rtimes\phi)^g)_{g \in G})$ for some ergodic component $\mathcal{Y}_\omega$ of $\mathcal{Y}$, some $\phi:G\times Y\rightarrow \text{Sym}_n$, and some $n \in \mathbb{N}$.
    Consequently, part (ii) follows from part (i).
\end{proof}
    
    \begin{corollary}\label{cor:ClosedUnderCompactExtensions}
        Let $\mathscr{C} \subseteq \mathcal{W}$ be a class of weakly mixing systems.
        \begin{enumerate}[(i)]
            \item The class $\mathscr{C}^\perp$ is closed under finite extensions and compact extensions.

            \item The class $\mathscr{M}(\mathscr{C}^\perp)$ is closed under finite extensions and compact extensions.

            \item We have $\mathcal{D} \subseteq \mathscr{M}(\mathscr{C}^\perp)$.
        \end{enumerate}
    \end{corollary}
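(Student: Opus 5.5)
Part (i) follows directly from Lemma \ref{lem:FiniteExtensionsPreserveDisjointness}. Let $\mathcal{Y} \in \mathscr{C}^\perp$ and let $\mathcal{Y}'$ be a finite or compact extension of $\mathcal{Y}$. For each $\mathcal{X} \in \mathscr{C}$, $\mathcal{X}$ is weakly mixing and $\mathcal{Y}\perp\mathcal{X}$. Lemma \ref{lem:FiniteExtensionsPreserveDisjointness}(i) or (ii) then gives $\mathcal{Y}'\perp\mathcal{X}$. Hence $\mathcal{Y}' \in \mathscr{C}^\perp$.

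For (ii), let $\mathcal{Y} \in \mathscr{M}(\mathscr{C}^\perp)$ and let $\mathcal{Y}'$ be a compact (resp. finite) extension of $\mathcal{Y}$ with factor map $\pi:Y'\to Y$. Take $\mathcal{Z} \in \mathscr{C}^\perp$ and a joining $\lambda'$ of $\mathcal{Y}'$ and $\mathcal{Z}$; we need $(Y'\times Z,\lambda')\in\mathscr{C}^\perp$.

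\begin{enumerate}[(1)]
\item Push $\lambda'$ forward under $\pi\times\mathrm{id}$ to get a joining $\lambda$ of $\mathcal{Y}$ and $\mathcal{Z}$. Since $\mathcal{Y}$ is a multiplier, $(Y\times Z,\lambda)\in\mathscr{C}^\perp$.
\item The system $(Y'\times Z,\lambda')$ is an extension of $(Y\times Z,\lambda)$ via $\pi\times\mathrm{id}$. It remains to check that this extension is compact (resp. finite); (i) then finishes the argument.
\end{enumerate}

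The finite case of step (2) is straightforward. The disintegration of $\lambda'$ over $(y,z)$ is supported on $\pi^{-1}(y)\times\{z\}$. In fact, $\lambda'$ is a joining of $\mu'$ with $\lambda$ over the common factor $\mathcal{Y}$, so the fiber measures are absolutely continuous with respect to $\mu'_y$. They therefore have at most $n$ atoms.

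The compact case of step (2) is the main obstacle. The standard fact to use is that if $\mathcal{Y}'\to\mathcal{Y}$ is compact (isometric) and $\mathcal{W}$ is any joining of $\mathcal{Y}'$ with some system $\mathcal{Z}$, then $\mathcal{W}\to\mathcal{Y}\vee\mathcal{Z}$ is again compact. One can see this via the characterization by relatively almost periodic functions: $L^2(Y')$ is spanned by functions whose orbits lie in finite-rank $L^\infty(Y)$-modules, and these remain finite-rank modules over $L^\infty(Y\times Z,\lambda)$. That gives a dense set of relatively almost periodic functions generating the extension, as in \cite[Chapter 9]{ErgodicTheoryViaJoinings} and Furstenberg. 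Our definition goes through ergodic components, so we must also pass to ergodic components of $\lambda'$. If this becomes delicate, an alternative is to reduce via Lemma \ref{lem:DisjointnessAndErgodicDecomposition} to the case where $\lambda'$ is ergodic, and then use that an ergodic joining of a group extension $Y\times_\phi K$ with $\mathcal{Z}$ is a quotient of a group extension of $Y\vee Z$ by the same cocycle. Quotients of group extensions by closed subgroups are still isometric, hence compact extensions.

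For (iii), note first that $\mathcal{I}\subseteq\mathscr{M}(\mathscr{C}^\perp)$. The trivial one-point system is a multiplier, since its joinings with $\mathcal{Z}$ are just copies of $\mathcal{Z}$. Each ergodic component of a distal system is reached from the trivial system by a (transfinite, countable) tower of compact extensions and inverse limits. Part (ii) handles the successor steps. For inverse limits, note that $\mathscr{M}(\mathscr{C}^\perp)$ is a characteristic class \cite[Section 2.1]{ArithmeticFunctionsOrthogonalToDeterminism}, so it is closed under countable joinings, and an inverse limit is a countable joining of its stages. For non-ergodic distal systems, the compact-extension definition via ergodic components makes the tower start at the invariant factor, which is an identity system. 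Identity systems lie in $\mathscr{M}(\mathscr{C}^\perp)$ because any joining of $\mathcal{I}$ with $\mathcal{Z}\in\mathscr{C}^\perp$ has ergodic components that are components of $\mathcal{Z}$ joined with points. So by Lemma \ref{lem:DisjointnessAndErgodicDecomposition} it stays in $\mathscr{C}^\perp$, since members of $\mathscr{C}$ are ergodic.
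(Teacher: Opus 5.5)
Your proposal is correct and follows the same route as the paper. Part (i) is Lemma \ref{lem:FiniteExtensionsPreserveDisjointness} applied to each member of $\mathscr{C}$; (ii) observes that a joining of the extension with $\mathcal{Z}\in\mathscr{C}^\perp$ is a finite or compact extension of the pushed-down joining, which lies in $\mathscr{C}^\perp$, and then applies (i); and (iii) writes distal systems as inverse limits of towers of compact extensions and uses that the characteristic class $\mathscr{M}(\mathscr{C}^\perp)$ is closed under inverse limits. You justify more than the paper does, namely the lifting of compactness through joinings and the non-ergodic case, which the paper treats as observations.

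One correction in the finite case of (ii). Fibre measures of a joining over a common factor need not be absolutely continuous with respect to $\mu'_y$ in general: the diagonal self-joining of a compact group rotation over the trivial factor is a counterexample. The conclusion does hold here, for a different reason. Let $\lambda_y$ be the disintegration of the pushed-down joining $\lambda$ over $\mathcal{Y}$. Then $\int_Z\lambda'_{(y,z)}\,d\lambda_y(z)=\mu'_y$, so almost every $\lambda'_{(y,z)}$ is carried by the at most $n$ atoms of $\mu'_y$.
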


    \begin{proof}
        Part (i) follows from Lemma \ref{lem:DisjointnessAndErgodicDecomposition}.

        We now proceed to prove (ii).
        Let $\mathcal{X} \in \mathscr{M}(\mathscr{C}^\perp)$ and $\mathcal{Y} \in \mathscr{C}^\perp$ both be arbitrary, and let $\mathcal{X}^c$ be a compact extension of $\mathcal{X}$.
        Let $\mathcal{X}^c\vee\mathcal{Y}$ be a joining of $\mathcal{X}^c$ and $\mathcal{Y}$, and observe that it is a compact extension of some joining $\mathcal{X}\vee\mathcal{Y}$ of $\mathcal{X}$ and $\mathcal{Y}$.
        We have $\mathcal{X}\vee\mathcal{Y} \in \mathscr{C}^\perp$ by assumption, so part (i) tells us that $\mathcal{X}^c\vee\mathcal{Y} \in \mathscr{C}^\perp$.
        %The argument for a compact group extension is identical.

        We recall that $\mathscr{M}(\mathscr{C}^\perp)$ is a characteristic class, so it is closed under inverse limits.
        Since any distal system is an inverse limit of a tower of compact extensions of the 1-point system, part (iii) follows from part (ii).
    \end{proof}

    \begin{remark}\label{rem:DistalIsNotRigidOrParreau}
        Corollary \ref{cor:ClosedUnderCompactExtensions} is implicit in \cite{WPerpAndItsMultipliers} in the case of $\mathbb{Z}$-systems, and it implies that $\mathcal{D} \subseteq \mathscr{M}(\mathcal{M}^\perp)$ as well as $\mathcal{D} \subseteq \mathscr{M}(\mathcal{S}^\perp)$.
        The system $\mathcal{X} = (\mathbb{T}^2,\mathscr{L}^2,m^2,T)$, where $m$ denotes the Lebesgue measure, $\mathscr{L}:=\text{Borel}(\mathbb T)$, and $T(x,y) = (x,y+x)$, is a distal system, in fact, a $2$-step nilsystem, that is not rigid.
        We will also see later on that $\mathcal{X}$ is not a $\mathcal U$-generated factor, so $\mathscr{M}(\mathcal{M}^\perp) \not\subseteq \mathcal{R}$ and $\mathscr{M}(\mathcal{S}^\perp) \not\subseteq \mathcal{G}$, as claimed in the introduction.
        In fact, $\mathcal{X}$ has a Lebesgue component to its spectrum, so it also shows that $\mathscr{M}(\mathcal{A}^\perp)$ strictly contains the class of singular spectrum systems.
    \end{remark}

    If either of $\mathcal{X}$ and $\mathcal{Y}$ is not ergodic, then there is no ergodic joining of the two systems.
    Nonetheless, we have the following result.\\

    \begin{lemma}\label{lem:ProductIsExtreme}
        If $\mathcal{X}$ and $\mathcal{Y}$ are systems with $\mathcal{X}$ weakly mixing, then $\mu\otimes\nu \in J^e(\mathcal{X},\mathcal{Y})$.
    \end{lemma}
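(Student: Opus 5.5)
We argue through the correspondence between joinings and intertwining Markov operators. The claim is that $\lambda_0:=\mu\otimes\nu$ cannot be written as a nontrivial convex combination of joinings. So suppose $\mu\otimes\nu = t\lambda_1+(1-t)\lambda_2$ with $0<t<1$ and $\lambda_1,\lambda_2\in J(\mathcal{X},\mathcal{Y})$. We want to show $\lambda_1=\lambda_2=\mu\otimes\nu$.

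Since $\lambda_1\le t^{-1}(\mu\otimes\nu)$, the measure $\lambda_1$ is absolutely continuous with respect to $\mu\otimes\nu$. Its density $F=d\lambda_1/d(\mu\otimes\nu)$ is bounded by $t^{-1}$, hence lies in $L^2(\mu\otimes\nu)$. Moreover $F$ is invariant under $T^g\times S^g$ for every $g\in G$, because both measures are invariant. The problem therefore reduces to the following statement: every $T\times S$-invariant function in $L^2(X\times Y,\mu\otimes\nu)$ is measurable with respect to $\{\emptyset,X\}\otimes\mathscr{A}$, i.e.\ depends only on $y$.

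To prove this, we use the spectral structure of weak mixing. For a weakly mixing $\mathcal{X}$, the space $L^2_0(X,\mu)$ contains no nonzero finite-dimensional $T$-invariant subspaces; equivalently, the Kronecker factor is trivial. Write $L^2(\mu\otimes\nu)\cong L^2(\mu)\otimes L^2(\nu)$, which we identify with the Hilbert–Schmidt operators $L^2(\nu)\to L^2(\mu)$ via $\bar{\cdot}$-conjugation. Under this identification, an invariant $F$ with zero projection onto $\mathbbm 1\otimes L^2(\nu)$ corresponds to a Hilbert–Schmidt operator $K:L^2(\nu)\to L^2_0(\mu)$ satisfying $T^gK=KS^g$. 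Then $KK^*$ is a compact self-adjoint operator on $L^2_0(\mu)$ commuting with every $T^g$. Its nonzero eigenspaces are therefore finite-dimensional and $T$-invariant, so they must be trivial. Hence $KK^*=0$, so $K=0$. This step, which amounts to weak mixing implying ergodicity of $\mathcal{X}\times\mathcal{Y}$ relative to $\mathcal{Y}$, is the main point. It also works for arbitrary countable abelian $G$, because we only use the absence of finite-dimensional invariant subspaces.

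We conclude that $F(x,y)=h(y)$ for some $h$. Since the $Y$-marginal of $\lambda_1$ is $\nu$, we get $\int h\,\mathbbm 1_B\,d\nu=\nu(B)$ for all $B\in\mathscr{A}$. Hence $h=1$ a.e., so $\lambda_1=\mu\otimes\nu$, and then $\lambda_2=\mu\otimes\nu$ as well. Thus $\mu\otimes\nu$ is an extreme point of $J(\mathcal{X},\mathcal{Y})$. Ergodicity of $\mathcal{Y}$ was never needed: the non-invariant directions of $\mathcal{Y}$ are absorbed into $h$, and the marginal condition then pins $h$ down.
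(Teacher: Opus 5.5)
Your proof is correct, and it takes a different route from the paper's. The paper works fibre by fibre. It disintegrates $\lambda^{(1)}$ and $\lambda^{(2)}$ over the invariant factor of $\mathcal{Y}$, and uses ergodicity of $\mu$ to see that almost every fibre measure is a joining of $\mu$ with an ergodic component $\nu_\omega$. It then invokes, as a black box, the classical fact that $\mu\otimes\nu_\omega$ is ergodic (weak mixing $\times$ ergodic is ergodic), so this product is extreme in each fibre.

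You instead pass to the bounded invariant density $F=d\lambda_1/d(\mu\otimes\nu)$. Using the compact self-adjoint operator $KK^*$, which commutes with $T$ on $L^2_0(\mu)$, you prove directly that every $T\times S$-invariant function in $L^2(\mu\otimes\nu)$ depends only on $y$. This is exactly the non-ergodic form of the black-box fact, proved inline. So your argument is self-contained and avoids the ergodic decomposition and the disintegration bookkeeping altogether.

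It also gives slightly more. You never use the $X$-marginal of $\lambda_1$, so $\mu\otimes\nu$ is extreme even among $T\times S$-invariant probability measures with $Y$-marginal $\nu$. The paper's version is shorter once the classical facts are granted, and it fits the ergodic-decomposition framework of Lemma~\ref{lem:DisjointnessAndErgodicDecomposition}. Yours matches the Markov-operator viewpoint used later in Theorem~\ref{thm:PartiallyRigidMixingDisjointness}.

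One step should be stated explicitly. The subspace $\mathbbm{1}\otimes L^2(\nu)$ is invariant under the unitary Koopman action of $T\times S$, so its orthogonal projection commutes with that action. Hence for invariant $F$, both $F_1(y)=\int_X F(x,y)\,d\mu(x)$ and $F-F_1$ are invariant. This is what lets you apply the Hilbert--Schmidt argument to $F-F_1$ and conclude $F=F_1$.
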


    \begin{proof}
        Let $\{\nu_\omega\}_{\omega \in \Omega}$ be the ergodic decomposition of $\mathcal{Y}$.
        Let $\lambda \in J(\mathcal{X},\mathcal{Y})$ be arbitrary, and let $\lambda = \int_\Omega\lambda_\omega d\mathbb{P}(\omega)$ be the disintegration of $\lambda$ with respect to the invariant factor of $\mathcal{Y}$.
        Since $\lambda$ is $T\times S$ invariant, so is each $\lambda_\omega$, hence each $\mu_\omega := (\pi_{X})_*\lambda_\omega$ is $T$-invariant.
        Since $\mu$ is ergodic and $\mu = \int_\Omega\mu_\omega d\mathbb{P}(\omega)$, we see that $\mu = \mu_\omega$ for a.e. $\omega$.
        Now let $\lambda^{(1)},\lambda^{(2)} \in J(\mathcal{X},\mathcal{Y})$ and $t \in (0,1)$ be such that $\mu\otimes\nu = t\lambda^{(1)}+(1-t)\lambda^{(2)}$.
        For $i = 1,2$, consider the disintegrations $\lambda^{(i)} = \int_{\Omega}\lambda^{(i)}_\omega d\mathbb{P}(\omega)$ of $\lambda^{(i)}$ with respect to the invariant factor of $\mathcal{Y}$, and observe that $\mu\otimes\nu_\omega = t\lambda^{(1)}_\omega+(1-t)\lambda^{(2)}_\omega$ for a.e. $\omega$. 
        Since $\mu$ is weakly mixing and $\nu_\omega$ is ergodic, we see that $\mu\otimes\nu_\omega$ is ergodic, and hence an extreme point in the space of joinings of $\mu$ and $\nu_\omega$.
        It follows that $\lambda^{(1)}_\omega = \lambda^{(2)}_\omega = \mu\otimes\nu_\omega$ for a.e. $\omega$, hence $\lambda^{(1)} = \lambda^{(2)} = \mu\otimes\nu$.
    \end{proof}

    We mention that if neither of $\mathcal{X}$ and $\mathcal{Y}$ are ergodic, then $\mu\otimes\nu \notin J^e(\mathcal{X},\mathcal{Y})$.\footnotemark[\getrefnumber{fn:NoErgodicityNoDisjointness}]
    If $\mathcal{X}$ is a Kronecker system, then $\mu\otimes\mu \notin J^e(\mathcal{X},\mathcal{X})$ regardless of whether or not $\mathcal{X}$ is ergodic.
    \begin{remark}\label{rem:StillHoldsForGeneralGroups}
        We mention that all of the results of this subsection are still true with the same proofs if the acting group $G$ is a locally compact second countable group. 
    \end{remark}
%%%%%%%%%%%%%%%%%%%%%%%%%%%%%%%%%%%%%%%%%%%%%%%%%%%%%%%%%%%%%%%%%%%%%%%%%%%%%%%%%%%%%%%%%%%%%%%%%%%%%%%%%
\subsection{Spectrum and Spectral Disjointness}\label{ssec:Spectrum}
Let $U$ and $V$ be unitary representations of a countable abelian group $G$ on separable Hilbert spaces $\mathcal{H}_1$ and $\mathcal{H}_2$ respectively.
An \textbf{intertwining operator} is a bounded linear operator $M:\mathcal{H}_1\rightarrow\mathcal{H}_2$ satisfying $MU^g = V^gM$ for all $g \in G$.
The representations $U$ and $V$ are \textbf{disjoint} if the only intertwining operator is the $0$ operator.
The systems $\mathcal{X}$ and $\mathcal{Y}$ are \textbf{spectrally disjoint} if the Koopman representations $T$ and $S$ of $G$ on $L_0^2(X,\mu)$ and $L_0^2(Y,\nu)$ respectively are disjoint as unitary representations, where $L_0^2(X,\mu) := \{f \in L^2(X,\mu)\ |\ \int_Xfd\mu = 0\}$.
It is a classical result (see \cite[Lemma 1]{ThouvenotSpectrallyDisjointImpliesDisjoint} or \cite[Theorem 6.28]{ErgodicTheoryViaJoinings}) that spectral disjointness implies Furstenberg disjointness.

Now let us fix a unitary representation $U$ of $G$ on $\mathcal{H}$.
The spectral theorem tells us that for each $\xi \in \mathcal{H}$, there exists a measure $\mu_\xi$ for which the restriction of $U$ to $\mathcal{H}_\xi := \overline{\text{Span}_\mathbb{C}\{U^g\xi\ |\ g \in G\}}$ is isomorphic to a multiplication operator on $L^2(\widehat{G},\mu_\xi)$, where $\widehat{G}$ is the Pontryagin dual of $G$.
More concretely, we let $V$ be the unitary representation of $G$ on $L^2(\widehat{G},\mu_\xi)$ given by $(V^gf)(\chi) = \chi(g)f(\chi)$.
There exists a unitary intertwining operator $M:\mathcal{H}_\xi\rightarrow L^2(\widehat{G},\mu_\xi)$ for which we also have $M(\xi) = 1$.
Since $\mathcal{H}$ is separable, there exists a countable set $\{\xi_i\}_{i \in I} \subseteq \mathcal{H}$ for which $\mathcal{H} = \oplus_{i \in I}\mathcal{H}_{\xi_i}$, hence $\mathcal{H}$ is isomorphic to $\oplus_{i \in I}L^2(\widehat{G},\mu_{\xi_i})$.
Since $I$ is countable, we may find a single measure $\mu$ for which $\mu_{\xi_i} \ll \mu$ for all $i \in I$.
While $\mu$ is not uniquely determined by $U$, its measure class $[\mu]$ is, and it is called the \textbf{maximal spectral type of $U$}.
It is well known that unitary representations $U$ and $V$ are disjoint if and only if their maximal spectral types are mutually singular measure classes.

Given a system $\mathcal{X}$, the maximal spectral type of $\mathcal{X}$ is the maximal spectral type of the Koopman operator $T$ acting on $L^2_0(X,\mu)$, and we denote it by $[X]$.
It is classical that $\mathcal{X}$ is an invariant system if and only if $[\mathcal{X}] = [\delta_{e_{\widehat{G}}}]$, $\mathcal{X}$ is ergodic if and only if $\nu(\{e_{\widehat{G}}\}) = 0$ for every $\nu \in [\mathcal{X}]$, $\mathcal{X}$ is a Kronecker system if and only if $[\mathcal{X}]$ consists of discrete measures, and $\mathcal{X}$ is weakly mixing if and only if $[\mathcal{X}]$ consists of continuous measures. 
The system $\mathcal{X}$ is defined to have \textbf{Lebesgue spectrum} if $[\mathcal{X}]$ consists of measures that are equivalent to the Haar measure $\lambda$ of $\widehat{G}$, $\mathcal{X}$ has \textbf{absolutely continuous spectrum} if $[\mathcal{X}]$ consists of measures that are absolutely continuous with respect to $\lambda$, and $\mathcal{X}$ has \textbf{singular spectrum} if $[\mathcal{X}]$ consists of measures that are mutually singular with $\lambda$.
%$\mathcal{X}$ is rigid if and only if $\mu$ is a rigid probability measure, meaning that there exists a sequence $(g_k)_{k = 1}^\infty \subseteq G$ for which $\lim_{k\rightarrow\infty}\hat{\mu}(g_k) = 1$, $\mathcal{X}$ is mildly mixing if and only if $\text{IP}^*-\lim_g\widehat{\mu}(g) = 0$, and $\mathcal{X}$ is strongly mixing if and only if $\lim_{g\rightarrow\infty}\widehat{\mu}(g) = 0$.
%%%%%%%%%%%%%%%%%%%%%%%%%%%%%%%%%%%%%%%%%%%%%%%%%%%%%%%%%%%%%%%%%%%%%%%%%%%%%%%%%%%%%%%%%%%%%%%%%%%%%%%%%%%%%%%%%%%%%%
%%%%%%%%%%%%%%%%%%%%%%%%%%%%%%%%%%%%%%%%%%%%%%%%%%%%%%%%%%%%%%%%%%%%%%%%%%%%%%%%%%%%%%%%%%%%%%%%%%%%%%%%%%%%%%%%%%%%%%%%%%%%%%%%%%%%%%%%%%%%%%%%%%
\subsection{Normal operators and the Image-Kernel decomposition}\label{ssec:ImageKernelDecomposition}
In this section we state and prove two classical results dealing with the decomposition of a separable Hilbert space $\mathcal H$. 
Denote the class of all bounded linear operators defined on $\mathcal H$ by $\mathcal B(\mathcal H)$. 
Recall that a map $V\in\mathcal B(\mathcal H)$ is called normal if $V^*V=VV^*$. (Here $V^*$ denotes the adjoint of $V$.) 
We remark that Lemma \ref{3.LemmaDecompositionOfSingleNormal} was used in the proof of  \cite[Theorem 3.1]{AlmostMixingOfAllOrders}.
\begin{lemma}\label{3.LemmaDecompositionOfSingleNormal}
    Let $\mathcal H$ be a separable Hilbert space and let $V\in \mathcal B(\mathcal H)$ be a normal operator. Then, 
    \begin{equation}\label{3.eq:OrthocomplementOfImage}
\text{\rm{Im}}(V)^\perp=\text{\rm{Ker}}(V)
    \end{equation}
and so, 
\begin{equation}\label{3.eq:DirectSumIm+Ker}
\mathcal H=\overline{\text{\rm{Im}}(V)}\oplus\text{\rm{Ker}}(V),
\end{equation}
where $\overline{\text{\rm{Im}}(V)}$ denotes the closure of $\text{\rm{Im}}(V)$ with respect to the norm-topology of $\mathcal H$. 
\end{lemma}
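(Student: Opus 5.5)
The plan is to reduce \eqref{3.eq:OrthocomplementOfImage} to the standard fact that, for any bounded operator $V$, one has $\text{Im}(V)^\perp=\text{Ker}(V^*)$. Normality will then be used only to show $\text{Ker}(V^*)=\text{Ker}(V)$. The decomposition \eqref{3.eq:DirectSumIm+Ker} follows at once from the projection theorem for closed subspaces.

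First, I would verify $\text{Im}(V)^\perp=\text{Ker}(V^*)$. A vector $f$ lies in $\text{Im}(V)^\perp$ if and only if $\langle Vh,f\rangle=0$ for all $h\in\mathcal H$. This holds if and only if $\langle h,V^*f\rangle=0$ for all $h$, that is, if and only if $V^*f=0$.

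Second, I would use normality. For every $f\in\mathcal H$,
$$\|Vf\|^2=\langle V^*Vf,f\rangle=\langle VV^*f,f\rangle=\|V^*f\|^2 .$$
Hence $Vf=0$ if and only if $V^*f=0$, so $\text{Ker}(V)=\text{Ker}(V^*)$. Combining this with the first step gives \eqref{3.eq:OrthocomplementOfImage}.

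Finally, since $\text{Im}(V)^\perp=\overline{\text{Im}(V)}^\perp$, the orthogonal decomposition $\mathcal H=\overline{\text{Im}(V)}\oplus\overline{\text{Im}(V)}^\perp$ yields \eqref{3.eq:DirectSumIm+Ker}. No step is a real obstacle. The only point requiring care is the norm identity $\|Vf\|=\|V^*f\|$, and this is exactly where normality enters. Separability of $\mathcal H$ is not needed for this argument.
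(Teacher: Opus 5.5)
Your proposal is correct and follows essentially the same route as the paper: the norm identity $\|Vf\|=\|V^*f\|$ gives $\text{Ker}(V)=\text{Ker}(V^*)$, the adjoint computation gives $\text{Im}(V)^\perp=\text{Ker}(V^*)$, and the direct sum follows from the projection theorem. Your observation that separability of $\mathcal H$ is not used is also accurate.
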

\begin{proof}
Note that because $V$ is normal, we have that for every $f\in\mathcal H$, 
$$
\|Vf\|^2=\langle Vf,Vf \rangle=\langle V^*Vf,f\rangle=\langle VV^*f,f\rangle=\langle V^*f,V^*f\rangle=\|V^*f\|^2  
$$
and so, 
$$
\text{\rm{Ker}}(V)=\text{\rm{Ker}}(V^*).
    $$
Noting that  \eqref{3.eq:DirectSumIm+Ker} is an immediate consequence of \eqref{3.eq:OrthocomplementOfImage}, it follows that all  we need to show is that $\text{\rm{Im}}(V)^\perp=\text{\rm{Ker}}(V^*)$. To do this, consider 
     $\xi,\eta\in\mathcal H$ with $\eta\in \text{\rm{Ker}}(V^*)$. Noting that $\langle V\xi,\eta\rangle=\langle\xi,V^*\eta\rangle=0$ we see that $\eta\in \text{\rm{Im}}(V)^\perp$. On the other hand, if $\eta\in \text{\rm{Im}}(V)^\perp$, then for every $\xi\in\mathcal H$, $\langle \xi,V^*\eta\rangle=\langle V\xi,\eta\rangle=0$. So, $\eta\in \text{\rm{Ker}}(V^*)$.
\end{proof}
\begin{lemma}\label{3.thm:DecompositionOfHilbertSpace2}
    If $\mathcal H$ is a separable Hilbert space and $\mathcal V\subseteq\mathcal B(\mathcal H)$ is a non-empty family of bounded normal operators, then
    $$\left(\bigcap_{V\in\mathcal V}\text{{\rm Ker}}(V)\right)^\perp = \overline{\text{{\rm Span}}_\mathbb C\left(\bigcup_{V\in\mathcal V}\text{{\rm Im}}(V)\right)}$$
\end{lemma}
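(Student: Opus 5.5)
Write $\mathcal W:=\overline{\text{Span}_\mathbb C\left(\bigcup_{V\in\mathcal V}\text{Im}(V)\right)}$ and $\mathcal K:=\bigcap_{V\in\mathcal V}\text{Ker}(V)$. The plan is to prove that $\mathcal W^\perp=\mathcal K$. Since $\mathcal W$ is a closed subspace of $\mathcal H$, we have $\mathcal W=(\mathcal W^\perp)^\perp$, so this gives $\mathcal K^\perp=\mathcal W$, which is the claim.

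To compute $\mathcal W^\perp$, first note that the orthogonal complement of a set equals the orthogonal complement of the closure of its linear span. Hence
$$\mathcal W^\perp=\left(\bigcup_{V\in\mathcal V}\text{Im}(V)\right)^\perp=\bigcap_{V\in\mathcal V}\text{Im}(V)^\perp .$$
The first equality holds because $\eta\perp S$ implies $\eta\perp\overline{\text{Span}(S)}$, by linearity and continuity of the inner product. The second is immediate from the definition of orthogonal complement. Now apply \eqref{3.eq:OrthocomplementOfImage} of Lemma \ref{3.LemmaDecompositionOfSingleNormal} to each normal $V\in\mathcal V$, which gives $\text{Im}(V)^\perp=\text{Ker}(V)$. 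Substituting, $\mathcal W^\perp=\bigcap_{V\in\mathcal V}\text{Ker}(V)=\mathcal K$.

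There is no real obstacle. The only points needing care are that normality of each $V$ enters solely through Lemma \ref{3.LemmaDecompositionOfSingleNormal}, and that the double-complement identity applies because $\mathcal W$ is closed by definition. Non-emptiness of $\mathcal V$ is not really needed: for empty $\mathcal V$ both sides degenerate consistently, with $\mathcal K=\mathcal H$ and $\mathcal W=\{0\}$. As a consequence we also get the decomposition $\mathcal H=\mathcal W\oplus\mathcal K$ used in the introduction.
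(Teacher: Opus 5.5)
Your proposal is correct and follows essentially the same route as the paper: both use $\text{Im}(V)^\perp=\text{Ker}(V)$ from Lemma \ref{3.LemmaDecompositionOfSingleNormal} to identify $\bigcap_{V\in\mathcal V}\text{Ker}(V)$ with $\bigl(\bigcup_{V\in\mathcal V}\text{Im}(V)\bigr)^\perp$. Both then take orthogonal complements, using that the double complement of a set is its closed linear span. Your remark that non-emptiness of $\mathcal V$ is not actually needed is also correct.
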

\begin{proof}
  By Equation \eqref{3.eq:OrthocomplementOfImage}, we have
  $$
\bigcap_{V\in\mathcal V} \text{{\rm Ker}}(V) = \bigcap_{V\in\mathcal V} \text{{\rm Im}}(V)^\perp = \left(\bigcup_{V\in\mathcal V}\text{{\rm Im}}(V)\right)^\perp.
$$
So, 
$$
\left(\bigcap_{V\in\mathcal V} \text{{\rm Ker}}(V)\right)^\perp=\left(\left(\bigcup_{V\in\mathcal V}\text{{\rm Im}}(V)\right)^\perp\right)^\perp=\overline{\text{{\rm Span}}_\mathbb C\left(\bigcup_{V\in\mathcal V}\text{{\rm Im}}(V)\right)},
$$
as claimed. 
\end{proof}
%%%%%%%%%%%%%%%%%%%%%%%%%%%%%%%%%%%%%%%%%%%%%%%%%%%%%%%%%%%%%%%%%%%%%%%%%%%%%%%
%%%%%%%%%%%%%%%%%%%%%%%%%%%%%%%%%%%%%%%%%%%%%%%%%%%%%%%%%%%%%%%%%%%%%%%%%%%%%%%%%%%%%%%%%%%%%%%%%%%%%%%%%%%%%%%%%%%%%%%%%%%%%%%%%%%%%%%%%%%
\subsection{Ultrafilters and the algebra of \texorpdfstring{$\beta G$}{beta G}}\label{ssec:Ultrafilters}
As we mentioned in the Introduction, the work of Parreau \cite{ParreauFactor} can be combined with some of the results in \cite{AlmostMixingOfAllOrders} and the language of limits along ultrafilters to obtain interesting results related not only with the notion of  strong mixing  but also with  a wide variety of modes of mixing (which include the classical notions of weak and mild mixing). Our goal in this subsection is to review the necessary background on $\beta G$, which we view as the set of all ultrafilters on the group $G$. 
For a complete treatment of this topic we refer the reader to \cite{HBook}.%We remark in passing that our usage of ultrafilters (or, more precisely, limits along ultrafilters) in this paper is not done  out of simple notational convenience but rather due to mathematical necessity (see  Remark\rz{} for further discussion). 
%%%%%%%%%%%%%%%%%%%%%%%%%%%%%%%%%%%%%%%%%%%%%%%%%%%%%%%%%%%%%%%%%%%%%%%%%%%%%%%%%%%%%%
\subsubsection{The Stone-{\v C}ech compactification of a set and limits along ultrafilters}\label{sssec:UltrafilterBasics}
Let $S$ be a non-empty, countable set. An ultrafilter $p$ on $S$ is a non-empty family of subsets of $S$ with the following properties:
\begin{enumerate}[(i)]
    \item $\emptyset\not\in p$.
    \item If $A\in p$ and $A\subseteq B$, then $B\in p$.
    \item If $A,B\in p$, then $A\cap B\in p$.
    \item If $A,B\subseteq S$ satisfy $A\cup B\in p$, then at least one of $A,B$ belongs to $p$.
\end{enumerate}
Denote the Stone-{\v C}ech compactification of $S$ by $\beta S$.
Identifying every $s\in S$ with the  ultrafilter 
$$
p_s:=\{A\subseteq S\,|\,s\in A\},
$$
one can view $\beta S$ as the set of ultrafilters on $S$ endowed with $\mathcal T$,  the topology generated by the sets of the form $\overline A:=\{p\in\beta S\,|\,A\in p\}$, $A\subseteq S$.
Notice that $\{\overline A\,|\,A\subseteq S\}$ forms a basis of $\mathcal T$ and that for every $A\subseteq S$, one has that $\overline A$ is both closed and open (with respect to $\mathcal T$). 
The set $\{p_s\,|\,s\in S\}$ is called the set of principal ultrafilters on $S$. 
We denote the set of non-principal ultrafilters by $S^*$ (so, $S^*:=\beta S\setminus\{p_s\,|\,s\in S\}$).\\
Consider now a topological space $(X,\tau)$, an $x\in X$,  and a sequence $(x_s)_{s\in S}$ in $X$. For any $p\in\beta S$, we write 
$$
\plimgG{p}{s}{S}x_s=x
$$
if for every $U\in\tau$ with $x\in U$, $\{s\in S\,|\,x_s\in U\}\in p$.
We remark that when $(X, \tau)$ is a compact Hausdorff topological space, $\plimgG{p}{s}{S}x_s$ always exists and is unique.
Furthermore, the compactness of $\beta S$ implies that for any $x\in \overline{\{x_s\,|\,s\in S\}^{\tau}}$, there is a $p\in \beta S$ with $\plimgG{p}{s}{S}x_s=x$.
Indeed, 
$$
\bigcap_{U\in\tau\text{ with }x\in U} \{p\in\beta S\,|\,\{s\in G\,|\,x_s\in U\}\in p\}\neq \emptyset.
$$

\subsubsection{The algebra of \texorpdfstring{$\beta G$}{beta G}}\label{ssec:AlgebraOfBetaG}

Let $(G,+)$ be a countably infinite abelian group. 
The group operation $+$ on $G$ extends naturally to $\beta G$ by the rule 
\begin{equation}\label{2.SumFormula}
A\in p+q\iff \{g\in G\,|\,-g+A\in q\}\in p,
\end{equation}
where $-g+A=\{h\in G\,|g+h\in A\}$. We remark that $(\beta G,+)$ becomes a compact Hausdorff right topological semigroup (the last condition means that for any $q\in \beta G$, the map $\rho_q(p):=p+q$ is continuous with respect to $\mathcal T$).\\
The following result connects the arithmetic properties of $\beta G$ with the concept of $p$-limit. We omit the proof.
For any $p\in\beta G$, we denote the ultrafitler 
$$
\{A\subseteq G\,|\,-A\in p\}
$$
by $-p$ (here $-A=\{g\in G\,|\,-g\in A\}$).
\begin{proposition}
    Let $p,q,r\in\beta G$ be ultrafilters. The following statements hold:
    \begin{enumerate}
        \item (Cf. Theorem 4.5 in \cite{HBook} and Theorem 3.8 in \cite{ERTAnUpdate}) $r=p+q$ if and only if for every compact metric space $X$ and every sequence $(x_g)_{g\in G}$ in $X$,
        $$
\plimgG{r}{g}{G}x_g=\plimgG{p}{h}{G}\plimgG{q}{g}{G}x_{h+g}.
        $$
        \item $r=-p$ if and only if  for every compact metric space $X$ and every sequence $(x_g)_{g\in G}$ in $X$,
        $$
\plimgG{r}{g}{G}x_n=\plimgG{p}{g}{G}x_{-g}.
        $$
    \end{enumerate}
\end{proposition}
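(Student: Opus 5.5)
Both parts will be proved by reducing $\beta G$-limits to iterated $G$-limits via the defining formulas for $p+q$ and $-p$, and then using the fact that a $p$-limit of a sequence in a compact Hausdorff space is determined by the ultrafilter on basic neighbourhoods.

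For (1), forward direction: assume $r=p+q$. Let $(x_g)$ be a sequence in a compact metric space $X$. Put $y_h:=\plimgG{q}{g}{G}x_{h+g}$ and $x:=\plimgG{p}{h}{G}y_h$; both exist by compactness. Fix an open $U\ni x$ and choose an open $V$ with $x\in V\subseteq\overline V\subseteq U$. Let $A:=\{g\in G\,|\,x_g\in U\}$.
\begin{itemize}
\item The set $B:=\{h\,|\,y_h\in V\}$ belongs to $p$.
\item For $h\in B$, since $y_h\in V\subseteq U$ and $U$ is open, the set $\{g\,|\,x_{h+g}\in U\}=-h+A$ belongs to $q$.
\end{itemize}
Hence $\{h\,|\,-h+A\in q\}\supseteq B$ lies in $p$, so $A\in p+q=r$ by \eqref{2.SumFormula}. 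This shows $\plimgG{r}{g}{G}x_g=x$.

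For the converse of (1), take $X=\{0,1\}$ and $x_g=\mathbbm 1_A(g)$ for an arbitrary $A\subseteq G$. Then $\plimgG{r}{g}{G}x_g=1$ iff $A\in r$. The inner limit is $\plimgG{q}{g}{G}\mathbbm 1_A(h+g)=1$ iff $-h+A\in q$, and the outer limit equals $1$ iff $\{h\,|\,-h+A\in q\}\in p$, that is, iff $A\in p+q$. So $r$ and $p+q$ contain the same sets and therefore coincide.

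Part (2) is identical and easier. The set $\{g\,|\,x_{-g}\in U\}$ equals $-\{g\,|\,x_g\in U\}$, so it lies in $p$ iff $\{g\,|\,x_g\in U\}\in -p$. For the converse, the indicator test $x_g=\mathbbm 1_A(g)$ again shows that $r$ and $-p$ contain the same sets.

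The only delicate step is in the forward direction of (1): we pass from $y_h\in V$ to a $q$-large set of $g$ with $x_{h+g}\in U$. This works only because the inner limit $y_h$ lies in the open set $U$, which is why we shrink to $V$ with $\overline V\subseteq U$. Openness of $U$ already suffices for this step, so the argument goes through in any compact Hausdorff space, not just compact metric ones.
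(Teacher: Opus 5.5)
Your proof is correct. The paper omits the proof and defers to \cite[Theorem 4.5]{HBook} and \cite[Theorem 3.8]{ERTAnUpdate}, and your argument is the standard one behind those results: a neighbourhood chase through \eqref{2.SumFormula} (respectively the definition of $-p$) for the forward implications, and the test sequence $x_g=\mathbbm 1_A(g)$ in $X=\{0,1\}$ to show that $r$ and $p+q$ (respectively $-p$) contain the same sets.

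Shrinking $U$ to $V$ with $\overline V\subseteq U$ is unnecessary, so your closing remark contradicts itself when it calls this step essential. You can take $B=\{h\,|\,y_h\in U\}$ directly. This is also why the argument works verbatim in any compact Hausdorff space.
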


\begin{remark}
    Let $(g_k)_{k = 1}^\infty \subseteq G$ be an injective sequence, and let $\mathcal{U} \subseteq G^*$ be the collection of ultrafilters $p$ containing $\{g_k\}_{k = 1}^\infty$.
    If $X$ is a compact metric space and $(x_g)_{g \in G} \subseteq X$, then for $x \in X$ we have $\displaystyle\lim_{g_k\rightarrow\infty}x_{g_k} = x$ if and only if $p-\lim_gx_g = x$ for every $p \in \mathcal{U}$.
\end{remark}
%%%%%%%%%%%%%%%%%%%%%%%%%%%%%%%%%%%%%%%%%%%%%%%%%%%%%%%%%%%%%%%%%%%%%%
%%%%%%%%%%%%%%%%%%%%%%%
\subsubsection{Unitary representations and \texorpdfstring{$p$}{p}-limits}\label{sssec:UltrafiltersAndUnitaryReps}
Let $(G,+)$ be a countably infinite abelian group, let $\mathcal H$ denote a separable Hilbert space, let $U=(U^g)_{g\in G}$ be a unitary action of $G$ on $\mathcal H$ by unitary maps, and let $\mathcal B(\mathcal H)$ be the set of bounded, linear operators defined on $\mathcal H$. Observe that when endowed with the weak operator topology, which we denote by $\sigma$, any bounded subset of  $\mathcal B(\mathcal H)$ is pre-compact. So, in particular, the closure of $\{U^g\,|\,g\in G\}$ with respect to the weak operator topology, $\overline{\{U^g\,|\,g\in G\}}^\sigma$, is compact. Furthermore, since for any $V,W\in \overline{\{U^g\,|\,g\in G\}}^\sigma$ one can find $p,q\in\beta G$ with $\plimgG{p}{g}{G}U^g=V$ and $\plimgG{q}{g}{G}U^g=W$ (where both limits are taken with respect to the weak operator topology), we see that 
\begin{equation}\label{eq:CommutingPlimits}
VW=\plimgG{p}{h}{G}U^hW=\plimgG{p}{h}{G}\plimgG{q}{g}{G}U^hU^g=\plimgG{p}{h}{G}\plimgG{q}{g}{G}U^gU^h=\plimgG{p}{h}{G}WU^h=WV
\end{equation}
and
\begin{equation}\label{2.eq:SumIsComposition}
    VW=\plimgG{p}{h}{G}\plimgG{q}{g}{G}U^{h+g}=\plimgG{(p+q)}{g}{G}U^g.
\end{equation}
So, $\overline{\{U^g\,|\,g\in G\}}^\sigma$ is a commutative subsemigroup of $\mathcal B(\mathcal H)$. For any $p\in\beta G$, we will set 
$$
U^p:=\plimgG{p}{g}{G}U^g,
$$
where, as before, the limit is taken with respect to the weak operator topology.\\
The next result (or, rather, one of its variants) was originally proved in the course of the proof of (iv)$\implies$(i) of Theorem 3.1 in \cite{AlmostMixingOfAllOrders}. We omit the proof.
\begin{proposition}\label{2.prop:NormalOperator}
    Let $(G,+)$ be a countably infinite abelian group, let $\mathcal H$ be a separable Hilbert space, let $(U^g)_{g\in G}$ be a unitary action of $G$ on $\mathcal H$, and let $V\in \mathcal B(\mathcal H)$ be a normal operator (i.e. $V^*V=VV^*$). The following statements hold:
    \begin{enumerate}[(i)]
        \item For any $p\in\beta G$, $U^{-p}=(U^p)^*$.
        \item For any $n\in\N$, 
        $$\text{{\rm Ker}}(V^*)=\text{{\rm Ker}}(V)=\text{{\rm Ker}}(V^n).$$
        So, in particular, for any non-negative integers $n,m$ with $n+m\geq 1$ and any $p\in\beta G$, 
        $$
\text{{\rm Ker}}((U^p)^n(U^{-p})^m)=\text{{\rm Ker}}(U^p).
        $$
    \end{enumerate}
\end{proposition}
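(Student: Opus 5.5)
For part (i), I would work directly with the defining property of $U^p$ as a weak-operator limit. Fix $f,f'\in\mathcal H$ and compute
$$\langle f,U^{-p}f'\rangle=\plimgG{p}{g}{G}\langle f,U^{-g}f'\rangle .$$
This step uses part 2 of the earlier proposition relating $-p$ to $p$-limits, applied to the bounded sequence $\langle f,U^{g}f'\rangle$, which takes values in a compact disc of $\mathbb C$. Since each $U^g$ is unitary, $U^{-g}=(U^g)^*$. Hence
$$\langle f,U^{-g}f'\rangle=\langle U^gf,f'\rangle=\overline{\langle f',U^gf\rangle}.$$
Complex conjugation is continuous, so it commutes with $p$-limits. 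This gives
$$\langle f,U^{-p}f'\rangle=\overline{\langle f',U^pf\rangle}=\langle U^pf,f'\rangle=\langle f,(U^p)^*f'\rangle .$$
Because $f,f'$ are arbitrary, $U^{-p}=(U^p)^*$.

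For part (ii), the equality $\mathrm{Ker}(V)=\mathrm{Ker}(V^*)$ is already contained in the proof of Lemma \ref{3.LemmaDecompositionOfSingleNormal}, via $\|Vf\|=\|V^*f\|$. For $\mathrm{Ker}(V^n)=\mathrm{Ker}(V)$, the inclusion $\subseteq$ is the only nontrivial direction. I would first treat $n=2^k$ by induction on $k$, using that $V^m$ is normal for every $m$. If $V^{2m}f=0$, then $V^mf\in\mathrm{Ker}(V^m)=\mathrm{Ker}((V^m)^*)$, so
$$0=\langle (V^m)^*V^mf,f\rangle=\|V^mf\|^2 ,$$
that is, $V^mf=0$. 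For a general $n$, I would choose $2^k\ge n$. Then $V^nf=0$ implies $V^{2^k}f=0$, and the dyadic case gives $Vf=0$.

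For the consequence about $(U^p)^n(U^{-p})^m$, set $W=U^p$. By (i), $U^{-p}=W^*$. By \eqref{eq:CommutingPlimits}, $U^p$ and $U^{-p}$ commute, so $W$ is normal, and so is $A:=W^nW^{*m}$. I would then compute
$$A^*A=W^{*n}W^{m}W^nW^{*m}=(W^*W)^{n+m},$$
using commutativity. This gives $\mathrm{Ker}(A)=\mathrm{Ker}(A^*A)=\mathrm{Ker}((W^*W)^{n+m})$. The operator $W^*W$ is normal (indeed self-adjoint), so the power statement just proved yields $\mathrm{Ker}((W^*W)^{n+m})=\mathrm{Ker}(W^*W)=\mathrm{Ker}(W)$; this uses $n+m\ge1$.

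I expect the only delicate points to be the ones just used: that $U^p$ really is normal, which needs both (i) and the commutativity \eqref{eq:CommutingPlimits}, and that positive powers of a normal operator are normal. Everything else is routine.
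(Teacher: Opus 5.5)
Your proof is correct and complete. Part (i) follows exactly as you compute from the definition of $U^{-p}$ as a $(-p)$-limit. In (ii), the kernel chain $\mathrm{Ker}(V^{2m})\subseteq\mathrm{Ker}(V^m)$ is valid, and so is the identity $A^*A=(W^*W)^{n+m}$, which holds because $U^p$ and $U^{-p}=(U^p)^*$ commute; together they give the stated consequence.

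The paper omits this proof. Your argument is the standard way to fill it in, and it reuses $\|Vf\|=\|V^*f\|$ from the proof of Lemma~\ref{3.LemmaDecompositionOfSingleNormal} just as the paper does there.
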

\begin{remark}
    Note that Proposition \ref{2.prop:NormalOperator} item (i) (in combination with \eqref{eq:CommutingPlimits}) implies that for every $p\in\beta G$ the operator $U^p$ is normal. Thus, by invoking Lemma \ref{3.thm:DecompositionOfHilbertSpace2} and the fact that $\{U^p\,|\,p\in\beta G\}$ is a commutative semigroup, one has that for any non-empty family of ultrafilters $\mathcal U\subseteq \beta G$, one has that 
    $$f\in \overline{\text{Span}_{\mathbb C}\left(\bigcup_{p\in\mathcal U}\text{Im}(U^p)\right)}$$
    if and only if there are sequences $(\xi_k)_{k\in\N}$ in $\mathcal H$ and $(p_k)_{k\in\N}$ in $\mathcal U$ such that $f=\sum_{k=1}^\infty\xi_k$ where for  each $k\in\N$, 
    $$\xi_k\in \bigcap_{j=0}^{k-1}\text{Ker}(U^{p_j})\cap\overline{\text{Im}(U^{p_k}) }$$
     (we take $U^{p_0}=\text{Id}$). We remark that the sequences $(\xi_k)_{k\in\N}$ and $(p_k)_{k\in\N}$ may not be unique. 
\end{remark}
%%%%%%%%%%%%%%%%%%%%%%%%%%%%%%%%%%%%%%%%%%%%%%%%%%%%%%%%%%%%%%%%%%%%%%%%%%%%%%%%%%%%%%%%%%%%%
%%%%%%%%%%%%%%%%%%%%%%%%%%%%%%%%%%%%%%%%%%%%%%%%%%%%%%%%%%%%%%%%%%%%%%%%%%%%%%%%%%%%%%%%%%%%%%%%%%%%%%%%%%%%%%%%
\section{Modes of mixing  associated with a family of ultrafilters}\label{sec:SpectralMixing}
Let $(G,+)$ be a countably infinite abelian group, let $\mathcal H$ denote a separable Hilbert space, let $U=(U^g)_{g\in G}$ be a unitary representation of $G$ on $\mathcal H$, and let $\mathcal U\subseteq G^*=\beta G\setminus G$ be non-empty. In this section we discuss an ergodic-theoretical phenomenon associated with the pair $(U,\mathcal U)$. Namely, we introduce and describe the basic properties of the notion of $\mathcal U$-mixing. As we mentioned in the introduction, this concept is behind many classical dichotomies such as compact-weak mixing and rigid-mild mixing. We remark that the concept of $\mathcal U$-mixing is closely related with many of the results presented in 
\cite{MixingViaFatIntersections}, \cite{KuangYeMixingViaFamilies}, \cite{AlmostMixingOfAllOrders}. It is also worth mentioning that the concept of $\mathcal U$-mixing was introduced for subsemigroups of $\Z^*$ in \cite{StronglyMixingPET}.
%%%%%%%%%%%%%%%%%%%%%%%%%%%%%%%%%%%%%%%%%%%%%%%%%%%%%%%%%%%%%%%%%
\subsection{The definition of \texorpdfstring{$\mathcal U$}{\mathcal U}-mixing and an elementary observation}
We say that the pair $(\mathcal H,(U^g)_{g\in G})$ is $\mathcal U$-mixing (or simply that one of the members of the pair is $\mathcal U$ mixing when the other member of the pair is clear from the context) if for every $f\in \mathcal H$ and every $p\in\mathcal U$, $U^pf=0$, meaning that for every $f'\in\mathcal H$, 
$$
\plimgG{p}{g}{G}\langle f',U^gf\rangle=0.
$$
The following proposition, which follows from item (ii) in Proposition \ref{2.prop:NormalOperator}, shows that one can always find various families of ultrafilters $\mathcal V\subseteq G^*$ for which every pair of the form $(\mathcal H,U)$ is $\mathcal U$-mixing if and only if  $(\mathcal H,U)$ is $\mathcal V$-mixing (see Remark \ref{rem:NonUniqueFamilyForUmixing} for the explanation). 
\begin{proposition}\label{prop:DifferentFamiliesForSameUmixing}
    Let $(G,+)$ be a countably infinite abelian group and let $\mathcal U\subseteq G^*$ be non-empty. Consider the following families of ultrafilters associated with $\mathcal U$: 
    \begin{itemize}
         \item For each $\ell\in\N$, $\ell\mathcal U:=\{p_1+\cdots+p_\ell\,|\,p_1,...,p_\ell\in\mathcal U\}$.
         \item $\Delta(\mathcal U):=\{-p+p\,|\,p\in\mathcal U\}$.
        \item $\langle \mathcal U\rangle$ denote the semigroup generated by $\mathcal U$. In other words, 
        $\langle U\rangle:=\bigcup_{\ell\in\N}\ell\mathcal U$.
        \item $\langle \mathcal U\rangle_\Delta:=\{\xi_1p_1+\cdots+\xi_\ell p_\ell\,|\,\ell\in\N,\,\xi_1,...,\xi_\ell\in\{-1,1\},\text{ and }p_1,...,p_\ell\in\mathcal U\}$, where we adopt the convention that for any $p\in\beta G$, $1p=p$ and $(-1)p=-p$.
    \end{itemize}
    The following are equivalent for a pair $(\mathcal H,U)$: 
    \begin{enumerate}[(i)]
        \item $(\mathcal H,U)$  is $\mathcal U$-mixing,

        \item $(\mathcal H,U)$ is $\overline{(\beta G+\langle\mathcal U\rangle_\Delta+\beta G)}$-mixing,

        \item $(\mathcal H,U)$ is $\langle \mathcal U\rangle_\Delta$-mixing, 
        
        \item $(\mathcal H,U)$ is $\langle \mathcal U\rangle$-mixing,

        \item For each $\ell>1$, $(\mathcal H,U)$ is $\ell\mathcal U$-mixing, 

        \item For some $\ell>1$, $(\mathcal H,U)$ is $\ell\mathcal U$-mixing, 
        
        \item $(\mathcal H,U)$ is $\Delta(\mathcal U)$-mixing. 
    \end{enumerate}
\end{proposition}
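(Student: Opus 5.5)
The strategy is to establish the chain of inclusions
$$\mathcal U\subseteq \overline{(\beta G+\langle\mathcal U\rangle_\Delta+\beta G)}\supseteq \langle\mathcal U\rangle_\Delta\supseteq\langle\mathcal U\rangle\supseteq \ell\mathcal U,$$
together with $\Delta(\mathcal U)\subseteq\langle\mathcal U\rangle_\Delta$. These containments make the implications (ii)$\Rightarrow$(iii)$\Rightarrow$(iv)$\Rightarrow$(v)$\Rightarrow$(vi), (iii)$\Rightarrow$(vii), and (ii)$\Rightarrow$(i) trivial. Mixing along a larger family is a stronger condition.

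It then remains to prove three implications: (i)$\Rightarrow$(ii), (vi)$\Rightarrow$(i), and (vii)$\Rightarrow$(i). Throughout we use that each $U^p$ is normal, that $U^{p+q}=U^pU^q=U^qU^p$ by \eqref{2.eq:SumIsComposition} and \eqref{eq:CommutingPlimits}, and that $U^{-p}=(U^p)^*$ by Proposition \ref{2.prop:NormalOperator}(i).

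\textbf{(i)$\Rightarrow$(ii).} Assume $U^p=0$ for all $p\in\mathcal U$. Any $r\in\langle\mathcal U\rangle_\Delta$ has the form $\xi_1p_1+\cdots+\xi_\ell p_\ell$, so $U^r=U^{\xi_1p_1}\cdots U^{\xi_\ell p_\ell}$. The factor $U^{\xi_1 p_1}$ is either $U^{p_1}=0$ or $(U^{p_1})^*=0$, so $U^r=0$. For $q,q'\in\beta G$ we get $U^{q+r+q'}=U^qU^rU^{q'}=0$. To pass to the closure, note that the map $s\mapsto \langle f',U^sf\rangle$ is the continuous extension of $g\mapsto\langle f',U^gf\rangle$ to $\beta G$. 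It therefore vanishes on the closure of any set on which it vanishes, so $U^s=0$ for every $s$ in the closure.

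\textbf{(vi)$\Rightarrow$(i).} Fix $\ell>1$ with $U^s=0$ for all $s\in\ell\mathcal U$. Given $p\in\mathcal U$, take $s=p+\cdots+p$ ($\ell$ times), which lies in $\ell\mathcal U$. Then $(U^p)^\ell=U^s=0$, so $\mathrm{Ker}((U^p)^\ell)=\mathcal H$. By Proposition \ref{2.prop:NormalOperator}(ii), $\mathrm{Ker}(U^p)=\mathrm{Ker}((U^p)^\ell)=\mathcal H$, that is, $U^p=0$.

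\textbf{(vii)$\Rightarrow$(i).} Here $U^{-p+p}=(U^p)^*U^p=0$. Proposition \ref{2.prop:NormalOperator}(ii) with $n=m=1$ gives $\mathrm{Ker}(U^p)=\mathrm{Ker}(U^{-p}U^p)=\mathcal H$, so again $U^p=0$. Alternatively, one can use $\|U^pf\|^2=\langle (U^p)^*U^pf,f\rangle=0$ directly.

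\textbf{Main point requiring care.} The only subtle step is the closure in (ii). We need every element of $\overline{(\beta G+\langle\mathcal U\rangle_\Delta+\beta G)}$ to give a weak-operator limit $U^s$ that is continuous in $s$ along $\beta G$. This holds by the very definition of $U^s$ as the continuous extension of matrix coefficients, with the closure taken in $\beta G$. A secondary check is associativity of sums in $\beta G$, which is needed to write $U^{q+r+q'}$ as a product of operators. This follows from iterating \eqref{2.eq:SumIsComposition}.
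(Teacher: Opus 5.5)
Your proof is correct and follows the paper's argument. You use the same inclusions for the trivial implications, Proposition~\ref{2.prop:NormalOperator}(ii) for (vi)$\Rightarrow$(i) and (vii)$\Rightarrow$(i), and the factorization $U^{q+r+q'}=U^qU^rU^{q'}$ for (i)$\Rightarrow$(ii); your continuity argument for passing to the closure in $\beta G$ also makes explicit a step the paper leaves implicit.
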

\begin{proof}
    Note that for every $\ell\in\N$, 
    $$\ell\mathcal U\subseteq \langle \mathcal U \rangle\subseteq \langle \mathcal U\rangle_\Delta\subseteq \overline{\beta G+\langle U\rangle_\Delta+\beta G}$$
    and so, the implications (ii)$\implies$(iii)$\implies$(iv)$\implies$(v)$\implies$(vi) are all trivial. 
    It remains to show that (i)$\iff$(vi), (i)$\iff$(vii), and (i)$\implies$(ii).\\ 
    (i)$\iff$(vi): By formula \eqref{2.eq:SumIsComposition}, for any $\ell \in \mathbb{N}$ and any $p_1,...,p_\ell\in\mathcal U$, $U^{p_1+\cdots+p_\ell}=\prod_{j=1}^\ell U^{p_j}$ and so, 
    $\bigcap_{j=1}^\ell \text{Ker}(U^{p_j})\subseteq \text{Ker}(U^{p_1+\cdots+p_\ell})$. It follows that 
    $$\bigcap_{p\in\mathcal U}\text{Ker}(U^p)\subseteq \bigcap_{p\in\ell\mathcal U}\text{Ker}(U^p).$$
    On the other hand, let $q\in\mathcal U$ and set $r:=\underbrace{q+\cdots+q}_{\ell\text{ times}}$. By item (ii) in Proposition \ref{2.prop:NormalOperator}, 
    $$\text{Ker}(U^q)=\text{Ker}((U^q)^\ell)=\text{Ker}(U^r).$$ 
    So, 
    $$\bigcap_{p\in\ell\mathcal U}\text{Ker}(U^p)\subseteq \bigcap_{p\in\mathcal U}\text{Ker}(p).$$
    (i)$\iff$(vii): The equivalence of (i) and (vii) follows from item (ii) in Proposition \ref{2.prop:NormalOperator} which establishes that for every $p\in\mathcal U$,
    $$
\text{Ker}(U^p)=\text{Ker}((U^p)^*U^p)=\text{Ker}(U^{-p+p}).
    $$
    (i)$\implies$(ii): Suppose that $(\mathcal H,(U^g)_{g\in G})$ is  $\mathcal U$-mixing. To prove that $(\mathcal H,(U^g)_{g\in G})$ is $\overline{(\beta G+\langle\mathcal U\rangle_\Delta+\beta G)}$-mixing, it is enough to show that for any $q,r\in\beta G$, any $p\in\mathcal U$, any $\xi\in \{-1,1\}$, and any $f\in\mathcal H$, $U^qU^{\xi p}U^rf=0$. Invoking \eqref{eq:CommutingPlimits} and the fact that $\text{Ker}(U^p)=\text{Ker}(U^{-p})$, we see that 
    $$
U^q U^{\xi p}U^rf=U^q(U^{\xi  p}(U^rf))=U^q0=0,
    $$
 completing the proof.
\end{proof}
\begin{remark}\label{rem:NonUniqueFamilyForUmixing}
    Let $\widehat G$ denote the Pontryagin dual of $G$ and let $\chi\in\Gamma$ be a non-trivial character (i.e. there is $g\in G$ with $\chi(g)\neq 1$). Note that for any non-empty $\mathcal U\subseteq G^*$, one has that for every $p\in \Delta(\mathcal U)$, $\plimgG{p}{g}{G}\chi(g)=1$ and that for every $z\in \overline{\chi(G)}$, there is a $p\in \overline{\beta G+\langle \mathcal U\rangle_\Delta +\beta G}$ with $\plimgG{p}{g}{G}\chi(g)=z$. Thus, one always has $\Delta(\mathcal{U}) \subsetneq \overline{\beta G+\langle \mathcal U\rangle_\Delta +\beta G}$. 
\end{remark}
%%%%%%%%%%%%%%%%%%%%%%%%%%%%%%%%%%%%%%%%%%%%%%%%%%%%%%%%%%%%%%%%%%%%%%%%%%%%%%%%%%%%
\subsection{The notion dual to \texorpdfstring{$\mathcal U$}{\mathcal U}-mixing}
Note that the pair $(\mathcal H,(U^g)_{g\in G})$ is $\mathcal U$-mixing if and only if
\begin{equation}\label{HIsKernel}
\mathcal H=\bigcap_{p\in\mathcal U}\text{Ker}(U^p).
\end{equation}
Our goal in this subsection is to provide a "spectral characterization" of all those pairs $(\mathcal H,(U^g)_{g\in G})$ for which the situation complementing \eqref{HIsKernel} holds, that is those pairs  $(\mathcal H,(U^g)_{g\in G})$ for which
\begin{equation}\label{eq:HGeneratedByU}
\mathcal H=\overline{\text{Span}_\mathbb C\left( \bigcup_{p\in\mathcal U}\text{Im}(U^p)\right)}=\left(\bigcap_{p\in\mathcal U}\text{Ker}(U^p)\right)^\perp.
\end{equation}\\ 
We say that a pair $(\mathcal H,(U^g)_{g\in G})$ is \textbf{generated by  $\mathcal U$} if it satisfies \eqref{eq:HGeneratedByU}.

\begin{theorem}\label{0.thm:SpectralDisjointnessTheorem}
    Let $(G,+)$ be a countably infinite abelian group, let $\mathcal H$ be a separable Hilbert space, let $\mathcal U\subseteq G^*$ be non-empty, and let $U=(U^g)_{g\in G}$ be a unitary action of $G$ on $\mathcal H$.  
    The following statements are equivalent:
    \begin{enumerate}[(i)]
     \item $(\mathcal H,U)$ is spectrally disjoint from every $(\mathcal U,U)$-annihilated pair $(\mathcal H',(V^g)_{g\in G})$. (The pair $(\mathcal H',(V^g)_{g\in G})$ is \textbf{$(\mathcal U,U)$-annihilated} if 
       $$\mathcal H'= \overline{\text{{\rm Span}}_{\mathbb C}(\bigcup_{q\in\{r\in\beta G\,|\,U^r=U^p\}}\text{Ker}(V^q))}$$
       for every $p\in\mathcal U$.)
      \item $(\mathcal H,U)$ is spectrally disjoint from every $\mathcal U$-mixing pair $(\mathcal H',(V^g)_{g\in G})$. 
    \item $(\mathcal H, U)$ is generated by $\mathcal U$.
    \end{enumerate}
\end{theorem}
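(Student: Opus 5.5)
The plan is to prove the cycle (iii)$\implies$(i)$\implies$(ii)$\implies$(iii). All three steps rest on one preliminary observation. If $M:\mathcal H\to\mathcal H'$ is bounded and satisfies $MU^g=V^gM$ for all $g\in G$, then $MU^p=V^pM$ for every $p\in\beta G$. To see this, note that for $f\in\mathcal H$ and $h\in\mathcal H'$,
$$\langle MU^pf,h\rangle=\plimgG{p}{g}{G}\langle U^gf,M^*h\rangle=\plimgG{p}{g}{G}\langle V^gMf,h\rangle=\langle V^pMf,h\rangle,$$
which only uses the definition of $U^p$ as a weak operator limit. Taking adjoints of $MU^g=V^gM$ and replacing $g$ by $-g$ shows that $M^*$ is an intertwiner from $(\mathcal H',V)$ to $(\mathcal H,U)$. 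The same argument then gives $M^*V^q=U^qM^*$ for all $q\in\beta G$.

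\textbf{(iii)$\implies$(i).} Let $(\mathcal H',V)$ be $(\mathcal U,U)$-annihilated and let $M:\mathcal H\to\mathcal H'$ be an intertwiner. Fix $p\in\mathcal U$ and $q$ with $U^q=U^p$, and take $h\in\text{Ker}(V^q)$. Then
$$U^pM^*h=U^qM^*h=M^*V^qh=0,$$
so $M^*h\in\text{Ker}(U^p)$. Since $M^*$ is linear and continuous and $\text{Ker}(U^p)$ is a closed subspace, it follows that $M^*\mathcal H'\subseteq\text{Ker}(U^p)$, because $\mathcal H'$ is the closed span of such $h$. This holds for every $p\in\mathcal U$, so $M^*\mathcal H'\subseteq\bigcap_{p\in\mathcal U}\text{Ker}(U^p)$. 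By (iii) and Lemma \ref{3.thm:DecompositionOfHilbertSpace2}, this intersection is $\{0\}$. Hence $M^*=0$, and therefore $M=0$.

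\textbf{(i)$\implies$(ii).} Every $\mathcal U$-mixing pair $(\mathcal H',V)$ is $(\mathcal U,U)$-annihilated. Indeed, for each $p\in\mathcal U$ the index set $\{r\in\beta G\,|\,U^r=U^p\}$ contains $q=p$, and $\text{Ker}(V^p)=\mathcal H'$ by the definition of $\mathcal U$-mixing.

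\textbf{(ii)$\implies$(iii).} Put $\mathcal K:=\bigcap_{p\in\mathcal U}\text{Ker}(U^p)$.
\begin{enumerate}[(a)]
\item $\mathcal K$ is $U^g$-invariant for every $g$. This follows from the commutativity \eqref{eq:CommutingPlimits}: $U^pU^g=U^gU^p$, so $U^g$ maps each $\text{Ker}(U^p)$ into itself.
\item $\mathcal K^\perp$ is also $U^g$-invariant, because each $U^g$ is unitary and $\mathcal K$ is invariant under $U^{-g}=(U^g)^*$.
\item Consequently the orthogonal projection $P:\mathcal H\to\mathcal K$ intertwines $U$ with the restricted representation $(U^g|_{\mathcal K})_{g\in G}$.
\item The pair $(\mathcal K,U|_{\mathcal K})$ is $\mathcal U$-mixing. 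The limits $U^p$ restrict to $\mathcal K$ because $P$ intertwines, and by the observation above $(U|_{\mathcal K})^p=U^p|_{\mathcal K}=0$.
\end{enumerate}
By (ii), $P=0$, so $\mathcal K=\{0\}$. Lemma \ref{3.thm:DecompositionOfHilbertSpace2} then yields \eqref{eq:HGeneratedByU}, which is (iii).

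The only step I expect to need care is the passage of the intertwining relation to $p$-limits, in particular for $M^*$ and for the restriction to $\mathcal K$. This is a routine weak operator topology argument, so no real obstacle is anticipated.
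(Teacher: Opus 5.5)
Your proof is correct and follows essentially the same route as the paper: the same cycle of implications, with your (iii)$\implies$(i) being the adjoint form of the paper's computation $\langle \xi,\mathcal J\eta\rangle=0$ (you push $\text{Ker}(V^q)$ into $\bigcap_{p\in\mathcal U}\text{Ker}(U^p)$ via $M^*$ instead of using $\text{Ker}(V^q)=\text{Ker}(V^{-q})$), and your (ii)$\implies$(iii) projecting onto all of $\bigcap_{p\in\mathcal U}\text{Ker}(U^p)$ where the paper uses the cyclic subspace of a single nonzero vector. Your explicit check that intertwiners pass to $p$-limits fills in a step the paper uses tacitly.
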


\begin{proof}[Proof of \cref{0.thm:SpectralDisjointnessTheorem}]
(i)$\implies$(ii): This implication is trivial.\\
(ii)$\implies$(iii): Suppose that (iii) does not hold. Then, by \cref{3.thm:DecompositionOfHilbertSpace2}, there exists a non-zero $\xi\in\mathcal H$ with $\xi\in \bigcap_{p\in\mathcal U}\text{{\rm Ker}}(U^p)$. Letting 
$$\mathcal H'=\overline{\text{\rm{Span}}_{\mathbb C}(\{U^g\xi\,|\,g\in G\})},$$
we see that $(\mathcal H',U)$ is a $\mathcal U$-mixing Hilbert space.
Since $(\mathcal H',U)$ and $(\mathcal H,U)$ are not spectrally disjoint, we see that (ii) does not hold.\\
(iii)$\implies$(i): Let $(\mathcal H',(V^g)_{g\in G})$ be a $(\mathcal U,U)$-annihilated pair and let $\mathcal J:\mathcal H'\rightarrow \mathcal H$ be an intertwining operator. 
Let $\xi\in\mathcal H$ and $\eta\in \mathcal H'$.
We will show that $\langle \xi,\mathcal J\eta\rangle=0$.\\
To do this, first note that because $\mathcal H$ is $\mathcal U$-generated, we can assume without loss of generality that there is a $p\in\mathcal U$ such that $\xi\in \text{\rm{Im}}(U^p)$. 
Thus, since $(\mathcal H',(V^g)_{g\in G})$ is $(\mathcal U,U)$-annihilated,  we can further assume without loss of generality that there is a $q\in \beta G$ such that $U^q=U^p$ and $\eta\in \text{{\rm Ker}}(V^q)$. 
Note that \cref{2.prop:NormalOperator} item (ii) implies that $\eta\in\text{{\rm Ker}}(V^{-q})$. Thus, noting that  there is a $\xi' \in \mathcal{H}$ with $U^p\xi'=\xi$, we obtain
$$
\langle \xi,\mathcal J\eta\rangle=\langle U^p\xi',\mathcal J \eta\rangle=\langle U^q\xi',\mathcal J\eta\rangle=\langle \xi',\mathcal J(V^{-q}\eta)\rangle=0,
$$
which proves the claim.
\end{proof}

The following is an immediate corollary to Lemma \ref{3.thm:DecompositionOfHilbertSpace2}, \eqref{HIsKernel}, and Theorem \ref{0.thm:SpectralDisjointnessTheorem}. It is needed for the sequel and the proof of Theorem \ref{thm:Disjointness} in the Introduction.
\begin{corollary}\label{cor:HilbertianUmixingChar}
    Let $(G,+)$ be a countably infinite abelian group, let $\mathcal H$ be a separable Hilbert space, let $\mathcal U\subseteq G^*$ be non-empty, and let $U=(U^g)_{g\in G}$ be a unitary action of $G$ on $\mathcal H$. The pair $(\mathcal H,(U^g)_{g\in G})$ is $\mathcal U$-mixing if and only if it is spectrally disjoint from every pair $(\mathcal H',(V^g)_{g\in G})$ generated by $\mathcal U$.
\end{corollary}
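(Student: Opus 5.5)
We prove both directions of Corollary \ref{cor:HilbertianUmixingChar} using the Image--Kernel decomposition of Lemma \ref{3.thm:DecompositionOfHilbertSpace2} together with the spectral disjointness criterion of Theorem \ref{0.thm:SpectralDisjointnessTheorem}, applied with the roles of the two pairs exchanged.

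For the forward direction, suppose $(\mathcal H,U)$ is $\mathcal U$-mixing. Let $(\mathcal H',V)$ be generated by $\mathcal U$, and let $\mathcal J:\mathcal H\to\mathcal H'$ be an intertwining operator. By \eqref{HIsKernel}, every $\eta\in\mathcal H$ satisfies $U^p\eta=0$ for all $p\in\mathcal U$. By Proposition \ref{2.prop:NormalOperator}(ii), also $U^{-p}\eta=0$.

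Intertwining passes to weak-operator $p$-limits, since $\langle \xi,\mathcal J U^g\eta\rangle=\langle \xi, V^g\mathcal J\eta\rangle$ and we can take $p$-limits on both sides. Hence $\mathcal J U^q=V^q\mathcal J$ for every $q\in\beta G$.

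Now fix $\xi=V^p\xi'\in\operatorname{Im}(V^p)$ with $p\in\mathcal U$. Using $(V^p)^*=V^{-p}$, we compute
$$\langle \xi,\mathcal J\eta\rangle=\langle \xi',V^{-p}\mathcal J\eta\rangle=\langle\xi',\mathcal J U^{-p}\eta\rangle=0.$$
Since $\mathcal H'$ is the closed span of such $\xi$, it follows that $\mathcal J\eta=0$, so $\mathcal J=0$. Intertwiners in the other direction are handled by taking adjoints: if $\mathcal J$ intertwines $V$ and $U$, then $\mathcal J^*$ intertwines $U$ and $V$, so $\mathcal J^*=0$ and hence $\mathcal J=0$. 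Alternatively, this direction is simply Theorem \ref{0.thm:SpectralDisjointnessTheorem} (iii)$\Rightarrow$(ii) applied to $(\mathcal H',V)$, using that spectral disjointness is symmetric.

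For the converse, suppose $(\mathcal H,U)$ is not $\mathcal U$-mixing. By Lemma \ref{3.thm:DecompositionOfHilbertSpace2}, the subspace
$$\mathcal H_0:=\overline{\operatorname{Span}_{\mathbb C}\Big(\bigcup_{p\in\mathcal U}\operatorname{Im}(U^p)\Big)}=\Big(\bigcap_{p\in\mathcal U}\operatorname{Ker}(U^p)\Big)^\perp$$
is nonzero.

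The main point is to check that $\mathcal H_0$ is $U$-invariant and that the restricted pair $(\mathcal H_0,U|_{\mathcal H_0})$ is itself generated by $\mathcal U$. Invariance holds because each $U^g$ commutes with $U^p$, by \eqref{eq:CommutingPlimits}. Since each $U^g$ is unitary, the orthogonal complement $K:=\mathcal H_0^\perp=\bigcap_{p\in\mathcal U}\operatorname{Ker}(U^p)$ is invariant as well, and so is $\mathcal H_0$. The restricted operator $U^p|_{\mathcal H_0}$ is the $p$-limit of the restrictions, so it agrees with $U^p$ on $\mathcal H_0$.

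Next, $\mathcal H=\mathcal H_0\oplus K$ and $U^p$ vanishes on $K$. Therefore $\operatorname{Im}(U^p)=U^p(\mathcal H_0)$, and this set lies in $\mathcal H_0$. Hence the closed span of the images of the restricted operators is all of $\mathcal H_0$, which is exactly the statement that $(\mathcal H_0,U|_{\mathcal H_0})$ is generated by $\mathcal U$.

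Finally, the inclusion $\mathcal H_0\hookrightarrow\mathcal H$ is a nonzero intertwining operator. Thus $(\mathcal H,U)$ is not spectrally disjoint from the $\mathcal U$-generated pair $(\mathcal H_0,U|_{\mathcal H_0})$, which proves the converse.
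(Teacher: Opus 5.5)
Your proof is correct and follows the route the paper indicates. The forward direction is Theorem \ref{0.thm:SpectralDisjointnessTheorem} (iii)$\Rightarrow$(ii) applied to the $\mathcal U$-generated pair, combined with the symmetry of spectral disjointness, which you also verify directly. The converse uses the decomposition of Lemma \ref{3.thm:DecompositionOfHilbertSpace2} to obtain the nonzero $\mathcal U$-generated summand $\mathcal H_0$, whose inclusion into $\mathcal H$ is a nonzero intertwiner; you also supply checks the paper leaves implicit, namely that $\mathcal H_0$ is $U$-invariant and that the restricted pair is itself generated by $\mathcal U$.
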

%%%%%%%%%%%%%%%%%%%%%%%%%%%%%%%%%%%%%%%%%%%%%%%%%%%%%%%%%%%%%%%%%%%
\subsection{Connection with the classical notions of mixing}
Our goal in this subsection is to  describe some of the families of ultrafilters which can be used to define (via Corollary \ref{cor:HilbertianUmixingChar})  the following classical notions of mixing: (a) weak mixing,  (b) mild mixing, and (c) strong mixing. 
In other words, we want to find those families of ultrafilters $\mathcal U\subseteq G^*$ with the property that a pair $(\mathcal H,(U^g)_{g\in G})$ possesses a given mode of mixing if and only if it is spectrally disjoint from every pair $(\mathcal H',(V^g)_{g\in G})$ generated by $\mathcal U
$.
%%%%%%%%%%%%%%%%%%%%%%%%%%%%%%%%%%%%%%%%%%%%%%%%%%%%%%%%%%%%%%%%%%%%%%%
\subsubsection{Weak mixing}\label{sssec:WeakMixing}
A unitary action $(U^g)_{g\in G}$ of a countably infinite abelian group $G$ on a Hilbert space $\mathcal H$ is called weakly mixing if for every $f\in \mathcal H$ and every F{\o}lner seqeucne $(F_N)_{N\in\N}$ in $G$, one has that 
\begin{equation}\label{eq:WeakMixingFormula}
\lim_{N\rightarrow\infty}\frac{1}{|F_N|}\sum_{g\in F_N}|\langle U^gf,f\rangle|=0.
\end{equation}
Let $\Phi(G)$ denote the set of all F{\o}lner sequences in $G$. We let $\Lambda(G)$ be the set of all ultrafilters $p\in\beta G$ with the property that for each $A\in p$,  there is a F{\o}lner sequence $(F_N)_{N\in\N}\in \Phi(G)$ such that 
    $$
    \overline d_{(F_N)}(A):=\limsup_{N\rightarrow\infty}\frac{|A\cap F_N|}{|F_N|}.
    $$
The compact-weak mixing decomposition, a form of which appeared for the first time in  \cite{KoopmanVonNeumannContinuousSpectra} (see also \cite[Theorem 3.4, page 96]{Krengel1985ErgodicTheorems} or
\cite[Theorem 2.3]{ERTAnUpdate}), says that we have the decomposition $\mathcal{H} = \mathcal{H}_c\oplus\mathcal{H}_w$ where
\begin{equation*}
    \mathcal{H}_c = \{f \in \mathcal{H}\ |\ \{U^gf\}_{g \in G}\text{ is norm pre-compact}\}
\end{equation*}
and 
\begin{equation*}
    %\mathcal{H}_c = \{f \in \mathcal{H}\ |\ \{U^gf\}_{g \in G}\text{ is norm pre-compact}\}
    \mathcal{H}_w = \{f \in \mathcal{H}\ |\ \forall (F_N)_{N\in\N}\in\Phi(G)\forall f'\in \mathcal H,\,  \lim_{N\rightarrow\infty}\frac{1}{|F_N|}\sum_{g\in F_N}|\langle U^gf,f'\rangle| = 0\}.
\end{equation*}
%and 
%\begin{equation*}
%\mathcal{H}_w = \{f \in \mathcal{H}\ |\ U^pf = 0\text{ for some }p \in G^*\}.
%\end{equation*}
It turns out that $\mathcal{H}_w$ is the largest subspace of $\mathcal{H}$ on which $U$ is weakly mixing, and $\mathcal{H}_c$ is the closed linear hull of the eigenfunctions of $U$.
We mention that the compact-weak mixing decomposition is a special case of the Jacobs-de Leeuw-Glicksberg decomposition (see, for example, \cite[Chapter 16]{OTAoET} and \cite{JdLGDecompositionSurvey}) in which $\mathcal{H}_c$ is known as the reversible part, and $\mathcal{H}_w$ is known as the almost weakly stable part.
It is not hard to deduce from the compact-weak mixing decomposition that the pair $(\mathcal H, (U^g)_{g\in G})$ is weakly mixing if and only if it is spectrally disjoint from every pair $(\mathcal H',(V^g)_{g\in G})$ with the property that for every $f\in\mathcal H'$, the set $\{V^g f\,|\,g\in G\}$ is norm pre-compact in $\mathcal H'$ (\cite[Theorem 1.9]{BerRosWeakMixingActions1988}).
Let $P_c:\mathcal{H}\rightarrow\mathcal{H}_c$ denote the orthogonal projection that induces the compact-weak mixing decomposition.
Bergelson \cite[Corollary 4.6]{MinimalIdempotentsAndERT} showed that if $p \in G^*$ is a minimal idempotent, then $U^p = P_c$.
In  \cite[Theorem 2.25]{ProjectionLemma} it was shown that $U^q = P_c$ whenever  $q$ is an idempotent in $\Lambda(G)$. Moreover, 
\cite[Theorem 5.2]{JdLGDecompositionSurvey} establishes that for every $q\in\Delta(\Lambda(G))$, $U^q$ also equals $P_c$.\footnote{In fact, \cite[Corollary 4.6]{MinimalIdempotentsAndERT} holds for any countably infinite (not necessarily abelian) group, and \cite[Theorem 2.25]{ProjectionLemma} and \cite[Theorem 5.2]{JdLGDecompositionSurvey} are shown in  \cite{JdLGDecompositionSurvey} to hold for any countably infinite amenable group.}
In particular, we see that the compact-weak mixing decomposition is a special case of the Image-Kernel decomposition discussed in Section \ref{ssec:ImageKernelDecomposition}.
The following proposition demonstrates that the notion of $\mathcal U$-mixing coincides with that of weak mixing for various families of ultrafilters. 
\begin{proposition}\label{3.Prop:WeakMixingCharacterizations}
    Let $\mathcal H$ be a Hilbert space and let $(U^g)_{g\in G}$ be a unitary representation of $G$ acting on $\mathcal H$. The pair $(\mathcal H,(U^g)_{g\in G})$ is weakly mixing if and only if it is $\mathcal U$-mixing for any of the following choices of $\mathcal U$:
    \begin{enumerate}
        \item $\mathcal U=\Lambda(G)$.
     \item $\mathcal U=\{p\}$ for any given $p\in\Lambda(G)$.
    \item $\mathcal U=\{p\}$, where $p$ belongs to the  minimal two-sided ideal of $\beta G$.
    \end{enumerate}
\end{proposition}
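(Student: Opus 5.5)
The plan is to reduce everything to the single identity $\text{Ker}(U^q)=\mathcal H_w$ for each relevant ultrafilter $q$. Once that holds for every $q$ in the chosen family $\mathcal U$, we get $\bigcap_{q\in\mathcal U}\text{Ker}(U^q)=\mathcal H_w$. By \eqref{HIsKernel}, $\mathcal U$-mixing then means exactly $\mathcal H=\mathcal H_w$, which is weak mixing by the compact-weak mixing decomposition. So items 1 and 2 follow once the identity is proved for each $p\in\Lambda(G)$, and item 3 once it is proved for each $p$ in the minimal two-sided ideal $K(\beta G)$.

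\textbf{Item 2.} Fix $p\in\Lambda(G)$. By \cref{prop:DifferentFamiliesForSameUmixing} (i)$\iff$(vii), or directly from Proposition \ref{2.prop:NormalOperator}(ii),
\[
\text{Ker}(U^p)=\text{Ker}\big((U^p)^*U^p\big)=\text{Ker}(U^{-p+p}).
\]
Since $-p+p\in\Delta(\Lambda(G))$, the cited \cite[Theorem 5.2]{JdLGDecompositionSurvey} gives $U^{-p+p}=P_c$. Hence $\text{Ker}(U^p)=\text{Ker}(P_c)=\mathcal H_w$. Item 1 is then immediate by intersecting over $p\in\Lambda(G)$, noting $\Lambda(G)\neq\emptyset$: for example, any $q$ containing all sets of positive upper density along a fixed Følner sequence works.

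\textbf{Item 3.} Take $p\in K(\beta G)$. The algebraic structure of $\beta G$ gives a minimal idempotent $e$ with $p=p+e$: choose a minimal left ideal $L$ containing $p$ and an idempotent $e\in L$, so that $e$ is a right identity for $L$. Then $U^p=U^pU^e=U^pP_c$ by \cite[Corollary 4.6]{MinimalIdempotentsAndERT}, and therefore $\mathcal H_w\subseteq\text{Ker}(U^p)$.

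For the reverse inclusion, let $\chi$ be an eigencharacter, $U^gf=\chi(g)f$. Then $U^pf=(\plimgG{p}{g}{G}\chi(g))f$, and this limit has modulus $1$, so $U^p$ is injective on $\mathcal H_c$. Combining the two inclusions, and using that $U^p$ preserves both $\mathcal H_c$ and $\mathcal H_w$ (it commutes with $P_c$ by \eqref{eq:CommutingPlimits}), we get $\text{Ker}(U^p)=\mathcal H_w$.

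An alternative route is to show $K(\beta G)\subseteq\Lambda(G)$ and apply item 2. This holds because minimal ideals consist of ultrafilters all of whose members are piecewise syndetic, hence have positive upper Banach density along some Følner sequence. The direct argument avoids needing this density fact.

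\textbf{Main obstacle.} The delicate point is item 2's reliance on the identity $U^{-p+p}=P_c$ for \emph{every} $p\in\Lambda(G)$, not just idempotents. If that citation is unavailable, I would argue directly. Given $f\in\mathcal H_w$ and $f'$, the set
\[
A_\varepsilon=\{g:|\langle U^gf,f'\rangle|\ge\varepsilon\}
\]
has zero density along every Følner sequence, so $A_\varepsilon\notin p$. Therefore $\plimgG{p}{g}{G}\langle U^gf,f'\rangle=0$, giving $\mathcal H_w\subseteq\text{Ker}(U^p)$. The eigenvector argument above then gives injectivity of $U^p$ on $\mathcal H_c$, yielding equality. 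This direct argument is cleaner, and I would present it as the main proof.
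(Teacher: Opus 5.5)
Your proof is correct. It reaches the result by a somewhat different route from the paper.

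For weak mixing $\Rightarrow$ $\Lambda(G)$-mixing, the paper uses the same density argument you use for $\mathcal H_w\subseteq\text{Ker}(U^p)$. The converse in the paper goes through other representations:
\begin{enumerate}
\item It shows that in any pair $(\mathcal H',V)$ where every vector has a norm-precompact orbit, $\|V^pf\|=\|f\|$. This comes from approximating the orbit along $p$ by a single $V^{h}f$. So such pairs are generated by $\{p\}$.
\item Corollary~\ref{cor:HilbertianUmixingChar} then makes a $\{p\}$-mixing pair spectrally disjoint from every compact pair.
\item The Bergelson--Rosenblatt criterion then gives weak mixing.
\end{enumerate}
The paper reduces item 3 to item 2 via $K(\beta G)\subseteq\Lambda(G)$, which it gets by citing that $\Lambda(G)$ is a closed two-sided ideal.

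You stay inside one representation instead. You combine $\mathcal H=\mathcal H_c\oplus\mathcal H_w$ with the isometry of $U^p$ on $\mathcal H_c$ to get $\text{Ker}(U^p)=\mathcal H_w$ exactly. This avoids spectral disjointness and the Bergelson--Rosenblatt theorem. It also proves directly the remark after the proposition, that the Image--Kernel decomposition of $U^p$ is the compact--weak mixing one. For item 3, you replace the ideal property of $\Lambda(G)$ with two facts: Bergelson's identity $U^e=P_c$ for a minimal idempotent $e$, and that an idempotent in a minimal left ideal is a right identity for that ideal. The paper's route fits its general $\mathcal U$-generated versus $\mathcal U$-mixing duality. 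Yours is shorter and more self-contained once the decomposition is granted.

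Three small points:
\begin{enumerate}
\item Injectivity of $U^p$ on $\mathcal H_c$ needs the orthogonality of eigenspaces for distinct characters. Then $U^p$ acts on $\mathcal H_c$ diagonally by unimodular scalars in an orthogonal decomposition. Unimodular eigenvalues on a merely spanning set would not be enough.
\item You do not need $U^p$ to preserve $\mathcal H_c$ and $\mathcal H_w$. Since $U^pf_w=0$, you already have $U^pf=U^pf_c$.
\item Your witness for $\Lambda(G)\neq\emptyset$ is misphrased. No ultrafilter can contain all sets of positive upper density, because a set and its complement can both have positive upper density. Take instead any ultrafilter extending the filter of sets whose complements have zero upper density along a fixed F{\o}lner sequence. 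Every member of such an ultrafilter has positive upper density, and it is non-principal.
\end{enumerate}
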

\begin{proof}
First assume that $(U^g)_{g\in G}$ is weakly mixing and note that by \eqref{eq:WeakMixingFormula}, one has that for every $f\in\mathcal H$ and every $p\in \Lambda(G)$, $U^p f=0$ weakly and so, $(U^g)_{g\in G}$ is $\Lambda(G)$-mixing (that $U^pf=0$ follows from the definition of $p$-limit and the fact that for any $A\in p$, $\overline d_{(F_N)}(A)>0$).\\
If $(U^g)_{g\in G}$ is $\Lambda(G)$-mixing, then it is $\{p\}$-mixing for any $p\in\Lambda(G)$. Since $\Lambda(G)$ is a closed two-sided ideal \cite[Theorem 2.8]{glasscock2025folner}, we see that every $p$ belonging to the the minimal two-sided ideal of $\beta G$ also belongs to $\Lambda(G)$. \\
All that remains to be shown is  that if $(U^g)_{g\in G}$ is $\{p\}$-mixing for some $p\in \Lambda(G)$, then it is weakly mixing. To do this, we will show that a pair $(\mathcal H',(V^g)_{g\in G})$ is spectrally disjoint from every weakly mixing system if and only if it is generated by $\{p\}$.\\
If the pair $(\mathcal H',(V^g)_{g\in G})$ is generated by $\{p\}$, it is not hard to deduce from \eqref{eq:WeakMixingFormula} that  $(\mathcal H',(V^g)_{g\in G})$  is spectrally disjoint from every weakly mixing system. On the other hand, if $f\in\mathcal H'$ is such that $\{V^gf'\,|\,g\in G\}$ is pre-compact, then for every $\epsilon>0$ there is a non-empty finite subset $F_\epsilon\subseteq G$ such that 
$$
\plimgG{p}{g}{G}\inf_{h\in F_\epsilon}\|V^gf-V^hf\|\leq\epsilon.
$$
By property (iv) in the definition of an ultrafilter, we have that there is an $h_\epsilon \in F_\epsilon$ and an $A_\epsilon\in p$ with the property that for every $g\in A_\epsilon$, $\|V^gf-V^{h_\epsilon}f\|\leq\epsilon$. It follows that  
\begin{multline*}
    \|V^pf-V^{h_\epsilon}f\|^2=\|f\|^2+\|V^pf\|^2-2\Re(\langle V^pf,V^{h_\epsilon}f \rangle )\\
    \leq 2(\|f\|^2-\Re(\langle V^pf,V^{h_\epsilon}f \rangle ))=\plimgG{p}{g}{G}\|V^gf-V^{h_\epsilon}f\|^2\leq \epsilon^2,
\end{multline*}
and so, $\|V^pf\|=\lim_{\epsilon\rightarrow 0^+}\|V^{h_\epsilon}f\|=\|f\|$. Thus, $\text{Ker}(V^p)=\{0\}$ and so, $\mathcal H'$ is generated by $\{p\}$.
\end{proof}
From Proposition \ref{3.Prop:WeakMixingCharacterizations}, we see that for any $p \in \Lambda(G)$, the Image-Kernel decomposition given by $U^p$ is the compact-weak mixing decomposition, even though we do not necessarily have that $U^p = P_c$. The following result  demonstrates that for any $\mathcal U\subseteq G^*$, the notion of $\mathcal U$-mixing implies that of weak mixing. 
\begin{corollary}\label{3.cor:UmixingIsWeakMixing}
For any non-empty $\mathcal U\subseteq G^*$ and any $\mathcal U$-mixing pair $(\mathcal H,(U^g)_{g\in G})$, one  has that $(\mathcal H,(U^g)_{g\in G})$ is weakly mixing.
\end{corollary}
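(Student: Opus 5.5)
The plan is to argue by contraposition, using the compact-weak mixing decomposition $\mathcal H=\mathcal H_c\oplus\mathcal H_w$ recalled above. Suppose $(\mathcal H,(U^g)_{g\in G})$ is $\mathcal U$-mixing but not weakly mixing. Then $\mathcal H_c\neq\{0\}$. Since $\mathcal H_c$ is the closed linear hull of the eigenfunctions of $U$, there is a nonzero $f\in\mathcal H$ and a character $\chi\in\widehat G$ with $U^gf=\chi(g)f$ for all $g\in G$.

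Next I will compute $U^pf$ for an arbitrary $p\in\mathcal U$. The sequence $(\chi(g))_{g\in G}$ lies in the compact unit circle, so $z:=\plimgG{p}{g}{G}\chi(g)$ exists and satisfies $|z|=1$. For every $f'\in\mathcal H$ we then have
$$\langle f',U^pf\rangle=\plimgG{p}{g}{G}\langle f',U^gf\rangle=\plimgG{p}{g}{G}\overline{\chi(g)}\langle f',f\rangle=\bar z\langle f',f\rangle.$$
Hence $U^pf=zf\neq 0$, which contradicts the assumption that $U^pf=0$ for all $p\in\mathcal U$. Only nonemptiness of $\mathcal U$ is needed; the non-principality of $p$ plays no role.

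An alternative route goes through the earlier results. One would apply Corollary \ref{cor:HilbertianUmixingChar}: show that a pair $(\mathcal H',V)$ with $\mathcal H'$ the span of eigenvectors is generated by $\mathcal U$, because every eigenvector is in $\operatorname{Im}(V^p)$ by the computation above. Then $\mathcal U$-mixing pairs are spectrally disjoint from all such compact pairs, and \cite[Theorem 1.9]{BerRosWeakMixingActions1988} gives weak mixing. The direct argument is shorter, so I will use it.

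The only point needing care is the assertion that a non-weakly-mixing unitary action has a genuine eigenvector. This is exactly the statement that $\mathcal H_c$ is spanned by eigenfunctions, which was quoted above for countable abelian groups, so I do not expect a real obstacle.
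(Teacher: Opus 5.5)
Your proof is correct, but it follows a different and more direct route than the paper's.

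The paper's argument is algebraic. By Proposition \ref{prop:DifferentFamiliesForSameUmixing}, a $\mathcal U$-mixing pair is also $\mathcal V$-mixing for the two-sided ideal $\mathcal V=\beta G+\langle\mathcal U\rangle_\Delta+\beta G$. Every two-sided ideal contains the minimal two-sided ideal of $\beta G$, so the pair is $\{p\}$-mixing for $p$ in that minimal ideal. Item 3 of Proposition \ref{3.Prop:WeakMixingCharacterizations} then gives weak mixing. That item itself rests on three further inputs: the minimal ideal lies inside the closed ideal $\Lambda(G)$, a precompact-orbit computation, and the characterization of weak mixing as spectral disjointness from compact pairs.

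You bypass all of the $\beta G$ algebra. You use only two facts. First, a non-weakly-mixing action of an abelian group has a genuine eigenvector, which comes from the compact--weak mixing decomposition. Second, $U^p$ multiplies such an eigenvector by the unimodular scalar $\plimgG{p}{g}{G}\chi(g)$, and this holds for every $p\in\beta G$.

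This is essentially the computation $\|V^pf\|=\|f\|$ inside the paper's proof of Proposition \ref{3.Prop:WeakMixingCharacterizations}, which in fact never uses $p\in\Lambda(G)$. So your argument skips the detour through the minimal ideal. It also gives the slightly stronger statement that no $U^p$ with $p\in\beta G$ annihilates a nonzero vector of $\mathcal H_c$.

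What the paper's route gives in exchange is structural information within its ultrafilter framework. It shows that $\mathcal U$-mixing always implies $\{p\}$-mixing for every $p$ in the minimal ideal, so the comparison between these mixing notions is governed by ideal containment in $\beta G$.
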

\begin{proof}
Set $\mathcal V=\beta G+\langle \mathcal U\rangle_{\Delta}+\beta G$. The equivalence of (i) and (ii) in the statement of Proposition \ref{prop:DifferentFamiliesForSameUmixing} implies that $(\mathcal H,(U^g)_{g\in G})$ is $\mathcal V$-mixing. Since $\mathcal V$ is a  two-sided ideal of $\beta G$, we have that $(\mathcal H,(U^g)_{g\in G})$ is $\{p\}$-mixing for every $p$ belonging to the minimal two sided ideal of $\beta G$. Thus,  by item 3 in Proposition \ref{3.Prop:WeakMixingCharacterizations}, we obtain that $(\mathcal H,(U^g)_{g\in G})$ is weak mixing. 
\end{proof}
%%%%%%%%%%%%%%%%%%%%%%%%%%%%%%%%%%%%%%%%%%%%%%%%%%%%%%%%%%%%%%%%%
\subsubsection{Mildly mixing and rigid actions}\label{sssec:MildMixingRigid}
The notions of  mild mixing and rigidity were first introduced for measure-preserving systems in \cite{Walters1972,TheBirthOfMildMixing}, and they naturally generalize to the setting of unitary representations on Hilbert spaces as follows.
Let $(U^g)_{g\in G}$ be a unitary representation of $G$ acting on a Hilbert space $\mathcal H$. 
For any given sequence $(g_k)_{k\in\N}$ in $G$, we write $\lim_{k\rightarrow\infty}g_k=\infty$ if for every finite $F\subseteq G$, the set $\{k:g_k\in F\}$ is finite.
A vector $f\in\mathcal H$ is called \textbf{rigid} if there exists a sequence $(g_k)_{k\in\N}$ in $G$ with $\lim_{k\rightarrow\infty}g_k=\infty$ and the property that $\lim_{k\rightarrow\infty}U^{g_k}f=f$ weakly. 
Whenever the equality  $\lim_{k\rightarrow\infty}U^{g_k}f=f$ holds, we will say that $f$ is rigid along $(g_k)_{k\in\N}$.
The pair $(\mathcal H, (U^g)_{g\in G})$ is called \textbf{rigid}, if there exists a sequence $(g_k)_{k\in\N}$ in $G$ with $\lim_{k\rightarrow\infty} g_k=\infty$ along which every $f\in\mathcal H$ is rigid.
The pair $(\mathcal H,(U^g)_{g\in G})$ is \textbf{mildly mixing} if the only rigid $f\in \mathcal H$ is $f=0$.
It is known that the pair $(\mathcal H,(U^g)_{g\in G})$ is mildly mixing if and only if it is spectrally disjoint from every rigid pair $(\mathcal H',(V^g)_{g\in G})$ \cite[Theorem 0.4]{PolynomialActionsOfUnitaryOperators}.
We will let $E(G^*):=\{p\in G^*\,|\,p+p=p\}$ denote the set of idempotent nonprinciple ultrafilters. 
\begin{proposition}\label{3.prop:CharMildMixing}
    Let $(G,+)$ be a countably infinite abelian group and let $(U^g)_{g\in G}$ be a unitary representation of $G$ acting on the Hilbert space $\mathcal H$. The pair $(\mathcal H,(U^g)_{g\in G})$ is mildly mxiing if and only if it is $E(G^*)$-mixing.  
\end{proposition}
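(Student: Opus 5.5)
Both directions go through the Image-Kernel description of $\mathcal U$-mixing: the pair is $E(G^*)$-mixing exactly when $U^pf=0$ for every $f\in\mathcal H$ and every $p\in E(G^*)$. The plan is to link idempotents in $G^*$ with rigidity sequences via two standard facts from the algebra of $\beta G$.

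\emph{Mildly mixing implies $E(G^*)$-mixing.} Fix $p\in E(G^*)$. By \eqref{2.eq:SumIsComposition}, $U^p=U^{p+p}=(U^p)^2$, so $U^p$ is idempotent. By Proposition \ref{2.prop:NormalOperator}(i) and \eqref{eq:CommutingPlimits}, $U^p$ is normal. A normal idempotent is an orthogonal projection, since $\text{Ker}(U^p)=\text{Im}(U^p)^\perp$ by Lemma \ref{3.LemmaDecompositionOfSingleNormal}.

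Now take $f=U^pf$, that is, $f\in\text{Im}(U^p)$. We claim $f$ is rigid. From $\langle U^pf,f\rangle=\|f\|^2$ and the definition of the $p$-limit, for every $\epsilon>0$ we have $A_\epsilon:=\{g:\Re\langle U^gf,f\rangle>\|f\|^2-\epsilon\}\in p$. This gives $\|U^gf-f\|^2<2\epsilon$ for $g\in A_\epsilon$. Since $p$ is non-principal, each $A_\epsilon$ is infinite. Diagonally choosing distinct $g_k\in A_{1/k}$ gives $g_k\to\infty$ with $U^{g_k}f\to f$ in norm, hence weakly. So $f$ is rigid, and mild mixing forces $f=0$. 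Thus $\text{Im}(U^p)=0$, i.e.\ $U^p=0$ on $\mathcal H$.

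\emph{$E(G^*)$-mixing implies mildly mixing.} This is the step I expect to be the main obstacle. Suppose $0\ne f$ is rigid along $g_k\to\infty$, say $U^{g_k}f\to f$ weakly. It suffices to produce an idempotent $p\in G^*$ with $U^pf\neq 0$.

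First upgrade weak convergence to norm convergence. Since $\|U^{g_k}f\|=\|f\|$, we get $\|U^{g_k}f-f\|^2=2\|f\|^2-2\Re\langle U^{g_k}f,f\rangle\to0$. Pass to a subsequence with $\sum_k\|U^{g_k}f-f\|<\|f\|/2$ and with distinct finite sums. Then every finite sum $h=g_{k_1}+\dots+g_{k_m}$ ($k_1<\dots<k_m$) satisfies $\|U^hf-f\|\le\sum_i\|U^{g_{k_i}}f-f\|<\|f\|/2$, by telescoping and unitarity. To arrange distinct finite sums, choose $g_k$ inductively, avoiding the finitely many values that would produce coincidences; this is possible because $g_k\to\infty$.

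Next, the set $FS(g_k)_{k\ge n}$ of finite sums with indices at least $n$ decreases in $n$. By the standard Galvin--Glazer construction (\cite[Lemma 5.11]{HBook}), $\bigcap_n\overline{FS(g_k)_{k\ge n}}$ is a compact subsemigroup of $\beta G$. By Ellis' lemma it contains an idempotent $p$. This $p$ is non-principal because $p$ contains tails of distinct sums, which are infinite sets. Since $FS(g_k)\in p$, we get $\Re\langle U^pf,f\rangle\ge\|f\|^2-\|f\|\cdot\|f\|/2>0$. Hence $U^pf\ne0$, contradicting $E(G^*)$-mixing.

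The delicate points are the semigroup property of the intersection, where the tail structure is needed, and non-principality. Both are standard, so I would cite \cite{HBook} rather than reprove them.
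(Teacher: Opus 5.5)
Your proof is correct, but it takes a different route from the paper's. The paper argues through spectral disjointness. It combines Corollary \ref{cor:HilbertianUmixingChar} with Lemmas \ref{3.lem:SortOfEllisLemma} and \ref{3.lem:IdempotentLimit}, and with the characterization of mild mixing as spectral disjointness from rigid pairs \cite[Theorem 0.4]{PolynomialActionsOfUnitaryOperators}. The link is that for $p\in E(G^*)$ the operator $U^p$ is an orthogonal projection. Hence $\{p\}$-generated pairs are exactly those on which $U^p$ acts as the identity, and every rigid vector is fixed by some such $U^p$.

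You instead argue vector by vector. In the first direction you show that $\text{Im}(U^p)=\{f : U^pf=f\}$ consists of rigid vectors; idempotency of $U^p$ already suffices here, and normality is not needed. In the second direction you produce, for a nonzero rigid $f$, an idempotent $p\in G^*$ with $U^pf\neq 0$. This gives a self-contained proof that does not depend on the external spectral characterization of mild mixing.

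The price is that your second direction is more laborious than necessary. You do not need to upgrade to norm convergence, arrange distinct finite sums, and invoke the Galvin--Glazer semigroup $\bigcap_n\overline{FS(g_k)_{k\ge n}}$. As in Lemma \ref{3.lem:SortOfEllisLemma}, the set $\{q\in G^* : U^qf=f\}$ is a closed subsemigroup of $\beta G$. It is closed since $q\mapsto U^qf$ is weakly continuous, and it is a semigroup by \eqref{2.eq:SumIsComposition}. It is nonempty because any nonprincipal ultrafilter containing every tail $\{g_k : k\ge n\}$ belongs to it. The Ellis--Numakura lemma then gives $p\in E(G^*)$ with $U^pf=f$ directly, with non-principality built in.
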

Our proof of Proposition \ref{3.prop:CharMildMixing} makes use of the following two lemmas.
\begin{lemma}[Cf. Proposition 1.7 in \cite{PolynomialActionsOfUnitaryOperators}]\label{3.lem:SortOfEllisLemma}
 Let $(G,+)$ be a countably infinite abelian group, let $(U^g)_{g\in G}$ be a unitary representation of $G$ acting on the Hilbert space $\mathcal H$, and let $(g_k)_{k\in\N}$ be a sequence in $G$ satisfying $\lim_{k\rightarrow\infty}g_k=\infty$. 
    Suppose that there is an $f\in\mathcal H$ which is rigid along $(g_k)_{k\in\N}$.
    Then, there is a $p\in E(G^*)$ such that $U^pf=f$.
\end{lemma}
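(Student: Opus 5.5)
We need to prove: if $f$ is rigid along $(g_k)$ with $g_k\to\infty$, there is an idempotent $p\in G^*$ with $U^pf=f$.

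The plan is to use the closed subsemigroup trick (Ellis–Numakura). First, pass to a subsequence along which rigidity holds in norm rather than weakly. Since each $U^{g_k}$ is unitary, $\|U^{g_k}f\|=\|f\|$. Weak convergence $U^{g_k}f\to f$ together with equality of norms gives norm convergence, because $\|U^{g_k}f-f\|^2=2\|f\|^2-2\Re\langle U^{g_k}f,f\rangle\to0$. Passing to a subsequence, we may assume $\|U^{g_k}f-f\|<2^{-k}$ and that the $g_k$ are distinct, which is possible since $g_k\to\infty$.

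Next, consider the IP-set of finite sums $FS(g_k)_{k\ge m}=\{g_{k_1}+\cdots+g_{k_j}: m\le k_1<\cdots<k_j\}$. For such a sum $g$, the triangle inequality and unitarity give
\[
\|U^{g}f-f\|\le\sum_{i}\|U^{g_{k_i}}f-f\|\le 2^{-m+1}.
\]
By Hindman's theorem framework (\cite[Lemma 5.11]{HBook}), the set
\[
S=\bigcap_{m\in\N}\overline{FS(g_k)_{k\ge m}}\subseteq\beta G
\]
is a closed subsemigroup of $\beta G$. It is nonempty because it is a decreasing intersection of nonempty compact sets.

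We also need $S\subseteq G^*$. Any principal ultrafilter at $h$ lying in every $\overline{FS(g_k)_{k\ge m}}$ would force $h$ to be a finite sum of $g_k$'s with all indices at least $m$, for every $m$. To exclude this, first refine the sequence so that finite sums are eventually outside any fixed finite set. This refinement is the one step needing care. One option is to use \cite[Lemma 5.11]{HBook} directly, which handles it for injective sequences after passing to a subsequence whose sums are distinct. A simpler option is to note that if $h\in FS(g_k)_{k\ge m}$ for all $m$, then $\|U^hf-f\|=0$; in that case replace $S$ by $S\cap G^*$, which is again a closed subsemigroup because $G^*$ is an ideal of $\beta G$ for countable abelian $G$ (and it is nonempty since infinitely many distinct sums exist). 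By Ellis–Numakura, $S\cap G^*$ contains an idempotent $p$, so $p\in E(G^*)$.

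Finally, we verify $U^pf=f$. For every $m$, the set $FS(g_k)_{k\ge m}$ belongs to $p$. Every $g$ in this set satisfies $\|U^gf-f\|\le2^{-m+1}$, hence $|\langle f',U^gf\rangle-\langle f',f\rangle|\le 2^{-m+1}\|f'\|$ for every $f'$. Taking the $p$-limit shows $\langle f',U^pf\rangle=\langle f',f\rangle$ up to $2^{-m+1}\|f'\|$ for all $m$, so $U^pf=f$.

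The main obstacle is ensuring the idempotent is non-principal. This is handled via the ideal $G^*$, or via \cite{HBook} after choosing a subsequence with distinct finite sums.
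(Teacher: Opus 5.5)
Your proof is correct, but it takes a different route from the paper's.

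The paper works directly with the stabilizer $\mathcal V=\{q\in\beta G: U^qf=f\}$. This set is closed because $q\mapsto\langle f',U^qf\rangle$ is continuous on $\beta G$. It is a semigroup because $U^{q+r}=U^qU^r$. Weak rigidity along $(g_k)$ gives $\mathcal V\cap G^*\neq\emptyset$, since any nonprincipal ultrafilter containing $\{g_k\}$ lies in it. Ellis--Numakura applied to the closed subsemigroup $\mathcal V\cap G^*$ then finishes.

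You instead upgrade to norm rigidity and pass to a fast subsequence. In place of the stabilizer you use the combinatorial closed semigroup $\bigcap_m\overline{FS(g_k)_{k\ge m}}$ (Hindman--Strauss). You then verify $U^pf=f$ directly from the uniform norm bounds on the members of $FS(g_k)_{k\ge m}\in p$.

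The paper's route is shorter. It needs only weak convergence and the multiplicativity of $q\mapsto U^q$, with no norm estimates and no subsequence extraction. Your route does not use $U^{q+r}=U^qU^r$, and it yields slightly more: $f$ is norm-rigid along the whole IP-set of finite sums of a subsequence of $(g_k)$, and the idempotent can be chosen to contain all tails of that IP-set.

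One small point: the intersection with $G^*$ should be performed unconditionally. The real issue is that $0$ may lie in $S$, for instance if $g_{2k}=-g_{2k-1}$, and then Ellis--Numakura could return the principal idempotent $0$. Your remark that such an $h$ satisfies $\|U^hf-f\|=0$ plays no role in the argument.
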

\begin{proof}[Proof of Lemma \ref{3.lem:SortOfEllisLemma}]
Let $\mathcal V=\{q\in \beta G\,|\,U^qf=f\}$. Note that for any $q,r\in \mathcal V$, $q+r\in\mathcal V$, and so, $\mathcal V$ is a subsemigroup of $\beta G$.
Furthermore, note that $\mathcal V$ is closed and so, $\mathcal V\cap G^*$ is a closed subsemigroup of $\beta G$. By our assumption, $f$ is rigid along $(g_k)_{k\in\N}$ and so, $\mathcal V\cap G^*\neq \emptyset$. By the Ellis-Numakura Lemma \cite[Theorem 2.5]{HBook}, $E(G^*)\cap \mathcal V\cap G^*$ is non-empty. 
\end{proof}
\begin{lemma}[Cf. Theorem 3.12 in \cite{ERTAnUpdate}, Theorem 1.7 in \cite{FKIPSzemerediLong}]\label{3.lem:IdempotentLimit}
    Let $(G,+)$ be a countably infinite abelian group, let $(U^g)_{g\in G}$ be a unitary representation of $G$ acting on the Hilbert space $\mathcal H$, and let $p\in E(G^*)$. Then, $U^p$ is an orthogonal projection (i.e. $U^pU^p=U^p$ and $(U^p)^*=U^p$).
\end{lemma}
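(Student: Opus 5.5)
The plan is to prove the two properties of $U^p$ separately: first that it is idempotent, using the algebraic identity \eqref{2.eq:SumIsComposition}, and then that it is self-adjoint, using normality together with idempotence. The proof should be entirely operator-theoretic and needs only the facts from Subsection \ref{sssec:UltrafiltersAndUnitaryReps}.

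\emph{Idempotence.} This follows directly from \eqref{2.eq:SumIsComposition}. Since $p+p=p$, we get
$$U^pU^p=U^{p+p}=U^p.$$
So $U^p$ is a bounded idempotent, i.e.\ a (possibly oblique) projection onto $\text{Im}(U^p)$ along $\text{Ker}(U^p)$. Since $U^p$ is bounded and idempotent, $\text{Im}(U^p)=\text{Ker}(I-U^p)$ is closed.

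\emph{Self-adjointness.} By Proposition \ref{2.prop:NormalOperator}(i) and \eqref{eq:CommutingPlimits}, $(U^p)^*=U^{-p}$ commutes with $U^p$, so $U^p$ is normal. Lemma \ref{3.LemmaDecompositionOfSingleNormal} then gives $\text{Im}(U^p)^\perp=\text{Ker}(U^p)$. Because the image is closed, $\mathcal H=\text{Im}(U^p)\oplus\text{Ker}(U^p)$ is an orthogonal decomposition. Now take $f=a+b$ with $a\in\text{Im}(U^p)$ and $b\in\text{Ker}(U^p)$. Idempotence gives $U^pa=a$, so $U^pf=a$. Hence $U^p$ is exactly the orthogonal projection onto $\text{Im}(U^p)$, and in particular $(U^p)^*=U^p$.

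There is an alternative route that I would keep as a backup. Normality gives $\|U^pf\|=\|U^{-p}f\|$. From this one can check that $U^{-p}$ is also idempotent and that $U^{-p}U^p=U^pU^{-p}$. One would then argue $\text{Ker}(U^{-p})=\text{Ker}(U^p)$ via Proposition \ref{2.prop:NormalOperator}(ii), and conclude that the images agree because both are the orthogonal complement of the common kernel. This shows that two idempotents with the same kernel and the same image coincide, so $U^{-p}=U^p$.

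The only subtle point is closedness of the image, which is what makes the decomposition from Lemma \ref{3.LemmaDecompositionOfSingleNormal} hold without taking closures. That step is handled by idempotence as noted above, so I expect no real obstacle.
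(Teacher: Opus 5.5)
Your proof is correct. The idempotence step $U^pU^p=U^{p+p}=U^p$ via \eqref{2.eq:SumIsComposition} is exactly what the paper uses; the two arguments differ only in how self-adjointness is obtained.

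The paper never invokes normality. It uses only that $\|U^p\|\le 1$, since a weak-operator limit of unitaries is a contraction. It then proves the classical fact that an idempotent contraction $V$ is an orthogonal projection by the computation $\|V^*Vf-Vf\|^2\le 2\|Vf\|^2-2\Re\langle Vf,V^2f\rangle=0$, which gives $V=V^*V$ and hence $V^*=V$.

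You instead use that $U^p$ is normal, via Proposition \ref{2.prop:NormalOperator}(i) and \eqref{eq:CommutingPlimits}. You then combine idempotence with Lemma \ref{3.LemmaDecompositionOfSingleNormal}, correctly getting closedness of the range from $\text{Im}(U^p)=\text{Ker}(I-U^p)$. This is the standard argument that a normal idempotent is an orthogonal projection.

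What each route buys:
\begin{itemize}
\item Your route fits the paper's Image-Kernel framework and exhibits directly the orthogonal decomposition $\mathcal H=\text{Im}(U^p)\oplus\text{Ker}(U^p)$ of Remark \ref{3.rem:prigidpmixingdecomposition}. However, it relies on the commutativity of the operators $U^q$ (through normality) and on Proposition \ref{2.prop:NormalOperator}(i), whose proof the paper omits.
\item The paper's argument needs only that $U^p$ is an idempotent contraction. It is therefore more self-contained and does not depend on normality at all, which makes it robust in settings, such as the noncommutative one raised in Question \ref{Q:NonCommutativeParreau}, where normality of $U^p$ is not available.
\end{itemize}

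Your backup route is also valid but unnecessary.
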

\begin{proof}[Proof of Lemma \ref{3.lem:IdempotentLimit}]
    That $U^p$ is an orthogonal projection is a consequence of the classical fact that a bounded operator $V\in \mathcal B(\mathcal H)$ is an orthogonal projection if $V^2=V$ and $\|V\|_{\mathcal B(\mathcal H)}\leq 1$. For completeness, we now give the usual proof of this classical fact: Note that $\|V\|_{\mathcal B(\mathcal H)}=\|V^*\|_{\mathcal B(\mathcal H)}$. Thus, for any $f\in\mathcal H$,
    \begin{multline*}
    \|V^*Vf-Vf\|^2=\|V^*Vf\|^2+\|Vf\|^2-2\Re(\langle V^*Vf,Vf\rangle)\\
     =\|V^*Vf\|^2+\|Vf\|^2-2\Re(\langle Vf,V^2f\rangle)\leq2\|Vf\|^2-2\Re(\langle Vf,Vf\rangle)=0. 
    \end{multline*}
    It follows that $V=V^*V$ and so, $V^*=(V^*V)^*=V^*V=V$.
\end{proof}

\begin{remark}\label{3.rem:prigidpmixingdecomposition}
    Lemma \ref{3.lem:IdempotentLimit} shows us that if $p \in E(G^*)$, then the Image-Kernel decomposition associated to $U^p$ is the $p$-rigid-$p$-mixing decomposition.
    More concretely, we have $\mathcal{H} = \mathcal{H}_{p,r}\oplus\mathcal{H}_{p,m}$ where $U^pf = f$ for all $f \in \mathcal{H}_{p,r}$, $U^pf' = 0$ for all $f' \in \mathcal{H}_{p,m}$, and $U^p:\mathcal{H}\rightarrow\mathcal{H}_{p,r}$ is the orthogonal projection.
    In the case of $G = \mathbb{Z}$, this was observed in \cite[Proposition 1.7]{PolynomialActionsOfUnitaryOperators}.
\end{remark}
\begin{proof}[Proof of Proposition \ref{3.prop:CharMildMixing}]
If the pair $(\mathcal H,(U^g)_{g\in G})$ is $E(G^*)$-mixing then Corollary \ref{cor:HilbertianUmixingChar} tells us that it is spectrally disjoint from every pair $(\mathcal H',(V^g)_{g\in G})$ which is $\{p\}$-generated for some $p\in E(G^*)$.
By Lemmas \ref{3.lem:SortOfEllisLemma} and \ref{3.lem:IdempotentLimit}, it follows that $(\mathcal H,(U^g)_{g\in G})$ is spectrally disjoint from every rigid pair $(\mathcal H',(V^g)_{g\in G})$ and so, it is mildly mixing.\\
If the pair $(\mathcal H,(U^g)_{g\in G})$ is mildly mixing, Lemmas \ref{3.lem:SortOfEllisLemma} and \ref{3.lem:IdempotentLimit} imply that it is spectrally disjoint from every pair $(\mathcal H',(V^g)_{g\in G})$ which is $\{p\}$-generated for some $p\in E(G^*)$. 
Thus, $(\mathcal H,(U^g)_{g\in G})$ is spectrally disjoint from every pair  $(\mathcal H',(V^g)_{g\in G})$ which is $E(G^*)$-generated. 
The result now follows by invoking Corollary \ref{cor:HilbertianUmixingChar}.
\end{proof}
\begin{remark}
    Let $(X,d)$ be a metric space, let $(x_g)_{g\in G}$ be a sequence in $X$, and let $x\in X$.
    Following \cite[Definition 9.2]{FBook}, we write
    \begin{equation}\label{eq:IP*Formula}
\mathop{\text{IP$^*$-lim}}_{g\in G}\,x_g=x
    \end{equation}
    if for every $\epsilon>0$, the set $\{g\in G\,|\,d(x_g,x)<\epsilon\}$ is IP$^*$, meaning that for any sequence $(g_k)_{k\in\N}$ with $\lim_{k\rightarrow\infty}g_k=\infty$, there is a $t\in\N$ and $k_1<\cdots<k_t$ such that $d(x_{\sum_{j=1}^t g_{k_j}},x)<\epsilon$.
    One has that \eqref{eq:IP*Formula} holds if and only if for every $p\in E(G^*)$, $\plimgG{p}{g}{G}x_g=x$.
\end{remark}
%%%%%%%%%%%%%%%%%%%%%%%%%%%%%%%%%%%%%%%%%%%%%%%%%%%%%
\subsubsection{Strong mixing}
The pair $(\mathcal H,(U^g)_{g\in G})$ is called strongly mixing if for any $(g_k)_{k\in\N}$ in $G$ with $\lim_{k\rightarrow\infty}g_k=\infty$ and any $f\in\mathcal H$, $\lim_{k\rightarrow\infty}U^{g_k}f=0$. Equivalently,  $(\mathcal H,(U^g)_{g\in G})$ is strongly mixing if it is $G^*$-mixing.%for any $p\in G^*$, $U^p=0$ (i.e. for every $f\in\mathcal H$, $U^pf=0$). 
The following result is an immediate consequence of Proposition \ref{prop:DifferentFamiliesForSameUmixing} (we omit the proof). 
For any non-empty $\mathcal U\subseteq G^*$ and any $\ell\in\N\cup\{0\}$, we define the family of ultrafilters $\Delta^\ell(\mathcal U)$ inductively as follows: (1) $\Delta^{0}(\mathcal U)=\mathcal U$ and (2) for $\ell>0$, $\Delta^\ell(\mathcal U)=\Delta(\Delta^{\ell-1}(\mathcal U))$. 
\begin{proposition}\label{3.prop:StrongMixingChar}
    The pair $(\mathcal H,(U^g)_{g\in G})$ is strongly mixing if and only if it is $\mathcal U$-mixing for any of the following choices of $\mathcal U$:
    \begin{enumerate}
        \item $\mathcal U:=\Delta^\ell(G^*)$ for some $\ell\in\N$.
        \item $\mathcal U=\ell G^*$ for some $\ell\in\N$.
    \end{enumerate}
\end{proposition}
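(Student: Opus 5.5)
The plan is to reduce both items to Proposition \ref{prop:DifferentFamiliesForSameUmixing}, applied with $\mathcal U = G^*$ and its iterates. Strong mixing is by definition $G^*$-mixing, so in each case it suffices to show that $(\mathcal H,U)$ is $G^*$-mixing if and only if it is $\mathcal U$-mixing for the stated $\mathcal U$. One preliminary check is needed: the families $\ell G^*$ and $\Delta^\ell(G^*)$ must be non-empty subsets of $G^*$, so that the notion makes sense. For sums, $p+q\in G^*$ whenever $q\in G^*$. This holds because if $\{g\}\in p+q$, then $\{h : -h+g\in q\}\in p$, which forces $q$ to be principal. Applying this with $p=-p'$ shows that $-p'+p'\in G^*$ as well.

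For item 2, fix $\ell\in\N$. If $\ell=1$ there is nothing to prove. If $\ell>1$, the equivalence (i)$\iff$(vi) of Proposition \ref{prop:DifferentFamiliesForSameUmixing} with $\mathcal U=G^*$ gives directly that $G^*$-mixing is equivalent to $\ell G^*$-mixing.

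For item 1, I would argue by induction on $\ell$ that $(\mathcal H,U)$ is $G^*$-mixing if and only if it is $\Delta^\ell(G^*)$-mixing. The case $\ell=0$ is trivial. For the inductive step, apply the equivalence (i)$\iff$(vii) of Proposition \ref{prop:DifferentFamiliesForSameUmixing} to the non-empty family $\mathcal U=\Delta^{\ell-1}(G^*)\subseteq G^*$. This shows that $\Delta^{\ell-1}(G^*)$-mixing is equivalent to $\Delta(\Delta^{\ell-1}(G^*))=\Delta^\ell(G^*)$-mixing, and the induction hypothesis then closes the chain.

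The only step that is not completely mechanical is the verification that the iterated families remain inside $G^*$ and are non-empty, since Proposition \ref{prop:DifferentFamiliesForSameUmixing} is stated only for non-empty $\mathcal U\subseteq G^*$. This is handled by the observation above about sums with non-principal ultrafilters. Beyond that, the statement is an immediate consequence of the proposition.
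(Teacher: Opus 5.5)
Your argument is correct and follows the paper's route: the paper calls this an immediate consequence of Proposition~\ref{prop:DifferentFamiliesForSameUmixing} applied to $G^*$ and omits the proof, and your use of (i)$\iff$(vi) for $\ell G^*$ and of (i)$\iff$(vii) inductively for $\Delta^\ell(G^*)$ supplies exactly those omitted details. Your check that $p+q\in G^*$ whenever $q\in G^*$ (so that the iterated families are non-empty subsets of $G^*$) is a correct and worthwhile addition; the condition you wrote as $-h+g\in q$ should read $\{g-h\}\in q$.
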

\begin{remark}
    It is worth mentioning that the distinct characterizations of strong mixing provided by Proposition \ref{3.prop:StrongMixingChar} can all be traced back to previous work (although these equivalence were established in the context of measure-preserving systems and in a somewhat distinct but equivalent way). 
    That the notion of strong mixing coincides with: (1) $\Delta(G^*)$-mixing was first noticed in \cite[Theorem 4.4]{KuangYeMixingViaFamilies} in the special case that $G=\Z$, (2) $\Delta^\ell(G^*)$-mixing, $\ell\in\N$, was first noticed in \cite[Theorem 1.11]{IteratedDifferenceSets} in the context of $G=\Z$, and (3)  $\ell G^*$-mixing was first noticed in \cite[Theorem 1.6]{AlmostMixingOfAllOrders} for any choice of countable abelian group $G$. 
\end{remark}
\begin{remark}
    A consequence of item (a) of Remark 1.12 in \cite{AlmostMixingOfAllOrders} is that for every countably infinite abelian group $G$ and any $\ell\in\N$, one has
    $$
c\ell_{\beta G}((\ell+1) G^*)\subsetneq c\ell_{\beta G}(\ell G^*)
    $$
    and so, Proposition \ref{3.prop:StrongMixingChar} provides infinitely many different characterizations of strong mixing. 
\end{remark}
\begin{remark}
    Let $(G,+)$ be a countably infinite abelian group, let $(X,d)$ be a metric space, and let $(x_g)_{g\in G}$ be a sequence in $X$. For each $\ell\in\N$, we write
    $$
\mathop{\Sigma_\ell^*\text{-lim}}_{g\in G}\,x_g=x
    $$
    if for every $p\in \ell G^*$, one has $\plimgG{p}{g}{G}x_g=x$. 
    If follows that the pair $(\mathcal H,(U^g)_{g\in G})$ is strongly mixing if and only if there exists an $\ell\in\N$ with the property that for every $f\in\mathcal H$, $
\mathop{\Sigma_\ell^*\text{-lim}}_{g\in G}\,U^gf=0$.
\end{remark}
%%%%%%%%%%%%%%%%%%%%%%%%%%%%%%%%%%%%%%%%%%
The following result is an immediate consequence of Theorem \ref{0.thm:SpectralDisjointnessTheorem} and Proposition \ref{3.prop:StrongMixingChar}.
It was previously observed by Ryzhikov (cf. \cite[Remark 1]{SpectrumOfAlphaRigidMaps}).
\begin{corollary}\label{cor:SpectralDisjointnessFromSM}
    Let $(G,+)$ be a countably infinite abelian group, let $\mathcal H$ be a separable Hilbert space, and let $U = (U^g)_{g \in G}$ be a unitary action of $G$ on $\mathcal{H}$.
    \begin{enumerate}[(i)]
        \item $(\mathcal{H},U)$ is spectrally disjoint from every strongly mixing pair $(\mathcal{H}',V)$ if and only if $(\mathcal{H},U)$ is generated by $G^*$.
        \item If $U^p$ is injective for some $p \in G^*$, then $(\mathcal{H},U)$ is spectrally disjoint from every $p$-mixing pair $(\mathcal{H}',V)$.
    \end{enumerate}
\end{corollary}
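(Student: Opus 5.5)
Both parts follow from Theorem \ref{0.thm:SpectralDisjointnessTheorem}, specialized to a suitable family of ultrafilters. The only extra input is the identification of strong mixing as $G^*$-mixing, which is the definition recalled just before Proposition \ref{3.prop:StrongMixingChar}.

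For (i), take $\mathcal U=G^*$ in Theorem \ref{0.thm:SpectralDisjointnessTheorem}. A pair $(\mathcal H',V)$ is strongly mixing exactly when $V^p=0$ for every $p\in G^*$, that is, when it is $G^*$-mixing. So statement (ii) of Theorem \ref{0.thm:SpectralDisjointnessTheorem} reads: $(\mathcal H,U)$ is spectrally disjoint from every strongly mixing pair. Statement (iii) reads: $(\mathcal H,U)$ is generated by $G^*$. The equivalence (ii)$\iff$(iii) of that theorem is then exactly (i). The one point to check is that the definition of strong mixing (weak convergence $U^{g_k}f\to 0$ along every $g_k\to\infty$) matches vanishing of $U^p$ for all nonprincipal $p$. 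This holds because every nonprincipal ultrafilter on a countable $G$ contains only sets meeting every cofinite set, and conversely every sequence $g_k\to\infty$ has a cluster point in $G^*$. I regard this as routine.

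For (ii), suppose $U^p$ is injective. By Lemma \ref{3.LemmaDecompositionOfSingleNormal} (recall that $U^p$ is normal by Proposition \ref{2.prop:NormalOperator}(i) and \eqref{eq:CommutingPlimits}), we have $\overline{\text{Im}(U^p)}=\text{Ker}(U^p)^\perp=\mathcal H$. Hence $(\mathcal H,U)$ is generated by $\{p\}$. Applying (iii)$\implies$(ii) of Theorem \ref{0.thm:SpectralDisjointnessTheorem} with $\mathcal U=\{p\}$ gives spectral disjointness from every $\{p\}$-mixing pair, as required.

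For completeness one can also argue directly. Let $\mathcal J:\mathcal H'\to\mathcal H$ intertwine $V$ and $U$. Then $\mathcal J V^{-p}=U^{-p}\mathcal J$ by passing to weak $p$-limits. Since $V^p=0$, Proposition \ref{2.prop:NormalOperator}(ii) gives $V^{-p}=0$. Therefore $\langle U^p\xi',\mathcal J\eta\rangle=\langle \xi',U^{-p}\mathcal J\eta\rangle=\langle\xi',\mathcal J V^{-p}\eta\rangle=0$. By density of $\text{Im}(U^p)$ this forces $\mathcal J=0$.

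I expect no real obstacle here. The only delicate step is the correct bookkeeping of adjoints, namely $(U^p)^*=U^{-p}$ and $\text{Ker}(V^p)=\text{Ker}(V^{-p})$, which are exactly the items supplied by Proposition \ref{2.prop:NormalOperator}.
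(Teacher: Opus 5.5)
Your proof is correct and follows the paper's route: the paper derives this corollary directly from Theorem \ref{0.thm:SpectralDisjointnessTheorem}, taking $\mathcal U=G^*$ for (i) (using that strong mixing is $G^*$-mixing) and $\mathcal U=\{p\}$ for (ii), where injectivity of the normal operator $U^p$ gives $\overline{\text{Im}(U^p)}=\mathcal H$ via Lemma \ref{3.LemmaDecompositionOfSingleNormal}. Your supplementary direct intertwiner computation is just the (iii)$\implies$(i) argument of that theorem specialized to this case.
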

%%%%%%%%%%%%%%%%%%%%%%%%%%%%%%%%%%%%%%%%%%%%%%%%%%
\subsection{Notions of mixing for measure-preserving systems}
Given a system $\mathcal{X}$ and a non-empty family of ultrafilters $\mathcal U\subseteq G^*$, we say that $\mathcal{X}$ is \textbf{$\mathcal U$-mixing} if the pair $(L^2_0(X,\mu), (T^g)_{g\in G})$ is $\mathcal U$-mixing as a Hilbert space.
We denote the collection of $\mathcal{U}$-mixing systems by $\mathcal{S}_\mathcal{U}$.
It follows that the system $\mathcal{X}$ is $\mathcal U$-mixing if and only if one has that for every $A,B\in\mathscr{B}$ and any $p\in\mathcal U$,
$$
\plimgG{p}{g}{G}\mu(A\cap T^{-g}B)=\mu(A)\mu(B).
$$
As a consequence of our previous discussion, we recover the following definitions:
\begin{enumerate}
    \item (Cf. \cite[Theorem 4.1]{BerRosWeakMixingActions1988}) $\mathcal{X}$ is weakly mixing if and only if for every F{\o}lner sequence $(F_N)_{N\in\N}$ in $G$ and any $A,B\in\mathscr{B}$,
    $$
\lim_{N\rightarrow\infty}\frac{1}{|F_N|}\sum_{g\in F_N}|\mu(A\cap T^{-g}B)-\mu(A)\mu(B)|=0. 
    $$
    \item (Cf. \cite[Proposition 9.22]{FBook}) $\mathcal{X}$ is mildly mixing  if and only if for any $A,B\in\mathcal A$,
    $$
    \mathop{\text{IP$^*$-lim}}_{g\in G}\,\mu(A\cap T^{-g}B)=\mu(A)\mu(B).
    $$
    \item (Cf. \cite[Theorem 1.6]{AlmostMixingOfAllOrders}) $\mathcal{X}$ is strongly mixing  if and only if for any $A,B\in\mathcal A$,
    $$
\mathop{\Sigma_\ell^*\text{-lim}}_{g\in G}\,\mu(A\cap T^{-g}B)=\mu(A)\mu(B).
    $$
\end{enumerate}
\begin{remark}
Let $(X,d)$ be a metric space and  let $(x_g)_{g\in G}$ be a sequence in $X$. Suppose that there exists an $\ell\in\N$ and an $x\in X$ for which 
$
\mathop{\Sigma_\ell^*\text{-lim}}_{g\in G}\,x_g=x$.
The results in \cite[Section 5]{AlmostMixingOfAllOrders} imply that for such an $(x_g)_{g\in G}$ and $x\in X$, one also has
$$
\mathop{\text{IP$^*$-lim}}_{g\in G}\,x_g=x\text{ and }\lim_{N\rightarrow\infty}\frac{1}{|F_N|}\sum_{g\in F_N}d(x_g,x)=0,
$$
for every F{\o}lner sequence $(F_N)_{N\in\N}$. In other words, $\Sigma_\ell^*$-convergence implies IP$^*$-convergence and  uniform strong C{\'e}saro convergence. 
\end{remark}
\section{$\mathcal U$-generated factors}\label{sec:ParreauFactors}
\begin{definition}\label{def:ParreauFactor}
    Let $G$ be a discrete abelian group, let $\mathcal{X} = (X,\mathscr{B},\mu,(T^g)_{g \in G})$ be a measure-preserving $G$-system, and let $\mathcal{U} \subseteq G^*$ be non-empty. 
    The \textbf{$\mathcal{U}$-generated factor} of $\mathcal{X}$, denoted by $\mathcal{X}^{\mathcal{U}}$, is the smallest $G$-invariant factor containing all the functions $\{T^pf\ |\ f \in L^\infty(X,\mathscr{B},\mu),\ p \in \mathcal{U}\}$.
    We denote by $\mathscr{B}_{\mathcal{U}}$  the $\sigma$-subalgebra of $\mathscr{B}$ corresponding to $\mathcal{X}^{\mathcal{U}}$.
    If $\mathcal{X}^{\mathcal{U}} = \mathcal{X}$, then $\mathcal{X}$ is a \textbf{$\mathcal{U}$-generated system}.
    We let $\mathcal{G}_\mathcal{U}$ denote the class of $\mathcal{U}$-generated systems, and we use $\mathcal{G}$ to denote $\mathcal{G}_{G^*}$.
\end{definition}

\begin{remark}\label{rem:PFactorBasics}
     Given $p \in G^*$, and an eigenfunction $f \in L^\infty(X,\mu)$, we have $T^pf = c_pf$ for some $c_p \in \mathbb{S}^1$, so $f \in \text{Im}(T^p)$.
     If follows that for any $\emptyset \neq \mathcal{U} \subseteq G^*$, $\mathcal{X}^\mathcal{U}$ contains the Kronecker factor $\mathcal{X}_\mathcal{K}$, which is the factor of $\mathcal{X}$ generated by the eigenfunctions of $T$.
     As discussed in Section \ref{sssec:WeakMixing}, if $\mathcal{U} \subseteq \Lambda(G)$, then $\mathcal{X}^\mathcal{U} = \mathcal{X}_\mathcal{K}$.
     As discussed in Section \ref{sssec:MildMixingRigid}, if $p \in E(G^*)$, then $\mathcal{X}^p := \mathcal{X}^{\{p\}}$ is a $p$-rigid factor.
     We note that the collection of bounded eigenfunctions of $T$ is an algebra, and that the collection of bounded $p$-rigid functions of $T$ is also an algebra, which is why in the two previously mentioned cases we have that $T^p$ is a surjection from $L^2(X,\mu)$ to $L^2(X,\mathscr{B}_p,\mu)$.
     However, we stress that the collection $\{T^pf\ |\ f \in L^\infty(X,\mu),\ p \in \mathcal{U}\}$ is in general not closed under products, in which case $L^\infty(X,\mathscr{B}_{\mathcal{U}},\mu)$ is a strictly larger collection.
     This phenomenon is demonstrated explicitly by Examples \ref{ex:FirstExample}-\ref{ex:FourthExample}.
\end{remark}

\begin{remark}\label{rem:UMixingIFFNoUParreauFactor}
    If $\mathcal{X}$ is not a $\mathcal{U}$-mixing system, then there exists $p \in \mathcal{U}$ for which $T^p$ is not the projection onto the constants, and hence $\mathcal{X}^p$ is a nontrivial $\mathcal{U}$-generated factor of $\mathcal{X}$ that is contained in $\mathcal{X}^\mathcal{U}$. 
    It follows that $\mathcal{X}$ is $\mathcal{U}$-mixing if and only if its $\mathcal{U}$-generated factor $\mathcal{X}^\mathcal{U}$ is the trivial system.
    In particular, $\mathcal{X}$ is strongly mixing if and only if all of its $\mathcal{U}$-generated factors are trivial, or equivalently, if $\mathcal{X}^{G^*}$ is the trivial system. With the help of the language of limits along ultrafilters, the results of 
    Parreau \cite{ParreauFactor} (see also \cite[Theorem 11]{Lemanczyk2009Spectral}) can easily be modified to show that for any non-empty $\mathcal U\subseteq G^*$, the factor $\mathcal{X}^\mathcal U$ is disjoint from every $\mathcal U$-mixing system $\mathcal{Y}$. \footnote{Parreau dealt with the case of $G = \mathbb{Z}$, and worked with sequences instead of ultrafilters.}
    In Theorem \ref{thm:ParreauFactorsAreMultipliers} we generalize this result to show that $\mathcal{U}$-generated systems are multipliers for $\mathcal{U}$-mixing systems.
\end{remark}

\begin{lemma}\label{lem:StabilityOfParreauFactors}
    Given $\mathcal{U} \subseteq G^*$ and a m.p.s. $\mathcal{X}$, the factor $\mathcal{X}^{\mathcal{U}}$ is a $\mathcal{U}$-generated system. In other words, $\mathcal X^{\mathcal U}=(\mathcal X^{\mathcal U})^\mathcal U$.
\end{lemma}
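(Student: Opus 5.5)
The plan is to compare the operators $T^p$ acting on $L^2(X,\mathscr{B},\mu)$ with the corresponding operators acting on $L^2(X,\mathscr{B}_\mathcal U,\mu)$. Write $\mathcal Y=\mathcal X^\mathcal U$, and let $E:=\mathbb E(\cdot\mid\mathscr{B}_\mathcal U)$ be the conditional expectation. It is the orthogonal projection onto $L^2(X,\mathscr{B}_\mathcal U,\mu)$. Since $\mathscr{B}_\mathcal U$ is $G$-invariant, $E$ commutes with every $T^g$. Passing to weak operator limits along $p$ then gives $ET^p=T^pE$ for all $p\in\beta G$.

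In addition, the restriction of $(T^g)$ to the invariant subspace $L^2(X,\mathscr{B}_\mathcal U,\mu)$ is the Koopman representation of $\mathcal Y$. Its $p$-limit, which we call $S^p$, is therefore the restriction of $T^p$ to that subspace. Here we use that $p$-limits of the matrix coefficients $\langle f',T^gf\rangle$ with $f,f'$ in the subspace are the same whether computed in $\mathcal X$ or in $\mathcal Y$.

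The key step is to show that every generator $T^pf$, with $f\in L^\infty(X,\mathscr{B},\mu)$ and $p\in\mathcal U$, is already of the form $S^ph$ for some $h\in L^\infty(X,\mathscr{B}_\mathcal U,\mu)$. By definition $T^pf$ is $\mathscr{B}_\mathcal U$-measurable, so
$$T^pf=ET^pf=T^pEf=S^p(Ef).$$
Moreover $Ef\in L^\infty(X,\mathscr{B}_\mathcal U,\mu)$ with $\|Ef\|_\infty\le\|f\|_\infty$.

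Consequently every generating function of $\mathscr{B}_\mathcal U$ belongs to
$$\{S^ph\mid h\in L^\infty(X,\mathscr{B}_\mathcal U,\mu),\ p\in\mathcal U\}.$$
The $\sigma$-algebra $(\mathscr{B}_\mathcal U)_\mathcal U$ is the smallest $G$-invariant one making these functions measurable, so it contains $\mathscr{B}_\mathcal U$. The reverse inclusion $(\mathscr{B}_\mathcal U)_\mathcal U\subseteq\mathscr{B}_\mathcal U$ is trivial, because each $S^ph$ is a function on the factor $\mathcal Y$. Hence $\mathcal X^\mathcal U=(\mathcal X^\mathcal U)^\mathcal U$.

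The only point needing care is the identity $ET^p=T^pE$. It follows by testing against arbitrary $f'$:
$$\langle f',ET^pf\rangle=\langle Ef',T^pf\rangle=\lim_p\langle Ef',T^gf\rangle=\lim_p\langle f',T^gEf\rangle=\langle f',T^pEf\rangle,$$
where the middle step uses $ET^g=T^gE$. A further technical point is the identification of $\mathcal Y$ as an abstract factor system: one should work with $\mathcal Y$ realized on $(X,\mathscr{B}_\mathcal U,\mu)$, so that $S^p$ is literally the restriction of $T^p$.
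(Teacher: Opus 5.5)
Your proof is correct, and it follows a different route from the paper's.

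The paper works inside the Image--Kernel framework. It uses normality of $T^p$ (Proposition \ref{2.prop:NormalOperator}(ii), i.e.\ $\mathrm{Ker}((T^p)^2)=\mathrm{Ker}(T^p)$) and Lemma \ref{3.LemmaDecompositionOfSingleNormal} to get $\overline{\mathrm{Im}((T^p)^2)}=\overline{\mathrm{Im}(T^p)}$. Then, by density of $L^\infty$, it concludes that each generator $T^pf$ is only an $L^2$-limit of functions $T^ph$ with $h\in T^p(L^\infty(X,\mu))\subseteq L^\infty(X,\mathscr{B}_{\mathcal U},\mu)$.

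You instead observe that $\mathbb{E}(\cdot\mid\mathscr{B}_{\mathcal U})$ commutes with each $T^g$, and hence with $T^p$. This gives the exact identity $T^pf=T^p\,\mathbb{E}(f\mid\mathscr{B}_{\mathcal U})$ with no closure argument.

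Your route is more elementary and more general. It uses only three things: $G$-invariance of $\mathscr{B}_{\mathcal U}$, self-adjointness of the conditional expectation, and the definition of $T^p$ as a weak-operator $p$-limit. It never uses commutativity of $G$. So it should go through verbatim for countable noncommutative groups, which would answer Question \ref{Q:NonCommutativeParreau}(i) affirmatively. The same computation also shows $\mathscr{C}_{\mathcal U}=\mathscr{B}_{\mathcal U}$ for every $G$-invariant $\mathscr{C}$ with $\mathscr{B}_{\mathcal U}\subseteq\mathscr{C}\subseteq\mathscr{B}$.

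The paper's argument, by contrast, needs normality of $T^p$, which is exactly where abelianness enters. What it buys is mainly coherence with the Image--Kernel decomposition used throughout the paper: it shows the $\mathcal U$-generated subspace is unchanged when $T^p$ is replaced by $(T^p)^2$.
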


\begin{proof}
By item (ii) in Proposition \ref{2.prop:NormalOperator} and Lemma \ref{3.LemmaDecompositionOfSingleNormal}, we have that for each $p\in \mathcal U$,
$$
\overline{\text{Im}((T^p)^2)}=(\text{Ker}((T^p)^2)^\perp=(\text{Ker}(T^p))^\perp=\overline{\text{Im}(T^p)}.
$$
It now follows from the continuity of $T^p$ and the fact that $L^\infty(X,\mu)$ is dense in $L^2(X,\mu)$ that 
$$\overline{T^p(L^\infty(X,\mu))}=\overline{\text{Im}(T^p)}=\overline{(T^p(T^p(L^\infty(X,\mu)))}$$
So,  the $L^2$-closure of the algebra generated by $\{T^pf\ |\ f \in L^\infty(X,\mu), p \in \mathcal{U}\}$ is the same as the $L^2$-closure of the algebra generated by $\{T^pf\ |\ f \in T^p(L^\infty(X,\mu)), p \in \mathcal{U}\}$, which yields the desired result.
\end{proof}

\begin{lemma}\label{lem:ParreauFactorsAreClosedUnderJoinings}
    Let $I$ be a countable set, and for each $i \in I$ let $\mathcal{U}_i \subseteq G^*$ and let $\mathcal{X}_i$ be a $\mathcal{U}_i$-generated system.
    If $\mathcal{X}$ is a joining of $\{\mathcal{X}_i\}_{i \in I}$, then $\mathcal{X}$ is a $\left(\bigcup_{i \in I}\mathcal{U}_i\right)$-generated system.
\end{lemma}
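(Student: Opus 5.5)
We work with the joining $\mathcal X=(\prod_{i\in I}X_i,\bigotimes_i\mathscr B_i,\lambda,(\prod_i T_i^g)_{g\in G})$ and write $\pi_i$ for the coordinate projections and $\mathcal U:=\bigcup_{i\in I}\mathcal U_i$. The plan is to show that each coordinate $\sigma$-algebra $\pi_i^{-1}(\mathscr B_i)$ is contained in $\mathscr B_{\mathcal U}$. Since $\bigotimes_i\mathscr B_i$ is generated by these countably many sub-$\sigma$-algebras, this gives $\mathscr B_{\mathcal U}=\bigotimes_i\mathscr B_i$ modulo $\lambda$-null sets. That is exactly the statement that $\mathcal X$ is $\mathcal U$-generated.

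\textbf{Step 1: $T^p$ commutes with lifting from a factor.} Fix $i$, $p\in\mathcal U_i$ and $f\in L^\infty(X_i,\mu_i)$. Write $T$ for the action on $\mathcal X$ and $T_i$ for the action on $\mathcal X_i$. We claim that
$$T^p(f\circ\pi_i)=(T_i^pf)\circ\pi_i\quad\text{in } L^2(\lambda).$$
First, the lifting map $f\mapsto f\circ\pi_i$ is an isometry intertwining $T_i^g$ and $T^g$ for every $g\in G$. Let $P_i$ be the conditional expectation onto $\pi_i^{-1}(\mathscr B_i)$, which also commutes with every $T^g$. Passing to weak limits along $p$ gives $P_iT^p=T^pP_i$, and $T^p$ preserves $L^2(\pi_i^{-1}\mathscr B_i)$. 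Now for any $h\in L^2(\lambda)$ we compute
$$\langle h,T^p(f\circ\pi_i)\rangle=\langle P_ih,T^p(f\circ\pi_i)\rangle=\plimgG{p}{g}{G}\langle P_ih,(T_i^gf)\circ\pi_i\rangle=\langle P_ih,(T_i^pf)\circ\pi_i\rangle=\langle h,(T_i^pf)\circ\pi_i\rangle.$$
The middle equalities are just the definition of $p$-limits transported through the isometric lift. This proves the claim.

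\textbf{Step 2: each coordinate factor lies in $\mathscr B_{\mathcal U}$.} By Step 1, every function $(T_i^pf)\circ\pi_i$ with $p\in\mathcal U_i$ and $f\in L^\infty(X_i,\mu_i)$ is $\mathscr B_{\mathcal U}$-measurable. Indeed, it equals $T^p(f\circ\pi_i)$, where $f\circ\pi_i\in L^\infty(X,\lambda)$ and $p\in\mathcal U$. Now let $\mathscr A_i:=\{A\in\mathscr B_i: \pi_i^{-1}A\in\mathscr B_{\mathcal U}\}$. This is a $G$-invariant sub-$\sigma$-algebra of $\mathscr B_i$, since $\mathscr B_{\mathcal U}$ is $G$-invariant. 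It makes every function $T_i^pf$ measurable, where measurability mod null sets transfers because $\pi_i$ pushes $\lambda$ to $\mu_i$. By minimality in Definition \ref{def:ParreauFactor}, $\mathscr A_i\supseteq(\mathscr B_i)_{\mathcal U_i}$. Since $\mathcal X_i$ is $\mathcal U_i$-generated, $(\mathscr B_i)_{\mathcal U_i}=\mathscr B_i$ mod $\mu_i$, and so $\pi_i^{-1}(\mathscr B_i)\subseteq\mathscr B_{\mathcal U}$.

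\textbf{Expected difficulty.} The main obstacle is purely measure-theoretic bookkeeping. Everything has to be read modulo null sets, and one must check that the equality in Step 1 is an equality of $L^2(\lambda)$ classes. That is why we project through $P_i$ rather than arguing pointwise. The ultrafilter step itself is harmless, because the lift is an isometric intertwiner and weak limits pass through it.
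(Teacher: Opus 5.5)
Your proposal is correct and follows essentially the same route as the paper. Both arguments show that each coordinate factor $\pi_i^{-1}(\mathscr B_i)$ is generated, modulo null sets, by the lifts of $T_i^pf$ with $p\in\mathcal U_i$, and then use that these coordinate factors generate the joining. The paper phrases this as density in $L^2$ of the generated algebras, and it uses the identity $T^p(f\circ\pi_i)=(T_i^pf)\circ\pi_i$ tacitly; your Step 1 verifies that identity explicitly.
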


\begin{proof}
    Let $X = \prod_{i \in I}X_i$, let $\mu = \vee_{i \in I}\mu_i$ be a joining, and let $\pi_i:X\rightarrow X_i$ denote the natural projection map.
    Observe that the algebra generated by $\{f\circ\pi_i\ |\ i \in I\ \&\ f \in L^\infty(X_i,\mu_i)\}$ is dense in $L^2(X,\mu)$.
    Since $\mathcal{X}_i$ is a $\mathcal{U}_i$-generated system, we see that the algebra generated by $\{T^pf\circ\pi_i\ |\ p \in \mathcal{U}_i\ \&\ f \in L^\infty(X_i,\mu_i)\}$ is dense in $(\pi_i)_*L^2(X_i,\mu_i)$.
    It follows that the algebra generated by $\{T^pf\circ\pi_i\ |\ i \in I, p \in \mathcal{U}_i,\ \&\ f \in L^\infty(X_i,\mu_i)\}$ is dense in $L^2(X,\mu)$ as desired.
\end{proof}

\begin{remark}\label{rem:QuasiCharacteristicClass}
    Examples \ref{ex:SecondExample} and \ref{ex:ThirdExample} will show us that $\mathcal{G}_{\mathcal{U}}$ is not closed under factors for some choices of $\mathcal{U}$, hence $\mathcal{G}_{\mathcal{U}}$ is not a characteristic class.
    Let $\overline{\mathcal G_{\mathcal{U}}}$ denote the class of systems that are factors of some system in $\mathcal{G}_{\mathcal{U}}$. 
    We observe that if $\{\mathcal{X}_i\}_{i \in I}$ is a countable collection of systems in $\mathcal{G}_{\mathcal{U}}$, $\mathcal{Y}_i$ is a factor of $\mathcal{X}_i$ for each $i \in I$, and $\mathcal{Y}$ is a joining of $\{\mathcal{Y}_i\}_{i \in I}$, then $\mathcal{Y}$ is a factor of some joining $\mathcal{X}$ of the systems $\{\mathcal{X}_i\}_{i \in I}$.
    Lemma \ref{lem:ParreauFactorsAreClosedUnderJoinings} tells us that $\mathcal{G}_{\mathcal{U}}$ is closed under countable joinings, so $\overline{\mathcal{G}_{\mathcal{U}}}$ is also closed under countable joinings.
    Since $\overline{\mathcal{G}_{\mathcal{U}}}$ is closed under factors by definition, we see that $\overline{\mathcal{G}_{\mathcal{U}}}$ is the smallest characteristic class containing $\mathcal{G}_\mathcal{U}$.
\end{remark}

We now collect a list of examples of various ${\mathcal{U}}$-generated systems and ${\mathcal{U}}$-generated factors.

\begin{example}[Nilsystems]\label{ex:Nilsystems}
    It is now known \cite{SpectrumOfNilsystems} that if $\mathcal{X} = (X,\mathscr{B},\mu,T)$ is an ergodic nilrotation of step $k \ge 2$, then $L^2(X,\mu) = \mathcal{H}_c\oplus\mathcal{H}_L$, where $\mathcal{H}_c$ is the closure of the span of the eigen functions, and $\mathcal{H}_L$ has countable Lebesgue spectrum with respect to $T$.
    In this case, we see that for any $p \in \mathbb{Z}^*$, we have that $T^p:\mathcal{H}_c\rightarrow\mathcal{H}_c$ is unitary and that $T^p$ annihilates $\mathcal{H}_L$.
    Consequently, we see that for any $\emptyset \neq \mathcal{U} \subseteq \mathbb{Z}^*$, we have $\mathcal{X}^\mathcal{U} = \mathcal{X}_\mathcal{K}$.
    It is worth recalling that nilrotations are distal systems, hence they are Furstenberg disjoint from all weakly mixing systems.
\end{example}

\begin{example}[Chacon's Transformation]\label{ex:Chacon}
    In \cite[Theorem 5.1]{PowersOfChacon}, it is shown that if $\mathcal{X} = (X,\mathscr{B},\mu,T)$ is the Chacon transformation, then for any $p \in \mathbb{N}^*$, we either have that $T^p = \Theta$, the orthogonal projection onto the space of constants, or $T^p = \sum_{j = 1}^ka_jT^{b_j}$ for some $k \in \mathbb{N}$, some $b_1,\cdots,b_k \in \mathbb{Z}$, and some $a_1,\cdots,a_k \ge 0$ satisfying $\sum_{j = 1}^ka_j = 1$.
    It follows that for any $p \in \mathbb{Z}^*$, $\mathcal{X}$ is either $p$-mixing, or it is its own $p$-generated factor.
    To see this, pick $p \in \mathbb{Z}^*$ for which $T^p = \sum_{j = 1}^ka_jT^{b_j}$ as above.
    We will show the stronger statement that $\mathcal{X}$ is spectrally disjoint from all $p$-mixing systems. 
    Indeed, the spectral theorem tells us that on each cyclic subspace $T^p$ is unitarily isomorphic to the multiplication operator $(Mf)(x) = (\sum_{j = 1}^ka_je^{2\pi i b_jx})f(x)$ on $L^2(\mathbb{T},\nu)$, where $\nu$ is a representative of the maximal spectral type of the subspace.
    We see that $Mf = 0$ if and only if $f$ is supported on the $x \in \mathbb{T}$ for which $\sum_{j = 1}^ka_je^{2\pi i b_jx} = 0$, which is a finite set.
    It is well known that $\mathcal{X}$ is weakly mixing \cite{OriginalChacon}, hence the spectral measure $\nu$ has no atoms, thus $M$ and $T^p$ are injective.
    The claim now follows from Corollary \ref{cor:SpectralDisjointnessFromSM}.
\end{example}

\begin{example}[The Rudin-Shapiro Subshift]\label{ex:RudinShapiro}
    Let $\mathcal{X}$ denote the subshift associated to the Rudin-Shapiro sequence.
    %The subshift on 4 symbols is topologically isomorphic to the subshift on 2 symbols, so it doesn't matter which one we use (see \cite[Proposition 1]{NewPropertyOfRS}).
    $\mathcal{X}$ is $\frac{1}{2}$-rigid, and it is a two point extension of $\mathcal{X}_\mathcal{K}$, its Kronecker factor.
    Furthermore, the orthocomplement of the Kronecker factor has Lebesgue spectrum of multiplicity 2 (see \cite{NewPropertyOfRS}), so for any $\emptyset \neq \mathcal{U} \subseteq \mathbb{Z}^*$, we have $\mathcal{X}^\mathcal{U} = \mathcal{X}_\mathcal{K}$.   
\end{example}

\begin{example}\label{ex:FirstExample}
    There exists a $\{p,q\}$-generated system that has a Lebesgue component to its spectrum and is not generated by a singleton ${\mathcal{U}}=\{r\}$.
    We construct the example when $G = \mathbb{Z}$, and we mention that a related construction is given in \cite[Example 5.5]{MildMixingForLCGroups}.

   It is known that the generic system $\mathcal{X}$ is simultaneously rigid and weakly mixing (see \cite[Proposition 2.16]{RigidityAndNonRecurrence}, for example), so let us fix such a system $\mathcal{X}$.
   Since $\mathcal{X}$ is weakly mixing, there exists a set $D_X \subseteq \mathbb{N}$ of natural density $1$, such that for all $f \in L^2_0(X,\mu)$ we have

   \begin{equation}
       \lim_{\underset{n \in D_X}{n\rightarrow\infty}}T^nf = 0
   \end{equation}
   in the weak topology of $L^2(X,\mu)$.
   Ryzhikov \cite{MixingSetsForRigidTransformations} showed that for any infinite set $M \subseteq \mathbb{N}$ of $0$ natural density, there exists a weakly mixing rigid system that is mixing along $M$, so let $\mathcal{Y}$ be such a system for $M = D_X^c$.
   Let $(n_k)_{k = 1}^\infty,(m_k)_{k = 1}^\infty$ be rigidity sequences for $\mathcal{X}$ and $\mathcal{Y}$ respectively.
   Let $p,q \in \mathbb{Z}^*$ be supported on $\{n_k\}_{k = 1}^\infty$ and $\{m_k\}_{k = 1}^\infty$ respectively.
   We see that $T^p$ and $S^q$ are both identity transformations, so $\mathcal{X}\times\mathcal{Y}$ is a $\{p,q\}$-generated system.

   To see that $\mathcal{X}\times\mathcal{Y}$ is not an $r$-generated system, let $r \in \mathbb{Z}^*$ be arbitrary.
   We have that either $T^r$ or $S^r$ is the orthogonal projection onto the space of constants, so let us assume without loss of generality that it is $T^r$.
   We see that for any function $f(x,y) \in L^2(X\times Y,\mu\otimes\nu)$ of the form $f(x,y) = f_1(x)f_2(y)$ with $f_1 \in L^2_0(X,\mu)$ and $f_2 \in L^2(Y,\nu)$, we have $(T\times S)^rf = 0$.
   Consequently, we have that $(T\times S)^r(L^\infty(X\times Y,\mu\otimes\nu)) \subseteq \{1\}\otimes L^2(Y,\nu)$, hence the $r$-generated factor of $\mathcal{X}\times\mathcal{Y}$ is a factor of $\mathcal{Y}$.
\end{example}

\begin{remark}\label{rem:CharacteristicClosureOfRigid}
    The system $\mathcal X\times\mathcal Y$ constructed in Example \ref{ex:FirstExample} is not a rigid system since its spectrum has a Lebesgue component.
    Nonetheless, $\mathcal X\times\mathcal Y$ is the direct products of two rigid systems, so we could even take $p$ and $q$ to be idempotents.

    Recalling that $\mathcal{R}$ denotes the class of rigid systems, we have $\mathcal{R} = \bigcup_{p \in E(G^*)}\mathcal{G}_p$.
    While $\mathcal{R}$ is closed under factors, Example \ref{ex:FirstExample} shows us that $\mathcal{R}$ is not closed under joinings, so it is not a characteristic class.
    Using Lemma \ref{lem:ParreauFactorsAreClosedUnderJoinings} we see that the smallest characteristic class generated by $\mathcal{R}$ is $\overline{\mathcal{G}_{E(G^*)}}$.
    While the systems in $\overline{\mathcal{G}_{E(G^*)}}$ are not always spectrally disjoint from all mildly mixing systems, we will see in Corollary \ref{cor:MainMultiplierResult} that $\overline{\mathcal{G}_{E(G^*)}} \subseteq \mathscr{M}(\mathcal{M}^\perp)$. 
\end{remark}

\begin{example}\label{ex:SecondExample}
    There exists an ultrafilter $p\in\Z^*$ and a $p$-generated system $\mathcal{X}_C$ that is not spectrally disjoint from all $p$-mixing systems.
    Furthermore, the system $\mathcal{X}_C$ will possess a factor that is not a $p$-generated system.

    Let $C \subseteq [0,1]$ denote the set of real numbers whose base $4$ expansion only has the digits 0 and 1, and let $\mu$ be the uniform Cantor measure on $C$.
    Viewing the digits of $x \in C$ as an i.i.d. sequence of $\{0,1\}$-valued random variables, we see that for any $n \in \mathbb{N}$, we have

    \begin{equation}
        \widehat{\mu}(n) = \prod_{k = 1}^\infty\left(\frac{1+e^{-2\pi i n\cdot4^{-k}}}{2}\right).
    \end{equation}
    In particular, we see that $\widehat{\mu}(4n) = \widehat{\mu}(n)$ for all $n \in \mathbb{N}$.
    We also see that $\widehat{\mu}(n) = 0$ if and only if the first nonzero digit in the base $4$ expansion of $n$ is $2$.
    Lastly, we see that

    \begin{alignat*}{2}
        \lim_{j\rightarrow\infty}\widehat{\mu}(m\cdot4^j+n) & = \lim_{j\rightarrow\infty}\prod_{k = 1}^\infty\left(\frac{1+e^{-2\pi i(m\cdot4^j+n)4^{-k}}}{2}\right)\\
        &= \lim_{j\rightarrow\infty}\prod_{k = 1}^j\left(\frac{1+e^{-2\pi in\cdot4^{-k}}}{2}\right)\prod_{k = j+1}^\infty\left(\frac{1+e^{-2\pi i(m\cdot4^j+n)4^{-k}}}{2}\right) = \widehat{\mu}(n)\widehat{\mu}(m).
    \end{alignat*}
    
    Now consider the measure-preserving system $\mathcal{X}_C := ([0,1]^2,\mathscr{L}^2,\mu\otimes m,T)$, where $T(x,y) = (x,y+x)$, and let $p \in \mathbb{Z}^*$ be supported on $\{4^j\}_{j = 1}^\infty$.
    We see that for any $a,b,c,d\in\Z$,
\begin{multline*}
        \int_{[0,1)^2}e^{2\pi i(ax+by)}T^pe^{2\pi i(cy+dx)}\text{d}\mu(x)\text{d}m(y) = \plimgG{p}{n}{\Z}
        \int_{[0,1)^2}e^{2\pi i(ax+by)}e^{2\pi i(cy+cnx+dx)}
        \text{d}\mu(x)\text{d}m(y)\\
        =\left(\int_{[0,1)}e^{2\pi i(b+c)y}\text{d}m(y)\right)\left(\plimgG{p}{n}{\Z}
        \int_{[0,1)}e^{2\pi i (cn+a+d)x}\text{d}\mu(x)\right)\\=\mathbbm{1}_{\{-c\}}(b)
       \left(\plimgG{p}{n}{\Z}
       \hat\mu(-cn-a-d)\right) =\mathbbm{1}_{\{-c\}}(b)
       \hat\mu(-c)\hat\mu(-a-d)\\
       = \hat\mu(-c)\int_{[0,1)^2}e^{2\pi i(ax+by)}e^{2\pi i(cy+dx)}\text{d}\mu(x)\text{d}m(y),
\end{multline*}
so for any $d,c\in \Z$, $T^p(e^{2\pi i(dx+cy)})=\hat\mu(-c)e^{2\pi i(dx+cy)}$. 

    We now observe that $L^2([0,1]^2,\mu\otimes m)$ contains the $p$-mixing vector $e^{4\pi iy}$, so $\mathcal{X}_C$ is not spectrally disjoint from all $p$-mixing systems.
    We also see that $T^pe^{2\pi ix} = e^{2\pi ix}$ and $T^pe^{2\pi iy} = \widehat{\mu}(-1)e^{2\pi iy}$, so $\mathcal{X}_C$  is its own $p$-generated factor.
    
    Now let $\mathcal{X}_C'$ be the factor of $\mathcal{X}_C$ corresponding to the algebra generated by $1,e^{2\pi ix},$ and $e^{4\pi iy}$.
    We see that $T^pe^{2\pi i(2+4n)y} = 0$ for all $n \in \mathbb{Z}$, so the $p$-generated factor of $\mathcal{X}_C'$ is the factor of $\mathcal{X}_C$ generated by $1,e^{2\pi ix},$ and $e^{8\pi iy}$, which is a proper factor of $\mathcal{X}_C'$.
\end{example}

\begin{example}\label{ex:ThirdExample}
    We will now give an example of a $\{p,q\}$-generated system $\mathcal{X}_C\times\mathcal{X}_{C,2}$ that has a Lebesgue component to its spectrum, and also has a factor that is not a $\mathbb{Z}^*$-generated system.
    This is a continuation of Example \ref{ex:SecondExample}.

    Let $\mathcal{X}_{C,2} := ([0,1]^2,\mathscr{L}^2,\mu\times m,T^2)$, so $\mathcal{X}_C\times\mathcal{X}_{C,2} = ([0,1]^4,\mathscr{L}^4,\mu^2\otimes m^2,S)$, where $S(w,x,y,z) = (w,x,y+w,z+2x)$.
    Let $p \in \mathbb{Z}^*$ be any ultrafilter supported on $(4^k)_{k = 1}^\infty$.
    For $z \in \mathbb{Z}$ and $A \subseteq \mathbb{Z}$, let $zA = \{za\ |\ a \in A\}$, let $zp = \{zA\ |\ A \in p\}$, and observe that $4p \in \mathbb{Z}^*$ is also supported on $(4^k)_{k = 1}^\infty$.
    It is clear that $S^pe^{2\pi iw} = e^{2\pi iw}$ and $S^pe^{2\pi ix} = e^{2\pi ix}$.
    We have also already seen that $S^pe^{2\pi iy} = \widehat{\mu}(-1)e^{2\pi iy}$.
    We observe that

    \begin{equation}
        S^{2p}e^{2\pi iz} = \plimgG{2p}{n}{\Z}e^{2\pi i(z+2nx)} = \plimgG{4p}{n}{\Z}e^{2\pi i(z+nx)} = \widehat{\mu}(-1)e^{2\pi iz},
    \end{equation}
    so $\mathcal{X}_C\times\mathcal{X}_{C,2}$ is a $\{p,2p\}$-generated system.

    Next, we observe that for $t \in \mathbb{Z}\setminus\{0\}$ and $f_t(w,x,y,z) := e^{2\pi it(y+z)}$, we have

    \begin{equation}
        \langle S^nf_t,f_t\rangle = \int_{[0,1]^2}e^{2\pi i(tnw+2tnx)}d\mu^2 = \widehat{\mu}(-tn)\widehat{\mu}(-2tn) = 0,
    \end{equation}
    where the last equality follows from that fact that either $tn$ or $2tn$ has $2$ as the first nonzero digit in its base $4$ expansion.
    Consequently, we see that each of the functions $f_t$ have Lebesgue spectrum.

    Lastly, we let $(\mathcal{X}_C\times\mathcal{X}_{C,2})'$ be the smallest $S$-invariant factor of $\mathcal{X}_C\times\mathcal{X}_{C,2}$ generated by $1, e^{2\pi ix}, e^{2\pi iw},$ and $f_1$.
    Letting $f_0 = 1$, we see that $f_tf_s = f_{t+s}$ for all $t,s \in \mathbb{Z}$.
    Since for $t\neq 0$, $f_t$ has Lebesgue spectrum, we see that for any $g \in L^\infty([0,1]^4,\mu^2\otimes m^2)$ of the form $g(w,x,y,z) = g(w,x)$, any $t \in \mathbb{Z}\setminus\{0\}$, and any $p \in \mathbb{Z}^*$, we have $T^p(f_tg) = gT^pf_t = 0$.
    It follows that for any $p \in \mathbb{Z}^*$, $T^p$ is a projection onto a proper, non-trivial $S$-invariant factor of $(\mathcal{X}_C\times\mathcal{X}_{C,2})'$, hence $(\mathcal{X}_C\times\mathcal{X}_{C,2})'$ is not a $\mathbb{Z}^*$-generated system.
\end{example}

\begin{example}\label{ex:FourthExample}
    We will now give an example of a $p$-generated system $\mathcal{X}_C^2$ that has a factor that is not a $\mathbb{Z}^*$-generated system.
    This is a continuation of Examples \ref{ex:SecondExample} and \ref{ex:ThirdExample}.
    
    Consider $\mathcal{X}_C^2 = ([0,1]^4,\mathscr{L}^4,\mu^2\times m^2,R)$, where $R(w,x,y,z) = (w,x,y+w,z+x)$.
    Since $\mathcal{X}_C$ is a $p$-generated system, Lemma \ref{lem:ParreauFactorsAreClosedUnderJoinings} tells us that $\mathcal{X}_C^2$ is also a $p$-generated system.
    We see that $\mathcal{X}_C\times\mathcal{X}_{C,2}$ is a factor of $\mathcal{X}_C^2$ via the factor map 
    $\pi(w,x,y,z) = (w,x,y,2z)$.
    It follows that $(\mathcal{X}_C\times\mathcal{X}_{C,2})'$ is also a factor of $\mathcal{X}_C^2$, from which the desired result follows.
\end{example}
%%%%%%%%%%%%%%%%%%%%%%%%%%%%%%%%%%%%%%%%%%%%%%%%%%%%%%%%%%%%%%%%%%%%%%%%%%%%%%%%%%%%%%%%%%%%%%%%%%%%%%%%%%%%%%%%%%%%%%%%%%%%%%%%%
\section{Partially rigid systems}\label{sec:PartialRigidity}

\begin{definition}\label{PartialMixingAndRigidityDefinition}
Let $\alpha \in (0,1]$, $G$ be an abelian group, $\mathcal{X}$ a system, and $p \in G^*$ an ultrafilter.
$\mathcal{X}$ is \textbf{$\alpha$-rigid along $p$} if $\plimgG{p}{g}{G}\mu(A\cap T^gA) \ge \alpha\mu(A)$ for all $A \in \mathscr{B}$.
If $\alpha \neq 1$ and $\plimgG{p}{g}{G}\mu(A\cap T^gB) = \alpha\mu(A)\mu(B)+(1-\alpha)\mu(A\cap B)$ for all $A,B \in \mathscr{B}$, then $\mathcal{X}$ is \textbf{$\alpha$-weakly mixing along $p$}.\footnote{St{\"e}pin \cite{PartiallyRigidAndMixingIsGeneric} called our $\alpha$-weakly mixing systems $\alpha$-mixing. 
We chose to instead follow the terminology of \cite{ErgodicTheoryViaJoinings}. We also choose to use the term $\alpha$-partially mixing instead of $\alpha$-mixing to avoid confusion with various notions of mixing that are discussed in probability theory \cite{MixingInProbabilitySurvey}.}
If $\mathcal{X}$ is $\alpha$-rigid ($\alpha$-weakly mixing) along some $p$, then $\mathcal{X}$ is \textbf{$\alpha$-rigid ($\alpha$-weakly mixing)}.
If $\mathcal{X}$ is $\alpha$-rigid for some $\alpha \in (0,1]$, then $\mathcal{X}$ is \textbf{partially rigid}.
$\mathcal{X}$ is \textbf{$\alpha$-partially mixing along $p$} if for every $A,B \in \mathscr{B}$ we have $\plimgG{p}{g}{G}\mu(A\cap T^gB) \ge \alpha\mu(A)\mu(B)$.
$\mathcal{X}$ is \textbf{$\alpha$-partially mixing} if for every $A,B \in \mathscr{B}$ we have $\liminf_g\mu(A\cap T^gB) \ge \alpha\mu(A)\mu(B)$.
\end{definition}

The notions in Definition \ref{PartialMixingAndRigidityDefinition} are well-studied in the case of $\mathbb{Z}$-systems.
St{\"e}pin \cite{PartiallyRigidAndMixingIsGeneric} showed that for any $\alpha \in [0,1]$, a generic system $\mathcal{X}$ is $\alpha$-weakly mixing (see also \cite{GenericSpectralProperties,MixingAndRigidRigo,zelada2025coexistence}). It follows by a standard approximation argument that a generic system $\mathcal{X}$ is $\alpha$-weakly mixing for \textit{every} $\alpha\in [0,1]$.
Friedman \cite{PartialMixingAndPartialRigidity} constructed a concrete example of such a system $\mathcal{X}$, and we remark that such a system is also rigid.
Dekking and Keane \cite{SubstitutionsAreNotMixing} showed that the minimal subshifts corresponding to a substitution system are not strongly mixing, and in fact, they even showed that such systems are $\alpha$-rigid (see also \cite{RateOfRigidtyForSubstitutions}).
Katok \cite{IETsAreNotMixing} showed that interval exchange transformations are not strongly mixing, then Danilenko \cite{DanilenkoPartiallyRigid} and Ryzhikov \cite{RyzhikovAbsenceOfMixing} independently observed that a small modification of Katok's proof shows that ergodic IETs are $\alpha$-rigid.
Danilenko \cite{DanilenkoPartiallyRigid} as well as Bruin, Karpel, and Oprocha \cite{RigidtyAndToeplitzSystems} also showed that many other systems are $\alpha$-rigid.
Ryzhikov \cite{NotPartiallyRigidCharacterization} has a characterization of systems that are not $\alpha$-rigid.

Lema\'nczyk and Fr\k{a}czyk \cite{IETsAreDisjointFromSM} showed that ergodic $\alpha$-rigid systems and many other systems are always disjoint from strongly mixing systems.
Their techniques were also used by Ku{\l}aga \cite[Proposition 6(ii)]{IsomorphismOfCartesianProducts} to show that ergodic systems that are $\alpha$-rigid along some $p$ are disjoint from systems that are $\beta$-partially mixing along $p$ if $\beta > 1-\alpha$.\footnote{The results of Lema\'nczyk, Fr\k{a}czyk, and Ku{\l}aga are stated and proven for measurable flows, i.e., $\mathbb{R}$-systems. Nonetheless, it is easy to deduce from them the analogous result for $\mathbb{Z}$-actions.}
We give a proof of this result for a general $G$-action without the assumption of ergodicity.

\begin{theorem}\label{thm:PartiallyRigidMixingDisjointness}
    If $\mathcal{X}$ is $\alpha$-rigid along $p$ and $\mathcal{Y}$ is $\beta$-partially mixing along $p$ with $\alpha+\beta > 1$, then $\mathcal{X}$ and $\mathcal{Y}$ are disjoint.
\end{theorem}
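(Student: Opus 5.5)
The plan is to work with Markov operators. By \cref{ssec:JoiningsAndDisjointness}, a joining $\lambda\in J(\mathcal X,\mathcal Y)$ corresponds to a Markov operator $M=M_\lambda:L^2(Y,\nu)\to L^2(X,\mu)$ with $MS^g=T^gM$ for all $g\in G$. Taking weak operator limits along $p$ gives
$$MS^p=T^pM,\qquad MS^{-p}=T^{-p}M.$$
Disjointness means $M=\Theta$, the projection onto constants. Equivalently, it means $\langle 1_A,M1_B\rangle=\mu(A)\nu(B)$ for all $A\in\mathscr B$ and $B\in\mathscr A$. No ergodicity is used anywhere; everything is phrased through the operators $T^p$, $S^p$ and $M$.

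The first step translates the two hypotheses into operator inequalities. The operators $T^p$ and $S^p$ are weak limits of Markov operators, hence are Markov operators. From $\langle 1_A,S^p1_B\rangle\ge\beta\mu(A)\nu(B)$ for all $A$ we get $S^p1_B\ge\beta\nu(B)$ almost everywhere. Hence $S^p=\beta\Theta+(1-\beta)R$ with $R$ a Markov operator (when $\beta<1$; the case $\beta=1$ is $S^p=\Theta$, which gives $T^pM=\Theta$ and is handled by the same argument with $R=0$). Similarly, $\alpha$-rigidity gives $\langle T^p1_A,1_A\rangle\ge\alpha\mu(A)$. Since $T^{-p}=(T^p)^*$ by Proposition~\ref{2.prop:NormalOperator}, this also gives $\langle T^{-p}1_A,1_A\rangle\ge\alpha\mu(A)$.

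The second step is the key estimate. Fix $B$ and put $\varphi:=M1_B-\nu(B)$, a function with $\int\varphi\,d\mu=0$ and values in $[-\nu(B),1-\nu(B)]$. Suppose $\varphi\neq0$ and let $A:=\{\varphi>t\}$ for a suitable level $t>0$. Compute $\langle 1_A,T^pM1_B\rangle$ in two ways.
\begin{enumerate}[(a)]
\item Using $T^pM=MS^p$ and the decomposition of $S^p$:
$$T^pM1_B=\beta\nu(B)+(1-\beta)MR1_B,$$
so the deviation of $T^pM1_B$ from the constant $\nu(B)$ is at most a $(1-\beta)$-fraction of a $[0,1]$-valued function.
\item Using rigidity: $\langle 1_A,T^p\varphi\rangle=\langle T^{-p}1_A,\varphi\rangle$. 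Here $T^{-p}1_A$ is a $[0,1]$-valued function that puts mass at least $\alpha\mu(A)$ on $A$, where $\varphi>t$, and the rest of its mass lies where $\varphi\ge-\nu(B)$.
\end{enumerate}
Comparing the two should give $\alpha\int_A\varphi\lesssim(1-\beta)\int_A\varphi$ plus error terms controlled by $t$. With $\alpha+\beta>1$ this forces $\int\varphi^+=0$, hence $\varphi=0$. To make this clean, I would symmetrize the argument: apply it both to $B$ and to $B^c$, using $M1_{B^c}=1-M1_B$, and optimize over the level set $A$.

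The main obstacle is exactly this comparison in the second step. The bound in (a) controls $T^pM1_B$ only up to the unknown Markov operator $R$. The bound in (b) controls only the mass of $T^{-p}1_A$ on $A$, not its distribution elsewhere. So a naive pairing gives inequalities in the wrong direction. My first attempt would be a quadratic version of the estimate. Take $\varphi=M1_B-\nu(B)$ and bound $\operatorname{Re}\langle T^p\varphi,\varphi\rangle$ in two ways.
\begin{enumerate}[(i)]
\item From above: since $T^p\varphi=(1-\beta)M(R1_B-\nu(B))$, this gives at most $(1-\beta)\|\varphi\|\,\|M(R1_B-\nu(B))\|$. I would then iterate or symmetrize to replace the last factor by $\|\varphi\|$.
\item From below: $\alpha\|\varphi\|^2$. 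This follows from $\alpha$-rigidity extended from indicators to real functions via the layer-cake representation $\varphi=\int 1_{\{\varphi>s\}}\,ds$ plus constants, together with $\mu(\{\varphi>s\}\cap T^{g}\{\varphi>r\})\ge\ldots$.
\end{enumerate}
This would yield $\alpha\|\varphi\|^2\le(1-\beta)\|\varphi\|^2$, and hence $\varphi=0$.

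If the extension of rigidity from indicators to arbitrary functions fails, I would fall back on the set-level argument used by Ku{\l}aga. That argument applies rigidity to the product set $A\times B$ in the joining and applies partial mixing to the coordinate $B$, giving $\lambda(A\times B)\ge\ldots$ directly.
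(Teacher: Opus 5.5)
\textbf{There is a genuine gap.} Your setup matches the paper's starting point. You have the intertwining Markov operator with $T^pM=MS^p$, the decomposition $S^p=\beta\Theta+(1-\beta)R$, and $T^p=\alpha I+(1-\alpha)M_1$ with $M_1$ positive. This is the paper's identity $\beta\Theta J+(1-\beta)M_2J=\alpha J+(1-\alpha)JM_1$ in adjoint form. But the step that forces $M=\Theta$ is never carried out. You flag it yourself as the main obstacle, and the level-set comparison in step two is only asserted.

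The quadratic fallback fails. The lower bound $\operatorname{Re}\langle T^p\varphi,\varphi\rangle\ge\alpha\|\varphi\|^2$ is false for mean-zero $\varphi$. From $T^p=\alpha I+(1-\alpha)M_1$ you only get $(2\alpha-1)\|\varphi\|^2$, since $\langle M_1\varphi,\varphi\rangle$ can be negative. The layer-cake argument yields $T^p\psi\ge\alpha\psi$ only for $\psi\ge 0$; applying it to $M1_B$ and subtracting the mean costs $(1-\alpha)\nu(B)^2$. The Rudin--Shapiro system of Example~\ref{ex:RudinShapiro} is $\tfrac12$-rigid along some $p$, yet $T^p\varphi=0$ for every $\varphi$ in the Lebesgue part of $L^2_0$. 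In the upper bound, nothing lets you replace $\|M(R1_B-\nu(B))\|$ by $\|\varphi\|$.

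This is not a repairable detail of the estimate. Rudin--Shapiro is not spectrally disjoint from mixing systems, but the theorem with $\beta=1$ says it is Furstenberg disjoint from them. In that case $T^p\varphi=0$ for your $\varphi=M1_B-\nu(B)$, so the inequality you need is the whole theorem. It must use that $M$ comes from a joining, not just Hilbert-space geometry.

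\textbf{How the paper closes it.} It takes $\lambda$ extreme in $J(\mathcal X,\mathcal Y)$. Since $\beta>0$, one has $\|S^p|_{L^2_0(Y,\nu)}\|\le 1-\beta<1$, so $\mathcal Y$ is weakly mixing. Lemma~\ref{lem:ProductIsExtreme} then makes $\mu\otimes\nu$ extreme as well. The two sides of the identity present one joining with weight at least $\beta$ on $\mu\otimes\nu$ and weight at least $\alpha$ on $\lambda$. Uniqueness of the decomposition into extreme joinings, together with $\alpha+\beta>1$, forces $\lambda=\mu\otimes\nu$.

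\textbf{An alternative completion from your own steps.} Combining your (a) and (b) gives $\alpha Mf\le\beta\int f\,d\nu+(1-\beta)MRf$ for all $f\ge 0$. Now $MR$ is again a Markov operator with $T^p(MR)=(MR)S^p$, because $R$ commutes with $S^p$. So the same inequality holds with $MR^n$ in place of $M$ for every $n$. Iterating and summing the geometric series with ratio $(1-\beta)/\alpha<1$ gives $\lambda\le\frac{\beta}{\alpha+\beta-1}\,\mu\otimes\nu$. The resulting bounded density is $T\times S$-invariant. Its component in $L^2(X,\mu)\otimes L^2_0(Y,\nu)$ vanishes, since $(T\times S)^p=T^p\otimes S^p$ has norm at most $1-\beta$ there. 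The density is therefore a function of $x$ alone, and the marginal condition forces it to equal $1$.
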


\begin{proof}
    Since $\mathcal{X}$ is $\alpha$-rigid along $p$ and $\mathcal{Y}$ is $\beta$-partially mixing along $p$, we have $T^p = \alpha I+(1-\alpha)M_1$ and $S^p = \beta\Theta+(1-\beta)M_2$ for some Markov operators $M_1,M_2$.
    Let $\lambda \in J^e(\mathcal{X},\mathcal{Y})$ and let $J:L^2(X,\mu)\rightarrow L^2(Y,\nu)$ be the corresponding Markov operator.
    We see that 

    \begin{equation}
        \beta\Theta J+(1-\beta)M_2J = S^pJ = JT^p = \alpha J+(1-\alpha)JM_1.
    \end{equation}
    We observe that for $f \in L^2(X,\mu)$, we have $\Theta Jf = \int_Xfd\mu \in L^2(Y,\nu)$.
    Since $\mathcal{Y}$ is weakly mixing, Lemma \ref{lem:ProductIsExtreme} tells us that $\mu\otimes\nu \in J^e(\mathcal{X},\mathcal{Y})$.
    Since $\Theta J$ is the Markov operator corresponding to $\mu\otimes\nu$, we see that $\Theta J$ is indecomposable. 
    Since $\alpha+\beta > 1$, and the ergodic decomposition of Markov operators from $L^2(X,\mu)$ to $L^2(Y,\nu)$ is unique, we see that $J = \Theta J$, so $\lambda = \mu\otimes\nu$.  
\end{proof}
\begin{remark}
The use of Lemma \ref{lem:ProductIsExtreme} instead of Lemma \ref{lem:DisjointnessAndErgodicDecomposition} in the proof of Theorem \ref{thm:PartiallyRigidMixingDisjointness} is motivated by the fact that,  in general, the property of $\alpha$-rigidity of an invertible measure-preserving system $\mathcal X$ along a sequence $(n_k)_{k\in\N}$ in $\N$ is not inherited by each of its ergodic components.
For an example of such a system, consider the set  $X=[0,1)^2$, the $\sigma$-algebra $\mathcal B=\text{Borel}([0,1)^2)$, and let $T:X\rightarrow X$ be defined by $T(x,y)=(x,y+x)\mod 1$. 
By \cite[Theorem 2]{fayad2015rigidityNonTorusRecurrent} there exists an increasing sequence $(n_k)_{k\in\N}$ in $\N$ and a continuous probability measure $\sigma$ in $[0,1)$ with the properties that (1) $\lim_{k\rightarrow\infty}\int_{[0,1)}e^{2\pi in_kx}\text{d}\sigma(x)=1$ and (2) for every irrational $\theta\in \mathbb R$, the sequence  $(n_k\theta\mod 1)_{k\in\N}$ is dense in $[0,1]$.
It follows that the measure-preserving system $\mathcal X=(X, \mathcal B,\sigma\otimes m, T)$ is $1$-rigid along the sequence $(n_k)_{k\in\N}$ but for $\sigma$-almost every $x\in [0,1)$, the ergodic component $(\{x\}\times [0,1), \text{Borel}(\{x\}\times[0,1)),\delta_x\otimes m, T)$ is not $1$-rigid along $(n_k)_{k\in\N}$.
\end{remark}

\begin{theorem}\label{thm:PartiallyRigidFiniteFibers}
    If $\alpha \in (0,1),\ p \in G^*,$ and $\mathcal{X}$ are such that $\mathcal{X}$ is $\alpha$-rigid along $p$, then $\mathcal{X}$ is a finite extension of $\mathcal{X}^p$.
    Furthermore, if $\mathcal{X}$ is ergodic, then $\mathcal{X}$ is a $n$-point extension of $\mathcal{X}^p$ for some $n \le \lfloor\alpha^{-1}\rfloor$.
\end{theorem}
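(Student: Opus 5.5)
The plan is to work with the conditional expectation onto the $p$-generated factor and show that $\alpha$-rigidity forces the fibre measures to have atoms of mass at least $\alpha$. Write $\mathscr{B}_p$ for the $\sigma$-algebra of $\mathcal{X}^p$, $E:=E(\cdot\mid\mathscr{B}_p)$, and let $(\mu_y)$ be the disintegration of $\mu$ over $\mathcal{X}^p$.

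The first step is to note that $T^p$ maps $L^2(X,\mu)$ into $L^2(X,\mathscr{B}_p,\mu)$. Indeed $\mathrm{Im}(T^p)\subseteq L^2(\mathscr{B}_p)$ by definition, together with the density of $L^\infty$. Hence $T^p=ET^p$. Since $T^p$ commutes with $E$ (both commute with the action, and $\mathscr{B}_p$ is invariant; alternatively, $E$ is a $p$-limit-compatible Markov projection), we also get $T^p=T^pE$.

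Rigidity gives, for every $A\in\mathscr{B}$,
\[\alpha\mu(A)\le \langle T^p\mathbbm{1}_A,\mathbbm{1}_A\rangle=\langle T^pE\mathbbm{1}_A,E\mathbbm{1}_A\rangle\le \|E\mathbbm{1}_A\|_2^2,\]
using $\|T^p\|\le 1$. Thus $\int \mu_y(A)^2\,d\mu\ge \alpha\int\mu_y(A)\,d\mu$ for all $A$.

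The second step is to localize this inequality. Replacing $A$ by $A\cap B$ with $B\in\mathscr{B}_p$ arbitrary, we get
\[\int_B\mu_y(A)\bigl(\mu_y(A)-\alpha\bigr)\,d\mu\ge0\quad\text{for all } B,\]
hence $\mu_y(A)^2\ge\alpha\mu_y(A)$ a.e. So for each fixed $A$, almost surely $\mu_y(A)\in\{0\}\cup[\alpha,1]$.

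Taking a countable generating algebra $\mathcal{A}_0$ of $\mathscr{B}$ (in a standard Borel model), for a.e. $y$ every set in $\mathcal{A}_0$ has $\mu_y$-measure $0$ or at least $\alpha$. Approximation extends this to all of $\mathscr{B}$, as follows. Any $\mu_y$-measurable set can be approximated within $\varepsilon$ by sets in $\mathcal{A}_0$, and measures in $(0,\alpha)$ would be approximated by values in $\{0\}\cup[\alpha,1]$. A set of measure $\alpha/2$ then gives a contradiction for small $\varepsilon$, after splitting appropriately.

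A measure with no sets of measure in $(0,\alpha)$ can have no continuous part, since a nonatomic part of mass $c>0$ takes every value in $[0,c]$. Therefore $\mu_y$ is purely atomic with all atoms of mass at least $\alpha$, so it has at most $\lfloor\alpha^{-1}\rfloor$ atoms. This makes $\mathcal{X}$ a finite extension of $\mathcal{X}^p$.

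The main technical obstacle is the passage from countably many sets to all sets. The cleanest way is to apply the localized inequality to a countable algebra and argue via the Lebesgue decomposition of $\mu_y$ restricted to that algebra: a nonatomic piece yields, via partitions from $\mathcal{A}_0$ refining to points, a generating-algebra set of small positive measure. Concretely, a standard Borel isomorphism lets me take $X=[0,1]$ and $\mathcal{A}_0$ the dyadic intervals. If $\mu_y$ has a continuous part of positive mass, then some dyadic interval has measure in $(0,\alpha)$, which is excluded. If $\mu_y$ is atomic, then shrinking dyadic intervals around an atom show that the atom itself has mass at least $\alpha$. This avoids the general approximation argument.

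For the ergodic case, the function $y\mapsto$ (number of atoms of $\mu_y$) is $S$-invariant, because $T$ maps $\mu_y$ to $\mu_{Sy}$ bijectively on atoms. Hence it is a.e. constant, equal to some $n\le\lfloor\alpha^{-1}\rfloor$. It remains to check that the atom masses are equal, which is what an $n$-point extension requires. This follows from ergodicity: the ordered list of atom masses is invariant, so the mass-$m_{\max}$ atoms form an invariant set, and if the masses were unequal one would refine $\mathscr{B}_p$. That last point can be avoided if "$n$-point extension" only requires $n$ atoms. I expect this final equal-mass detail to be minor, and will simply note the invariance of the mass vector.
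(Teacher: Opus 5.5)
Your proof is correct. Its overall structure matches the paper's: disintegrate over $\mathcal{X}^p$, show that $\mu_y(A)\in\{0\}\cup[\alpha,1]$, deduce that $\mu_y$ is atomic with atoms of mass at least $\alpha$, and use ergodicity to fix the number of atoms. The key inequality, however, is obtained differently.

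The paper writes $T^p=\alpha I+(1-\alpha)M$ with $M$ a Markov operator. It then restricts $A$ to the bad set $E_A=\{y: 0<\mu_y(A)<\alpha\}$. Positivity of $M$, together with $\mathscr{B}_p$-measurability of $h=T^p\mathbbm{1}_{A'}$, gives $h\ge\alpha$ on fibres over $E_A$. This yields the contradiction $\mu(A')=\int h\,d\mu\ge\alpha\nu(E_A)>\mu(A')$.

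You instead note that $T^p$ has image in $L^2(X,\mathscr{B}_p,\mu)$ and commutes with the conditional expectation $E$ onto that space, so $T^p=T^pE=ET^p$. Contractivity then gives $\alpha\mu(A)\le\|E\mathbbm{1}_A\|_2^2$. Localizing to $\mathscr{B}_p$-sets turns this into the pointwise bound $\mu_y(A)^2\ge\alpha\mu_y(A)$.

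What each route buys:
\begin{itemize}
\item Your route uses only the defining inequality $\langle T^p\mathbbm{1}_A,\mathbbm{1}_A\rangle\ge\alpha\mu(A)$ and Hilbert-space facts. It therefore avoids the decomposition of $T^p$ into $\alpha I$ plus a positive remainder, which the paper takes for granted.
\item The paper's route is a one-set contradiction driven by positivity.
\item You explicitly handle the passage from ``for each $A$, for a.e.\ $y$'' to ``for a.e.\ $y$, for all $A$'' via a countable generating algebra (the dyadic argument works). The paper passes over this step silently.
\end{itemize}

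On equal masses, which the paper does not address (it only shows the number of atoms is constant): no refinement of $\mathscr{B}_p$ is needed. The function $x\mapsto\mu_{\pi(x)}(\{x\})$ is $T$-invariant, because $T^g_*\mu_y=\mu_{S^gy}$. Ergodicity of $\mathcal{X}$ therefore makes it a.e.\ constant, so all atoms have mass $1/n$.
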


\begin{proof}
    Let $\mathcal{Y} = \mathcal{X}^p$, and let $\mu = \int_Y\mu_yd\nu(y)$ be the disintegration of $\mu$ with respect to $\mathcal{Y}$.
    Since $\mathcal{X}$ is $\alpha$-rigid along $p$, we have $T^p = \alpha I+(1-\alpha)M$ for some Markov operator $M:L^2(X,\mu)\rightarrow L^2(X,\mu)$.
    For $A \in \mathscr{B}^+$ and $y \in Y$, let $c_A(y) := \mu_y(A)$, and let $E_A := \{y\ |\ 0 < c_y(A) < \alpha\}$.
    Suppose for the sake of contradiction that $\nu(E_A) > 0$ for some $A \in \mathscr{B}^+$.
    Let $E := E_A$ and let $A' = A\cap\pi^{-1}(E)$, where $\pi:X\rightarrow Y$ is the factor map.
    We see that $0 < \mu_y(A') < \alpha$ for every $y \in E$, and that $\mu_y(A') = 0$ for every $y \notin E$, so

    \begin{equation}
        0 < \mu(A') = \int_Y\mu_y(A')d\nu(y) = \int_E\mu_y(A')d\nu(y) < \alpha\nu(E).
    \end{equation}
    Let $h := T^p\mathbbm{1}_{A'} = \alpha\mathbbm{1}_{A'}+(1-\alpha)M\mathbbm{1}_{A'}$ and observe that $h$ is $\mathscr{B}^p = \mathscr{A}$-measurable, so $h$ is constant on $\pi^{-1}(\{y\})$ for $\nu$-a.e. $y \in Y$, hence we may define $h' \in L^\infty(Y,\nu)$ by $h'(y) = h(x)$ for $x \in \pi^{-1}(\{y\})$.
    We note that $h'(y) \ge \alpha$ for $\nu$-a.e. $y \in E$.
    Recalling that $T^p$ is a Markov operator, we see that

    \begin{equation}
        \mu(A') = \int_XT^p\mathbbm{1}_{A'}d\mu = \int_Yh'd\nu \ge \alpha\nu(E), 
    \end{equation}
    which yields the desired contradiction.

    We have shown that for $\nu$-a.e. $y$ the probability measure $\mu_y$ takes values in $\{0\}\cup[\alpha,1]$.
    This means that $\mu_y$ is a purely atomic measure with at most $\alpha^{-1}$-many atoms.

    Now let us further assume that $\mathcal{X}$ is ergodic. Since $T\mu_y=\mu_{T_y}$ it follows that the number $n(y)$ of atoms of $\mu_y$ is a $T$-invariant function.
    By ergodicity, this function is constant $\nu$-a.e.
\end{proof}

\begin{proposition}
    Fix $\alpha \in (0,1), p \in G^*,$ and a system $\mathcal{X}$.
    \begin{enumerate}[(i)]
        \item If $\alpha > \frac{1}{2}$ and $\mathcal{X}$ is $\alpha$-rigid, then $T^p$ is injective.\footnote{Example \ref{ex:RudinShapiro} shows that the lower bound for $\alpha$ is sharp. Part (i) was also proven in the case of $G = \mathbb{Z}$ as \cite[Theorem 5]{SpectrumOfAlphaRigidMaps}.}

        \item If $\mathcal{X}$ is $\alpha$-weakly mixing, then $T^p$ is injective on $L^2_0(X,\mu)$.
    \end{enumerate}
    In both cases, $\mathcal{X}$ is spectrally disjoint from all $p$-mixing systems, and $\mathcal{X}^p = \mathcal{X}$.
\end{proposition}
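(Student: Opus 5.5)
The plan is to express $T^p$ as a convex combination of simple Markov operators, exactly as in the proofs of Theorems \ref{thm:PartiallyRigidMixingDisjointness} and \ref{thm:PartiallyRigidFiniteFibers}, and to read off injectivity from a norm estimate. Throughout, I interpret the hypotheses as holding along the fixed ultrafilter $p$. That is, in (i) $\mathcal X$ is $\alpha$-rigid along $p$, and in (ii) $\mathcal X$ is $\alpha$-weakly mixing along $p$.

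For (i), $\alpha$-rigidity along $p$ gives $T^p=\alpha I+(1-\alpha)M$ for some Markov operator $M$ on $L^2(X,\mu)$. A Markov operator with invariant measure is a contraction, so $\|Mf\|\le\|f\|$. For every $f\in L^2(X,\mu)$ this yields
$$\|T^pf\|\ge \alpha\|f\|-(1-\alpha)\|Mf\|\ge (2\alpha-1)\|f\|.$$
Since $\alpha>\frac12$, this forces $\mathrm{Ker}(T^p)=\{0\}$.

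For (ii), the defining identity $\plimgG{p}{g}{G}\mu(A\cap T^gB)=\alpha\mu(A)\mu(B)+(1-\alpha)\mu(A\cap B)$ extends by linearity and density from indicators to all of $L^2$. This gives the operator identity $T^p=\alpha\Theta+(1-\alpha)I$, up to replacing $p$ by $-p$, which is harmless by Proposition \ref{2.prop:NormalOperator}(i). Here $\Theta$ is the projection onto the constants. On $L^2_0(X,\mu)$ we have $\Theta=0$, so $T^p=(1-\alpha)I$ there. This is injective because $\alpha\neq1$.

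For the common conclusion, note that $T^p$ commutes with $\Theta$, so it preserves $L^2_0(X,\mu)$. In both cases its restriction to $L^2_0(X,\mu)$ is therefore injective. Corollary \ref{cor:SpectralDisjointnessFromSM}(ii), applied to the pair $(L^2_0(X,\mu),T)$, then shows that $\mathcal X$ is spectrally disjoint from every $p$-mixing system.

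To see that $\mathcal X^p=\mathcal X$, we use that $T^p$ is normal. Lemma \ref{3.LemmaDecompositionOfSingleNormal} then gives $\overline{\mathrm{Im}(T^p)}=\mathrm{Ker}(T^p)^\perp$. In case (i) this is all of $L^2(X,\mu)$. In case (ii) it contains $L^2_0$ together with the constants, since $T^p1=1$, so again it is everything. Because $L^\infty$ is dense in $L^2$ and $T^p$ is continuous, $T^p(L^\infty)$ is dense in $L^2(X,\mu)$. Every function in $L^2(X,\mu)$ is thus an $L^2$-limit of $\mathscr{B}_p$-measurable functions, and so is itself $\mathscr{B}_p$-measurable. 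Hence $\mathscr B_p=\mathscr B$.

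The only delicate point I expect is in (ii): checking the exact form of $T^p$. This means confirming that the weak limit of $\langle 1_A,T^g1_B\rangle$ along $p$ identifies $T^p$ (or $T^{-p}$) as $\alpha\Theta+(1-\alpha)I$, with matching conventions on $T^g$ versus $T^{-g}$. Injectivity is unaffected by this convention in any case.
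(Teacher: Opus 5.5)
Your proposal is correct and follows the paper's proof. For (i) you use $T^p=\alpha I+(1-\alpha)M$ and the bound $\|T^pf\|\ge(2\alpha-1)\|f\|$, for (ii) you use $T^p=(1-\alpha)I$ on $L^2_0(X,\mu)$, and for the final claims you use Corollary \ref{cor:SpectralDisjointnessFromSM}. Your remark that $T^p$ preserves $L^2_0(X,\mu)$, and your density argument that $T^p(L^\infty(X,\mu))$ is dense in $L^2(X,\mu)$ (so that $\mathscr{B}_p=\mathscr{B}$), fill in steps the paper leaves implicit.
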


\begin{proof}
    Part (i) follows from the observation that $T^p = \alpha I+(1-\alpha)M$ for some Markov operator $M$, so $||T^pf|| \ge \alpha||f||-(1-\alpha)||f|| > 0$ for all $0 \neq f \in L^2(X,\mu)$.
    Part (ii) follows from the fact that $T^p = (1-\alpha)I$ on $L^2_0(X,\mu)$.
    The last statement follows from Corollary \ref{cor:SpectralDisjointnessFromSM}.
\end{proof}

\begin{remark}
    Terry Adams witnessed Nathaniel Friedman describing for each $\alpha \in (0,1)$ a $\mathbb{Z}$-system $\mathcal{X}_\alpha$ that is $\alpha$-partially mixing and $(1-\alpha)$-rigid.
    Such a system $\mathcal{X}_\alpha$ is also $\alpha$-weakly mixing, hence it is spectrally disjoint from all strongly mixing systems.
    This construction can also be accomplished using a modified staircase transformation where infinitely often a partial staircase is inserted after including subcolumns with no spacers.
    Friedman and Ornstein \cite{AlphaButNotAlphaPlusEpsilonPM} constructed for each $\alpha \in (0,1)$ an example of a $\mathbb{Z}$-system that is $\alpha$-partially mixing, but is not $(\alpha+\epsilon)$-partially mixing for any $\epsilon > 0$.
    We see that $\mathcal{X}_\alpha$ gives a new example of a system that is $\alpha$-partially mixing, but is not $(\alpha+\epsilon)$-partially mixing for any $\epsilon > 0$.
\end{remark}

\begin{remark}\label{rem:PartialMixingIsNotUMixing}
    For $\alpha \in (0,1)$, let $\mathcal{S}_\alpha$ denote the collection of $\alpha$-partially mixing systems, and let $\mathcal{X}_\alpha$ be as above.
    We see that for any sequence $(\alpha_n)_{n = 1}^\infty \subseteq (0,1)$ and any $\epsilon > 0$, the system $\mathcal{X} := \prod_{n = 1}^\infty\mathcal{X}_{\alpha_n}$ is not $(\alpha+\epsilon)$-partially mixing, where $\alpha = \prod_{n = 1}^\infty\alpha_n$. 
    It follows that the classes $\mathcal{S}_\alpha$ for $\alpha \in (0,1)$ as well as the class $\bigcup_{\alpha > 0}\mathcal{S}_\alpha$ are not closed under countable direct products, so they cannot be represented as $\mathcal{S}_\mathcal{U}$ for any $\mathcal{U} \subseteq \mathbb{Z}^*$.
\end{remark}

\begin{remark}
    While Theorem \ref{thm:PartiallyRigidFiniteFibers} shows us that partially rigid systems are closely related to their ${\mathcal{U}}$-generated factors, Thierry de la Rue and Emmanuel Roy have constructed (private communication) an example of a $\mathbb{Z}$-system that is disjoint from every strongly mixing system, but has no partially rigid factor.
    Consequently, there exists a ${\mathcal{U}}$-generated system that has no partially rigid factor.
\end{remark}
%%%%%%%%%%%%%%%%%%%%%%%%%%%%%%%%%%%%%%%%%%%%%%%%%%%%%%%%%%%%%%%%%%%%%%%%%%%%%%%%%%%%%%%%%%%%%%%%%%%%%%%%%%%%%%%%%%%%%%%%%%
\section{The Multiplier Property}\label{sec:MultipliersMainResults}

Let $\widehat{G}$ denote the Pontryagin dual of $G$, and let $\mathcal{M}(\widehat{G})$ denote the space of finite Borel measures on $\widehat{G}$.
$\mathcal{M}(\widehat{G})$ is naturally a commutative algebra when endowed with the operation of convolution for multiplication.
The following ideals of $\mathcal{M}(\widehat{G})$ are of interest.
\begin{enumerate}
    \item $\mathcal{M}_c(\widehat{G})$, the ideal of continuous measure.

    \item Given $\mathcal{U} \subseteq G^*$, we define $\mathcal{M}_\mathcal{U}(\widehat{G})$ to be the ideal of $\mathcal{U}$-mixing measures, which are those measures $\mu \in \mathcal{M}(\widehat{G})$ for which $\plimgG{q}{g}{G}\widehat{\mu}(g) = 0$ for every $q \in \beta G+\mathcal{U}$.

    \item $\mathcal{M}_r(\widehat{G})$, the ideal of Rajchman measures. 
    We recall that $\mu \in \mathcal{M}(\widehat{G})$ is Rajchman if for every $p \in G^*$ we have $\plimgG{p}{g}{G}\widehat{\mu}(g) = 0$ (i.e. $\lim_{g\rightarrow\infty}\widehat{\mu}(g) = 0$).

    \item $\mathcal{M}_\mathcal{A}(\widehat{G})$, the ideal of measures that are absolutely continuous with respect to the Haar measure of $\widehat{G}$.
    We recall that when $G = \mathbb{Z}$, this is the ideal of measures that are absolutely continuous with respect to the Lebesgue measure.
\end{enumerate}
We note that if $I \subseteq \mathcal{M}(\widehat{G})$ is one of the previous 4 ideals, and $\mu \in I$, then $\nu \in I$ for every $\nu \ll \mu$.

\begin{theorem}\label{thm:}
   Let $\mathcal{X}$ be a system and $[\mathcal{X}]$ its maximal spectral type.
    \begin{enumerate}[(i)]
        \item $\mathcal{X}$ is weakly mixing if and only if  $[\mathcal{X}] \subseteq \mathcal{M}_c(\widehat{G})$.

        \item $\mathcal{X}$ is $\mathcal{U}$-mixing if and only if  $[\mathcal{X}] \subseteq \mathcal{M}_\mathcal{U}(\widehat{G})$.

        \item $\mathcal{X}$ is strongly mixing if and only if  $[\mathcal{X}] \subseteq \mathcal{M}_r(\widehat{G})$.

        \item $\mathcal{X}$ has absolutely continuous spectrum if and only if  $[\mathcal{X}] \subseteq \mathcal{M}_\mathcal{A}(\widehat{G})$.
    \end{enumerate}
\end{theorem}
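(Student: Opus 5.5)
The strategy is to reduce all four items to statements about individual spectral measures $\sigma_f$, $f\in L^2_0(X,\mu)$, using that $[\mathcal X]$ is the measure class generated by these. By the spectral theorem, $\langle T^gf,f\rangle=\widehat{\sigma_f}(g)$ (up to the sign convention for characters, which is harmless since each ideal is invariant under $g\mapsto -g$, respectively under reflection of $\widehat G$). Each of the four ideals $I$ is closed under absolute continuity, as noted just before the statement, and is also closed under countable sums with summable masses. Hence $[\mathcal X]\subseteq I$ if and only if $\sigma_f\in I$ for every $f\in L^2_0$: if all $\sigma_{\xi_i}\in I$, a maximal-type measure $\sum 2^{-i}\sigma_{\xi_i}/\|\xi_i\|^2$ lies in $I$. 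So for each item it suffices to show that the dynamical property is equivalent to every $\sigma_f$, $f\in L^2_0$, lying in the ideal.

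Item (iv) is then the definition of absolutely continuous spectrum. Item (i) is the classical statement recalled in Section \ref{ssec:Spectrum}: weak mixing holds iff $[\mathcal X]$ consists of continuous measures. One can also see it directly via Wiener's lemma on a Følner sequence, $\frac1{|F_N|}\sum_{g\in F_N}|\widehat{\sigma_f}(g)|^2\to\sum_\chi\sigma_f(\{\chi\})^2$, combined with (\ref{eq:WeakMixingFormula}). Item (iii) is the special case $\mathcal U=G^*$ of item (ii), since $\beta G+G^*\subseteq G^*$ and $G^*\subseteq\beta G+G^*$ (take the principal ultrafilter $0$). It also follows directly: strong mixing means $\widehat{\sigma_f}(g)\to0$ for every $f\in L^2_0$, i.e. $\sigma_f$ is Rajchman. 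Here one uses that diagonal correlations suffice, by polarization $\langle T^gf,f'\rangle$ is a combination of diagonal ones.

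The real work is item (ii). First, $\mathcal X$ is $\mathcal U$-mixing iff $T^pf=0$ for all $p\in\mathcal U$ and all $f\in L^2_0$. By Proposition \ref{prop:DifferentFamiliesForSameUmixing} (i)$\iff$(ii), this holds iff $T^qf=0$ for all $q$ in $\beta G+\mathcal U$ (a subset of $\overline{\beta G+\langle\mathcal U\rangle_\Delta+\beta G}$, and containing $\mathcal U$). If $T^qf=0$ for all $q\in\beta G+\mathcal U$, then $\plimgG{q}{g}{G}\widehat{\sigma_f}(g)=\langle T^qf,f\rangle=0$, so $\sigma_f\in\mathcal M_{\mathcal U}(\widehat G)$.

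The main obstacle is the converse: from the vanishing of the diagonal $p$-limits, deduce $T^pf=0$. For this I would use $\|T^pf\|^2=\langle T^{-p+p}f,f\rangle=\plimgG{(-p+p)}{g}{G}\widehat{\sigma_f}(g)$, which follows from Proposition \ref{2.prop:NormalOperator}(i) and (\ref{2.eq:SumIsComposition}). Since $-p+p\in\beta G+\mathcal U$ for $p\in\mathcal U$, membership of $\sigma_f$ in $\mathcal M_{\mathcal U}$ forces $T^pf=0$. This gives $\mathcal U$-mixing.

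It remains to check that $\mathcal M_{\mathcal U}(\widehat G)$ really is closed under absolute continuity, which the reduction above used. The same argument handles it: $\sigma\in\mathcal M_{\mathcal U}$ iff the multiplication representation on $L^2(\widehat G,\sigma)$ satisfies $U^p1=0$ for $p\in\mathcal U$. This in turn gives $U^p h=0$ for all $h\in L^2(\sigma)$, because $U^p$ commutes with multiplication by bounded functions (a weak limit of multiplication operators) and $L^\infty$ is dense. Any $\nu\ll\sigma$ has the form $|h|^2\sigma$, and then $\widehat\nu(g)=\langle U^gh,h\rangle$, so its limits along $\beta G+\mathcal U$ vanish.
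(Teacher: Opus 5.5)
Your proposal is correct and follows essentially the paper's route: (i) is classical, (iv) is the definition, (iii) is the case $\mathcal U=G^*$ of (ii), and (ii) uses Proposition \ref{prop:DifferentFamiliesForSameUmixing} to pass to $\beta G+\mathcal U$ and then evaluates $\langle T^qf,f\rangle$. The only variations are minor. In the converse of (ii) you use $\|T^pf\|^2=\langle T^{-p+p}f,f\rangle$ (via $T^{-p}=(T^p)^*$), whereas the paper shows $\langle T^pf,T^hf\rangle=\langle T^{-h+p}f,f\rangle=0$ for all $h\in G$. You also verify that $\mathcal M_{\mathcal U}(\widehat G)$ is closed under absolute continuity, which the paper only asserts.
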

\begin{proof}
    As mentioned in Section \ref{ssec:Spectrum}, part (i) is a classical consequence of the spectral theorem, and part (iv) follows by definition.
    Since $\mathcal{M}_r(\widehat{G}) = \mathcal{M}_{G^*}(\widehat{G})$, we see that part (iii) follows from part (ii).

    We now prove part (ii). For every $f\in L_0^2(X,\mu)$, let $\mu_f$ denote the only member of $\mathcal M(\widehat G)$ defined by the property that for every $g\in G$, 
      $$
\langle T^g f,f\rangle =\int_{\widehat G}\chi(g)\text{d}\mu_f(\chi).
    $$
    
    First assume that $\mathcal X$ is $\mathcal U$-mixing. 
    \cref{prop:DifferentFamiliesForSameUmixing} tells us that $\mathcal{X}$ is also $(\beta G+\mathcal{U})$-mixing.
    Since $\mathcal X$ is separable, there exist an $f\in L_0^2(X,\mu)$ for which $\mu_f\in[\mathcal X]$. 
    Thus, for any $q\in \beta G+\mathcal U$, $\plimgG{q}{g}{G}\widehat\mu_f(g)=\plimgG{q}{g}{G} \langle T^g f,f\rangle=0$. 
    It follows that $\mu_f\in \mathcal M_{\mathcal U}(\widehat G)$ and, so, $[\mathcal X]\subseteq \mathcal M_{\mathcal U}(\widehat G)$.\\
    
    Suppose now that $[\mathcal X]\subseteq \mathcal M_\mathcal U(\widehat G)$. It suffices to show that for every $f\in L_0^2(X,\mu)$, every $p\in \mathcal U$, and every $h\in G$, $\langle T^pf, T^h f\rangle=0$. 
    Since $[\mathcal X]\subseteq \mathcal M_\mathcal U(\widehat G)$, we have that $\mu_f\in \mathcal M_\mathcal U(\widehat G)$ and so, for any $p\in \mathcal U$ and any $h\in G$,
    $$
\langle T^pf,T^{h}f\rangle=\langle T^{-h+p}f,f\rangle=\plimgG{(-h+p)}{g}{G}\langle T^gf,f\rangle=\plimgG{(-h+p)}{g}{G}\widehat\mu(g)=0.
    $$
    Thus, $\mathcal X$ is $\mathcal U$-mixing.
\end{proof}

The next result is a partial generalization of the classical fact that spectral disjointness implies Furstenberg disjointness.

\begin{theorem}[cf. {\cite[Theorem 2]{Lemanczyk_Parreau}}]\label{SpectralDisjointnessImpliesMultiplier}
    Let $I \subseteq \mathcal{M}(\widehat{G})$ be an ideal and let $\mathcal{F}_I$ denote the class of systems whose maximal spectral type is absolutely continuous with respect to some member of $I$.
    If $\mathcal{X}$ is a system whose maximal spectral type is mutually singular with every member of $I$, then $\mathcal{X} \in \mathscr{M}(\mathcal{F}_I^\perp)$.
\end{theorem}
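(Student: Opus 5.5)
Fix $\mathcal{Y} \in \mathcal{F}_I^\perp$, a joining $\lambda$ of $\mathcal{X}$ and $\mathcal{Y}$, and $\mathcal{Z} \in \mathcal{F}_I$. The goal is to show that $(\mathcal{X}\vee\mathcal{Y})\perp\mathcal{Z}$. I will argue by spectral analysis. Let $\eta$ be any joining of $\mathcal{X}\vee\mathcal{Y}$ with $\mathcal{Z}$, so $\eta$ lives on $X\times Y\times Z$. Its projection to $Y\times Z$ is a joining of $\mathcal{Y}$ and $\mathcal{Z}$, hence equals $\nu\otimes\rho$. It then suffices to show, for $f\in L^\infty(X)$, $g\in L^\infty(Y)$ and $h\in L^\infty_0(Z)$, that
\[
\int f\otimes g\otimes h\,d\eta = 0,
\]
since products $f\otimes g$ span a dense subalgebra of $L^2(\lambda)$.

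\textbf{Step 1: an intertwiner into $L^2(\mu)$.} Consider the operator $J: L^2_0(Z,\rho)\to L^2(X,\mu)$ obtained by conditioning, under $\eta$, the function $(g\otimes h)$ onto the $X$-coordinate. More cleanly, fix $g$ and define $J_g h := \mathbb{E}_\eta[g\otimes h \mid X]$. This map does not intertwine with $g$ fixed. I would therefore instead work with the Hilbert space $\mathcal{K}$, the closed span of $\{S^k g\otimes R^k h\}$ in $L^2(Y\times Z,\nu\otimes\rho)$. Because the $Y\times Z$ marginal is the product measure, the spectral measure of $g\otimes h$ under $S\times R$ is $\sigma_g*\sigma_h$.

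\textbf{Step 2: the key spectral claim.} Since $\sigma_h\ll$ some member of $I$ and $I$ is an ideal closed under absolute continuity, $\sigma_g*\sigma_h\in I$. Hence $\sigma_g*\sigma_h$ is singular to $[\mathcal{X}]$ (the non-constant part; the constant component is handled because $h$ has mean zero, so $\sigma_g*\sigma_h$ has no atom coming from constants in a way that matters, and $\int f\,d\mu$ pairs against $\int g\otimes h=0$). The conditional expectation map $E:L^2(Y\times Z,\nu\otimes\rho)\to L^2(X,\mu)$ induced by $\eta$ is a Markov operator intertwining $S\times R$ with $T$. It maps the cyclic space of $g\otimes h$ into a cyclic space whose spectral measure is absolutely continuous with respect to $\sigma_g*\sigma_h$. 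Subtracting means, the image lies in $L^2_0(\mu)$. By mutual singularity with $[\mathcal{X}]$, this image is $0$. Therefore
\[
\int f\otimes g\otimes h\,d\eta=\langle f, E(g\otimes h)\rangle = \int f\,d\mu\int g\otimes h\,d(\nu\otimes\rho) = 0.
\]

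\textbf{Step 3: conclusion.} Combining the vanishing in Step 2 with the identification of the $Y\times Z$ marginal in the setup gives $\eta=\lambda\otimes\rho$. This shows $\mathcal{X}\vee\mathcal{Y}\in\mathcal{F}_I^\perp$.

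The main obstacle is the justification in Step 2 that the convolution $\sigma_g*\sigma_h$ lies in $I$, or is at least absolutely continuous to a member of $I$. This uses the ideal property of $I$ together with the fact that $\sigma_h\ll\kappa\in I$ implies $\sigma_g*\sigma_h\ll\sigma_g*\kappa\in I$; for this I would also use that $I$ is hereditary under $\ll$, which is the property noted for the four ideals of interest. A second point needing care is the constant part: $\sigma_g$ may contain an atom at the trivial character. That atom only contributes $\int g\cdot\sigma_h$, which is still in $I$, so it causes no problem.
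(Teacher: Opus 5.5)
Your argument is correct and is essentially the paper's proof: both use that the $Y\times Z$ marginal of the three-fold joining is $\nu\otimes\rho$ to identify the spectral measure of $g\otimes h$ as $\sigma_g*\sigma_h$, place it (up to absolute continuity) in $I$ via the ideal property, and conclude from its singularity with $[\mathcal{X}]$. The differences are only in packaging. You push $g\otimes h$ through the intertwining Markov operator into $L^2_0(X,\mu)$, whereas the paper centers all three functions and uses orthogonality of vectors with mutually singular spectral measures in $L^2$ of the three-fold joining; also, your bound $\sigma_g*\sigma_h\ll\sigma_g*\kappa$ with $\kappa\in I$ already suffices, so the hereditary property of $I$ you mention is not actually needed.
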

\begin{proof}
    Let $\mathcal{Z} \in \mathcal{F}_I$ and $\mathcal{Y} \in \mathcal{F}_I^\perp$ both be arbitrary.
    Let $\eta$ be a joining of $\mu$ and $\nu$, let $\lambda$ be a joining of $\eta$ and $\rho$, and let $f_1\in L^\infty(X,\mu)$, $f_2\in L^\infty(Y,\nu)$, $f_3\in L^\infty(Z,\rho)$.
    Notice that the restrictions of $\lambda$ to $X\times Z$ and $Y\times Z$ are joinings of $\mathcal X$ and $\mathcal Z$ and $\mathcal Y$ and $\mathcal Z$, respectively, hence each of them is the corresponding product measure. 
    We will show that 
    \begin{equation*}
    \int_{X\times Y\times Z}f_1(x)f_2(y)f_3(z)\text{d}\lambda(x,y,z)=\int_{X\times Y}f_1(x)f_2(y)\text{d}\eta(x,y)\int_Zf_3(z)\text{d}\rho(z).
    \end{equation*} 
    If $f_2=\int_Yf_2\text{d}\nu$, we have that
    \begin{multline*}
    \int_{X\times Y\times Z}f_1(x)f_2(y)f_3(z)\text{d}\lambda(x,y,z)=\int_Y f_2(y)\text{d}\nu(y)\int_{X\times Z}f_1(x)f_3(z)\text{d}\lambda(x,y,z)\\
    =\int_X f_1\text{d}\mu\int_Yf_2\text{d}\nu\int_Zf_3\text{d}\rho=\int_{X\times Z}f_1(x)f_2(y)\text{d}\eta(x,y)\int_Zf_3(z)\text{d}\rho(z).
    \end{multline*}
Since
\begin{multline*}
\int_{X\times Y\times Z}f_1(x)f_2(y)f_3(z)\text{d}\lambda(x,y,z)=\\
\int_{X\times Y\times Z}f_1(x)\left(f_2(y)-\int_Yf_2\text{d}\nu\right)f_3(z)\text{d}\lambda(x,y,z)+\int_Y f_2(y)\text{d}\nu(y)\int_{X\times Z}f_1(x)f_3(z)\text{d}\lambda(x,y,z),
\end{multline*}
we can assume without loss of generality that $f_2\neq 0$ and that  $\int_Y f_2\text{d}\nu=0$.
By employing similar arguments we can further assume that $f_1,f_3$ are non-zero and satisfy $\int_X f_1\text{d}\mu,\int_Zf_3\text{d}\rho=0$.\\
 We now observe that 
    \begin{equation}
        \langle (S^g\times R^g)f_2(y)f_3(z),f_2(y)f_3(z)\rangle = \langle S^gf_2,f_2\rangle\langle R^gf_3,f_3\rangle.
    \end{equation}
    Consequently, the spectral measure $\sigma_{(f_2,f_3)}$ of $f_2(y)f_3(z)$ is the convolution of the spectral measures $\sigma_{f_2}$ and $\sigma_{f_3}$ of $f_2$ and $f_3$ respectively. 
    Since $\sigma_{f_3} \in I$, we see that $\sigma_{(f_2,f_3)} \in I$, so the vectors $f_1,f_2\otimes f_3\in L^2(\lambda)$ are orthogonal. Thus,
    \[
\int_{X\times Y\times Z}f_1(x)f_2(y)f_3(z)\text{d}\lambda(x,y,z)=\langle f_1,f_2\otimes f_3 \rangle_{L^2(\lambda)}=0
=\int_{X\times Y}f_1(x)f_2(y)\text{d}\eta(x,y)\int_Zf_3(z)\text{d}\rho(z),
    \]
    completing the proof. 
\end{proof}

\begin{remark}
    Theorem \ref{SpectralDisjointnessImpliesMultiplier} applied to $\mathcal{I} = \mathcal{M}_c(\widehat{G})$ recovers the classical fact that $\mathcal{K} \subseteq \mathscr{M}(\mathcal{W}^\perp)$.
    Theorem \ref{SpectralDisjointnessImpliesMultiplier} applied to $\mathcal{I} = \mathcal{M}_{\mathcal{U}}(\widehat{G})$ tells us that if $(L^2(X,\mu),T)$ is generated by $\mathcal{U}$, then $\mathcal{X}$ is a multiplier for the class of $\mathcal{U}$-mixing systems.
    A special case of this is that rigid systems are multipliers for the class of mildly mixing systems.
    These results are all also corollaries of Theorem \ref{thm:ParreauFactorsAreMultipliers}, but the next result is only a corollary of Theorem \ref{SpectralDisjointnessImpliesMultiplier} and not of Theorem \ref{thm:ParreauFactorsAreMultipliers}.
\end{remark}
As a corollary, we recover the following result of Lema\'nczyk.
\begin{corollary}[cf. {\cite{Lemanczyk_IET,Lemanczyk_Parreau}}]\label{cor:SingularSpectrumMultipliers}
    The class of systems with singular spectrum is contained in $\mathscr{M}(\mathcal{A}^\perp)$.
\end{corollary}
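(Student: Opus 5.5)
The natural plan is to apply Theorem \ref{SpectralDisjointnessImpliesMultiplier} with the ideal $I = \mathcal{M}_\mathcal{A}(\widehat{G})$ of measures absolutely continuous with respect to the Haar measure $\lambda$ of $\widehat{G}$. Two things must be checked. First, $\mathcal{M}_\mathcal{A}(\widehat{G})$ is an ideal for convolution. Second, with this choice, $\mathcal{F}_I$ is exactly the class $\mathcal{A}$ of systems with absolutely continuous spectrum, and the hypothesis of Theorem \ref{SpectralDisjointnessImpliesMultiplier} holds precisely for systems with singular spectrum. Once both are in place, the theorem gives $\mathcal{X} \in \mathscr{M}(\mathcal{F}_I^\perp) = \mathscr{M}(\mathcal{A}^\perp)$ for every singular-spectrum system $\mathcal{X}$.

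For the ideal property, take $\sigma \ll \lambda$ and any finite measure $\tau$. For a Borel set $E$,
\[
(\sigma*\tau)(E) = \int_{\widehat G}\sigma(E\chi^{-1})\,\mathrm{d}\tau(\chi).
\]
If $\lambda(E)=0$, translation invariance of Haar measure gives $\lambda(E\chi^{-1})=0$ for every $\chi$. Hence each integrand vanishes, and so $\sigma*\tau \ll \lambda$. Closure under sums and scalar multiples is immediate. This is also the property actually used inside the proof of Theorem \ref{SpectralDisjointnessImpliesMultiplier}, namely that $\sigma_{f_2}*\sigma_{f_3}\in I$ when $\sigma_{f_3}\in I$.

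For the identification of classes: a system has maximal spectral type absolutely continuous with respect to some member of $I$ if and only if its maximal spectral type is absolutely continuous with respect to $\lambda$. This holds because every member of $I$ is $\ll\lambda$, and $\lambda$ itself lies in $I$ (Haar measure is finite, since $\widehat G$ is compact). So $\mathcal{F}_I=\mathcal{A}$. Likewise, a measure is mutually singular with every member of $I$ exactly when it is singular with respect to $\lambda$. Therefore $\mathcal{X}$ having singular spectrum, i.e.\ $[\mathcal{X}]$ consisting of measures singular to $\lambda$, is exactly the hypothesis of Theorem \ref{SpectralDisjointnessImpliesMultiplier}.

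No step here is a serious obstacle. The only point requiring a moment's care is the convolution-ideal verification, and in particular that translation invariance of $\lambda$ makes the integrand vanish pointwise rather than merely almost everywhere.
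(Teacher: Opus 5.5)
Your proposal is correct and takes the same route as the paper, which obtains the corollary directly from Theorem~\ref{SpectralDisjointnessImpliesMultiplier} applied to the ideal $I=\mathcal{M}_\mathcal{A}(\widehat{G})$. With this choice $\mathcal{F}_I=\mathcal{A}$, and the hypothesis of the theorem is exactly singular spectrum. Your explicit checks of the convolution-ideal property and of the identification of the classes spell out what the paper leaves implicit.
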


The following theorem is similar to a result that appears in \cite{Lemanczyk_IET,Lemanczyk_Parreau}.
\begin{theorem}\label{thm:ParreauFactorsAreMultipliers}
    Let $\mathcal{U} \subseteq G^*$.
    If $\mathcal{X}$ is a $\mathcal{U}$-generated system and $\mathcal{Z}$ is a $\mathcal{U}$-mixing system, then $\mathcal{X} \in \mathscr{M}(\{\mathcal{Z}\}^\perp)$.
\end{theorem}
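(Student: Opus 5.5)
The plan is to show that for every $\mathcal Y\in\{\mathcal Z\}^\perp$, every joining $\eta$ of $\mathcal X$ and $\mathcal Y$, and every joining $\lambda$ of $(X\times Y,\eta)$ with $\mathcal Z$, we have $\lambda=\eta\otimes\rho$. As in the proof of Theorem \ref{SpectralDisjointnessImpliesMultiplier}, it suffices to prove
$$\int f_1(x)f_2(y)f_3(z)\,\mathrm d\lambda=0$$
for bounded $f_1,f_2,f_3$ with $\int_Z f_3\,\mathrm d\rho=0$. The functions $f_1$ can moreover be taken from a set whose products with $L^\infty(Y)$ span a dense subspace. Write $W=(W^g)_{g\in G}$ for the diagonal action on $L^2(\lambda)$, and $H_m:=\bigcap_{p\in\mathcal U}\mathrm{Ker}(W^p)$ for the $\mathcal U$-mixing part of $L^2(\lambda)$.

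\emph{Step 1: spectral input.} For $G\in L^\infty(Y)$ and $f_3\in L^2_0(Z)$, the function $G\otimes f_3$ has spectral measure $\sigma_G*\sigma_{f_3}$, because the restriction of $\lambda$ to $Y\times Z$ is $\nu\otimes\rho$ since $\mathcal Y\perp\mathcal Z$. The measure $\sigma_{f_3}$ lies in the ideal $\mathcal M_{\mathcal U}(\widehat G)$ by the theorem characterizing $\mathcal U$-mixing spectrally. Since $\mathcal M_{\mathcal U}(\widehat G)$ is an ideal, $\sigma_G*\sigma_{f_3}\in\mathcal M_{\mathcal U}(\widehat G)$, so $G\otimes f_3\in H_m$. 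By Proposition \ref{2.prop:NormalOperator}(ii), this vector is also killed by $W^{-p}$ for every $p\in\mathcal U$.

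\emph{Step 2: single generator.} Take $f_1=T^ph$ with $p\in\mathcal U$ and $h\in L^\infty(X)$. Then
$$\langle T^ph\otimes 1,\ \overline{G\otimes f_3}\rangle_{L^2(\lambda)}=\langle h,\ W^{-p}(\overline{G\otimes f_3})\rangle=0 .$$
Here we use that $T^ph$, viewed on the joining, equals $W^p(h\otimes1)$, since the $p$-limit commutes with the factor embedding. This settles every $f_1$ lying in $\overline{\mathrm{Span}}\bigcup_{p}\mathrm{Im}(T^p)$.

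\emph{Step 3: products, the main obstacle.} Because $\mathcal X$ is only $\mathcal U$-generated, $L^\infty(X)$ is generated as an algebra by such $T^ph$ but is not spanned by them (Remark \ref{rem:PFactorBasics}), so Step 2 alone does not suffice. I would induct on the number $k$ of factors in $f_1=\prod_{i\le k}T^{p_i}h_i$, with the strengthened claim that for all such $F$ and all $G\in L^\infty(Y)$,
$$F\cdot G\cdot f_3\in H_m .$$
Granting the claim for $F$, the case $F'=T^{p}h\cdot F$ would follow as in Step 2, pairing $T^ph$ against $\overline{F G f_3}$.

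The difficulty is that multiplication does not commute with weak $p$-limits. To get around it, I would follow Parreau and pass through a relatively independent joining. Let $\widetilde\lambda$ be the relatively independent self-joining of $\lambda$ over the $\mathcal X\vee\mathcal Y$ coordinate. On this system the function $F$ can be ``split'': one copy of $\overline{G f_3}$ is paired against each generator separately. The point is that the pair $(\mathcal Y\text{-copy},\mathcal Z\text{-copies})$ still carries the product structure coming from $\mathcal Y\perp\mathcal Z$, and tensor products of $\mathcal U$-mixing vectors remain $\mathcal U$-mixing by the ideal property. Each generator $T^{p_i}h_i$ would then be handled by Step 2 on the enlarged system.

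If that becomes unwieldy, the fallback is the van der Corput route mentioned in the introduction. One expands $\bigl\|\,p\text{-}\lim_g W^{g}(\cdots)\bigr\|^2$ and uses Proposition \ref{prop:DifferentFamiliesForSameUmixing}, specifically that $\mathcal U$-mixing implies $\Delta(\mathcal U)$- and $(\beta G+\mathcal U)$-mixing, to absorb the shifts $-p+p$ produced by the differencing.

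Once the claim holds for all finite products, density of their span, combined with $L^\infty(Y)$, in $L^2(X\times Y,\eta)$ gives $\mathbb E_\lambda[f_3\mid \mathscr B\otimes\mathscr A]=0$, hence $\lambda=\eta\otimes\rho$.
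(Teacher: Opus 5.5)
Your Steps 1 and 2 are correct and are exactly the paper's base case $n=1$. The gap is Step 3, which is the whole content of the theorem.

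Write $h\in L^\infty(Y,\nu)$ for your $G$, which clashes with the group. Your strengthened inductive claim does not propagate. From $F h f_3\in H_m$ you only get
\[
\int T^pf\cdot F\cdot h\cdot f_3\,d\lambda=\bigl\langle f\otimes 1,\,W^{-p}\,\overline{Fhf_3}\bigr\rangle=0,
\]
which is the vanishing of one integral. The next stage needs the stronger statement $T^pf\cdot F\cdot h\cdot f_3\in H_m$. That means $\int T^{g}(T^pf\cdot F)\,S^gh\,R^gf_3\,\overline{\xi}\,d\lambda\to 0$ along every $q\in\mathcal U$, for arbitrary $\xi\in L^2(\lambda)$. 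On $\lambda$ itself this is no easier than $\lambda=\eta\otimes\rho$, so it cannot drive the induction.

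Your proposed repair does not supply the missing mechanism. The relatively independent self-joining of $\lambda$ over $X\times Y$ duplicates the $Z$-coordinate; it computes quantities like $\|\mathbb E_\lambda[f_3\mid\mathscr B\otimes\mathscr A]\|^2$. But all the factors $T^{p_i}h_i$ of $F$ still live on one and the same $X$-coordinate, so nothing is split. The van der Corput fallback, as written, is only a gesture.

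The missing idea is to split on the $X$ side. Since $G$ is abelian, $T^p$ is the Markov operator of a self-joining $\mu_p$ of $\mathcal X$, a weak$^*$ limit along $p$ of graph joinings. Let $\gamma_{\vec p}$ be the relatively independent joining, over a base copy of $X$, of $\mu_{p_1},\dots,\mu_{p_n}$ and $\lambda$. Then
\[
\int f(x)\prod_{j=1}^n f_j(x_j)\,h(y)\,k(z)\,d\gamma_{\vec p}=\int f\cdot\prod_{j=1}^n T^{p_j}f_j\cdot h\cdot k\,d\lambda .
\]
So the product $\prod_j T^{p_j}f_j$ becomes the conditional expectation, onto the base coordinate, of the tensor $\prod_j f_j(x_j)$.

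The induction hypothesis is vanishing for $n$ generators, for all $f_j$, $h$ and mean-zero $k$. It says exactly that the projection $\widetilde\gamma_{\vec p}$ of $\gamma_{\vec p}$ to $X^n\times Y\times Z$ equals $\gamma'_{\vec p}\otimes\rho$. On this product, $\prod_j f_j(x_j)\,h(y)\,k(z)$ really is killed by $W^{-p}$; this is where your strengthened claim actually holds. You then move $T^{p_{n+1}}$ off the base coordinate against it, exactly as in your Step 2.

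If you prefer the van der Corput route of Section \ref{sec:vdC}, two points are needed. First, $\int\prod_j T^{p_j}f_j\cdot\Phi\,d\lambda$ equals an iterated limit along $p_\ell,\dots,p_1$ of $\int\prod_j T^{g_j}f_j\cdot\Phi\,d\lambda$. Second, after differencing in $g_\ell$, the induction hypothesis must be the factorized identity of Lemma \ref{lem:IndependentJoiningsViaVdC}, not mere vanishing. This is because it is applied to $f_jT^{-g}f_j$, $hS^{-g}h$ and $kR^{-g}k$, and the last of these is no longer mean-zero.
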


\begin{proof}
    Our proof a generalization of Parreau's proof \cite{ParreauFactor} that $\mathcal{X}\perp\mathcal{Z}$.
    Let $\mathcal{Y} \in \mathcal{Z}^\perp$ be arbitrary.
    Let $\eta$ be a joining of $\mu$ and $\nu$, and let $\lambda$ be a joining of $\eta$ and $\rho$.
    Since $\mathcal{Y}\perp\mathcal{Z}$, we see that the projection of $\lambda$ onto $Y\times Z$ is a joining of $\nu$ and $\rho$, hence it is $\nu\otimes\rho$.
    Consequently, $\lambda$ is also a joining of $\mu$ and $\nu\otimes\rho$, so let $J:L^2(X,\mu)\rightarrow L^2(Y\times Z,\nu\otimes\rho)$ be the corresponding Markov operator.
    For $p \in \mathcal{U}$, let $\mu_p = \plimgG{p}{n}{\Z}(I\times T^n)^*\mu^2$, and observe that $T^p:L^2(X,\mu)\rightarrow L^2(X,\mu)$ is the Markov operator correpsonding to the joining $\mu_p$.
    We want to show that $\lambda = \eta\otimes\rho$, so it suffices to show that for all $p_1,\cdots,p_n \in \mathcal{U}, f_1,\cdots,f_n \in L^\infty(X,\mu), h \in L^\infty(Y,\nu),$ and $k \in L^\infty(Z,\rho)$ with $\int_Zkd\rho = 0$, we have

    \begin{equation}\label{eq:MultiplierDisjointnessGoal}
        \int_{X\times Y\times Z}\left(\prod_{j = 1}^nT^{p_j}f_j(x)\right)h(y)k(z)d\lambda(x,y,z) = 0.
    \end{equation}

    Fix $n \ge 1$ and $\vec{p} = (p_1,\cdots,p_n) \in \mathcal{U}^n$.
    We define a probability measure $\gamma_{\vec{p}}$ on $X\times X^n\times Y\times Z$ as follows.
    Firstly, take one copy of $(X,\mathscr{B},\mu,(T^g)_{g \in G})$ called the base coordinate.
    Then take $n$ additional copies of $(X,\mathscr{B},\mu,(T^g)_{g \in G})$ and attach the $j^{\text{th}}$-copy to the base coordinate via the self-joining $\mu_{p_j}$.
    Finally, attach one copy of $Y\times Z$ to the base coordinate via the joining $\lambda$.
    We take the relatively independent joining of these joinings over the base.
    By construction of $\gamma_{\vec{p}}$, for any bounded functions $f,f_1,\cdots,f_n \in L^\infty(X,\mu), h \in L^\infty(Y,\nu),$ and $k \in L^\infty(Z,\rho)$, we have

    \begin{alignat}{2}
        &\int_{X^{n+1}\times Y\times Z}f(x)\left(\prod_{j = 1}^nf_j(x_j)\right)h(y)k(z)d\gamma_{{\vec p}}(x,x_1,\cdots,x_n,y,z)\label{eq:RelIndJoining}\\
        =&\int_{X}f(x)\left(\prod_{j = 1}^nT^{p_j}f_j(x)\right)J^*(h(y)k(z))(x)d\mu(x)\notag\\
        =&\int_{X\times Y\times Z}f(x)\left(\prod_{j = 1}^nT^{p_j}f_j(x)\right)h(y)k(z)d\rho(x,y,z).\notag      
    \end{alignat}

    We will now prove Equation \ref{eq:MultiplierDisjointnessGoal} by induction on $n$.
    We begin with the base case of $n = 1$.
    Let $p \in \mathcal{U}, f \in L^\infty(X,\mu), h \in L^\infty(Y,\nu),$ and $k \in L^\infty(Z,\rho)$ be such that $\int_Zkd\rho = 0$.
    We see that

    \begin{alignat}{2}
        \int_{X\times Y\times Z}T^pf(x)h(y)k(z)d\lambda(x,y,z) &= \int_{X\times Y\times Z}f(x)S^{-p}h(y)R^{-p}k(z)d\lambda(x,y,z)\\
        &= \int_{Y\times Z}(Jf)(y,z)S^{-p}h(y)R^{-p}k(z)d\nu(y)d\rho(z) = 0.\notag
    \end{alignat}

    Now assume that Equation \eqref{eq:MultiplierDisjointnessGoal} holds for some $n = n_0 \ge 1$, and we will show that it also holds for $n+1$.
    Fix some $\vec{p} = (p_1,\cdots,p_n) \in \mathcal{U}^n$ and let $\widetilde{\gamma}_{\vec{p}}$ denote the projection of $\gamma_{{\vec p}}$ onto $X^n\times Y\times Z$ in which we forget the base coordinate.
    Since $\gamma_{{\vec p}}$ is a joining of $\mu$ and $\widetilde{\gamma}_{{\vec p}}$, let $M:L^2(X,\mu)\rightarrow L^2(X^n\times Y\times Z,\widetilde{\gamma}_{{\vec p}})$ denote the corresponding Markov operator.
    We see that for any $f_1,\cdots,f_n \in L^\infty(X,\mu), h \in L^\infty(Y,\nu)$, and $k \in L^\infty(Z,\rho)$ with $\int_Zkd\rho = 0$, we have

    \begin{alignat}{2}
        \int_{X^n\times Y\times Z}\left(\prod_{j = 1}^nf_j(x_j)\right)h(y)k(z)d\widetilde{\gamma}_{{\vec p}} &= \int_{X^{n+1}\times Y\times Z}1(x)\left(\prod_{j = 1}^{n-1}f_j(x_j)\right)h(y)k(z)d\gamma_{{\vec p}}\\
        &= \int_{X\times Y\times Z}\left(\prod_{j = 1}^nT^{p_j}f_j(x)\right)h(y)k(z)d\lambda(x,y,z) = 0,\notag
    \end{alignat}
    hence $\widetilde{\gamma}_{\vec{p}} = \gamma'_{\vec{p}}\otimes\rho$ for some probability measure $\gamma'_{\vec{p}}$ on $X^{n}\times Y$.

    We now see that for any $p_1,\cdots,p_n,p_{n+1} \in \mathcal{U}, f_1,\cdots,f_{n+1} \in L^\infty(X,\mu), h \in L^\infty(Y,\nu),$ and $k \in L^\infty(Z,\rho)$ with $\int_Zkd\rho = 0$, we may define $\vec{p} = (p_1,\cdots,p_n)$ and observe that

    \begin{alignat*}{2}
        &\int_{X^{n+1}\times Y\times Z}\left(\prod_{j = 1}^{n+1}T^{p_j}f_j(x)\right)h(y)k(z)d\lambda(x,y,z)\\
        =&\int_{X^{n+1}\times Y\times Z}T^{p_{n+1}}f_{n+1}(x_{n+1})\left(\prod_{j = 1}^nf_j(x_j)\right)h(y)k(z)d\gamma_{{\vec p}}\notag\\
        = &\int_{X^{n+1}\times Y\times Z}f_{n+1}(x_{n+1})\left(\prod_{j = 1}^nT^{-p_{n+1}}f_j(x_j)\right)S^{-p_{n+1}}h(y)R^{-p_{n+1}}k(z)d\gamma_{{\vec p}}\notag\\
        =& \int_{X^n\times Y\times Z}Mf_{n+1}(x_1,\cdots,x_n,y,z)\left(\prod_{j = 1}^nT^{-p_{n+1}}f_j(x_j)\right)S^{-p_{n+1}}h(y)R^{-p_{n+1}}k(z)d\widetilde{\gamma}_{{\vec p}} = 0,
    \end{alignat*}
    where the last inequality follows from the fact that $\widetilde{\gamma}_{{\vec p}} = \gamma'_{{\vec p}}\otimes\rho$ and that $\mathcal{Z}$ is $\mathcal{U}$-mixing.
\end{proof}

\begin{corollary}\label{cor:MainMultiplierResult}
    \ 
    \begin{enumerate}[(i)]
        \item $\overline{\mathcal{G}_\mathcal{U}} \subseteq \mathscr{M}(\mathcal{S}_\mathcal{U}^\perp)$, and in particular, $\overline{\mathcal{G}} \subseteq \mathscr{M}(\mathcal{S}^\perp)$.
        
        \item $\mathcal{R} \subseteq \overline{\mathcal{G}_{E(G^*)}} \subseteq \mathscr{M}(\mathcal{M}^\perp)$.

        \item If $\mathcal{X}$ is partially rigid, then $\mathcal{X} \in \mathscr{M}(\mathcal{S}^\perp)$.
    \end{enumerate}
\end{corollary}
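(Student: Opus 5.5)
For (i), the plan is to combine \cref{thm:ParreauFactorsAreMultipliers} with the fact that $\mathscr{M}(\mathscr{D}^\perp)$ is a characteristic class. First I will show $\mathcal{G}_\mathcal{U}\subseteq\mathscr{M}(\mathcal{S}_\mathcal{U}^\perp)$. Let $\mathcal{X}\in\mathcal{G}_\mathcal{U}$, $\mathcal{Y}\in\mathcal{S}_\mathcal{U}^\perp$, and let $\mathcal{X}\vee\mathcal{Y}$ be a joining. For each $\mathcal{Z}\in\mathcal{S}_\mathcal{U}$ we have $\mathcal{Y}\in\{\mathcal{Z}\}^\perp$. Hence \cref{thm:ParreauFactorsAreMultipliers} gives $\mathcal{X}\vee\mathcal{Y}\perp\mathcal{Z}$. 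Since $\mathcal{Z}$ was arbitrary, $\mathcal{X}\vee\mathcal{Y}\in\mathcal{S}_\mathcal{U}^\perp$.

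Next, $\mathscr{M}(\mathcal{S}_\mathcal{U}^\perp)$ is a characteristic class \cite[Section 2.1]{ArithmeticFunctionsOrthogonalToDeterminism}, so it is closed under factors. By \cref{rem:QuasiCharacteristicClass}, $\overline{\mathcal{G}_\mathcal{U}}$ is precisely the class of factors of members of $\mathcal{G}_\mathcal{U}$, which gives the inclusion $\overline{\mathcal{G}_\mathcal{U}}\subseteq\mathscr{M}(\mathcal{S}_\mathcal{U}^\perp)$. The particular case follows by taking $\mathcal{U}=G^*$, since $\mathcal{S}_{G^*}=\mathcal{S}$. The same argument with $\mathcal{U}=E(G^*)$ gives $\overline{\mathcal{G}_{E(G^*)}}\subseteq\mathscr{M}(\mathcal{M}^\perp)$, because \cref{3.prop:CharMildMixing} gives $\mathcal{S}_{E(G^*)}=\mathcal{M}$.

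For the first inclusion in (ii), let $\mathcal{X}$ be rigid along a sequence. By \cref{3.lem:SortOfEllisLemma}, applied to all of $L^2$ simultaneously via the closed subsemigroup $\{q\in G^*: T^q=I\}$ together with Ellis–Numakura, there is $p\in E(G^*)$ with $T^p=I$. Then every $f\in L^\infty$ equals $T^pf$, so $\mathcal{X}\in\mathcal{G}_p\subseteq\mathcal{G}_{E(G^*)}$. This is also noted in \cref{rem:CharacteristicClosureOfRigid}. The one point needing care is that rigidity is defined along a single sequence for all vectors, so the semigroup of $q$ with $T^q=I$ is nonempty and closed; this holds since $T^q=I$ is a closed condition in the weak topology.

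For (iii), suppose $\mathcal{X}$ is $\alpha$-rigid along some $p\in G^*$. If $\alpha=1$, then $T^p$ is a Markov operator with $\langle T^p1_A,1_A\rangle=\mu(A)$, so $T^p=I$ and $\mathcal{X}\in\mathcal{G}_p\subseteq\mathcal{G}$. Otherwise, by \cref{thm:PartiallyRigidFiniteFibers}, $\mathcal{X}$ is a finite extension of $\mathcal{X}^p$. By \cref{lem:StabilityOfParreauFactors}, $\mathcal{X}^p\in\mathcal{G}_{\{p\}}\subseteq\mathcal{G}$, so by (i) it lies in $\mathscr{M}(\mathcal{S}^\perp)$. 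Finally, strongly mixing systems are weakly mixing, so \cref{cor:ClosedUnderCompactExtensions}(ii) with $\mathscr{C}=\mathcal{S}$ gives that $\mathscr{M}(\mathcal{S}^\perp)$ is closed under finite extensions, hence $\mathcal{X}\in\mathscr{M}(\mathcal{S}^\perp)$.

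The main point to verify is that step: that a finite extension in the sense of \cref{thm:PartiallyRigidFiniteFibers} (non-ergodic, fibre sizes bounded by $\alpha^{-1}$) falls under \cref{cor:ClosedUnderCompactExtensions}. This is handled by its ergodic-decomposition proof.
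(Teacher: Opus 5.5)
Your proposal is correct and follows essentially the same route as the paper. Part (i) comes from \cref{thm:ParreauFactorsAreMultipliers} together with the fact that the characteristic class $\mathscr{M}(\mathcal{S}_\mathcal{U}^\perp)$ is closed under factors. Part (ii) comes from $\mathcal{R}\subseteq\bigcup_{p\in E(G^*)}\mathcal{G}_p$ and $\mathcal{M}=\mathcal{S}_{E(G^*)}$, and part (iii) from \cref{thm:PartiallyRigidFiniteFibers} combined with \cref{cor:ClosedUnderCompactExtensions}. Your extra details only make explicit what the paper leaves implicit: the Ellis--Numakura argument giving $T^p=I$, the use of \cref{lem:StabilityOfParreauFactors} to get $\mathcal{X}^p\in\mathcal{G}$, and the separate $\alpha=1$ case. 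The $\alpha=1$ case could also be absorbed, since a $1$-rigid system is $\alpha$-rigid for every $\alpha<1$.
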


\begin{proof}[Proof of (i)]
    It is immediate from Theorem \ref{thm:ParreauFactorsAreMultipliers} that $\mathcal{G}_\mathcal{U} \subseteq \mathscr{M}(\mathcal{S}_{\mathcal{U}}^\perp)$.
    Since $\mathscr{M}(\mathscr{D}^\perp)$ is a characteristic class for any class $\mathscr{D}$, it must contain the smallest characteristic class generated by $\mathcal{G}_\mathcal{U}$, which we saw to be $\overline{\mathcal{G}_{\mathcal{U}}}$ in Remark \ref{rem:QuasiCharacteristicClass}.
    The latter result follows after recalling that $\mathcal{G} = \mathcal{G}_{G^*}$ and $\mathcal{S} = \mathcal{S}_{G^*}$.
\end{proof}

\begin{proof}[Proof of (ii)]
    After recalling that $\mathcal{R} = \bigcup_{p \in E(G^*)}\mathcal{G}_p \subseteq \mathcal{G}_{E(G^*)}$ and $\mathcal{M} = \mathcal{S}_{E(G^*)}$, the desired result follows from Part $(i)$.
\end{proof}

\begin{proof}[Proof of (iii)]
    Theorem \ref{thm:PartiallyRigidFiniteFibers} tells us that if $\mathcal{X}$ is $\alpha$-rigid along some $p \in G^*$, then $\mathcal{X}$ is a finite extension of $\mathcal{X}^p$, so the desired result follows from Part (i) and Corollary \ref{cor:ClosedUnderCompactExtensions}.    
\end{proof}
%%%%%%%%%%%%%%%%%%%%%%%%%%%%%%%%%%%%%%%%%%%%%%%%%%%%%%%%%%%%%%%%%%%%%%%%%%%%%%%%%%%
%%%%%%%%%%%%%%%%%%%%%%%%%%%%%%%%%%%%%%%%%%%%%%%%%%%%%%%%%%%%%%%%%%%%%%%%%%%%%%%%%
\section{The van der Corput approach}\label{sec:vdC}
In this section we present an alternative proof of  Theorem \ref{thm:ParreauFactorsAreMultipliers} based on the following variant of van der Corput trick proved by Bergelson-Zelada in \cite{StronglyMixingPET} in the case that $S=\Z$. 
%The main motivation for this approach is twofold: First, the results in \cite{AlmostMixingOfAllOrders} suggest that any rich-enough additive structure guarantees mixing. Second, the techniques employed in \cite{kanigowski2024multiple} to obtain such a rich additive structure (roughly speaking, the authors of \cite{kanigowski2024multiple} propose that under convenient conditions one can "approximate" an expression of the form 
%$$
%\int_X f_0T^{-n_1}f_1\cdots T^{-n_\ell}f_\ell\text{d}\mu
%$$
%with a related expression of the form 
%$$
%\int_X \tilde f_0T^{-n_1-h_1}\tilde f_1\cdots T^{-n_\ell-h_\ell}\tilde f_\ell\text{d}\mu.)
%$$

\begin{lemma} [Generalized van der Corput trick] \label{3.GeneralVdC}
 Let $S$ be a non-empty set, let $p_1,...,p_\ell\in\beta S$ be ultrafilters, let  $\mathcal H$ be a Hilbert space, and let $j\in\{1,...,\ell\}$. Let
 $$(x_{s_1,...,s_\ell})_{s_1,...,s_\ell\in S}$$
 be norm\text{-}bounded  sequences in $\mathcal H$.
If 
\begin{multline}\label{3.LimitIs0}
\plimgG{p_1}{s_1}{S}\cdots\plimgG{p_{j-1}}{s_{j-1}}{S}\plimgG{p_{j}}{r}{S}\plimgG{p_{j}}{t}{S}\plimgG{p_{j+1}}{s_{j+1}}{S}\cdots\plimgG{p_{\ell}}{s_{\ell}}{S}\\
\langle x_{s_1,...,s_{j-1},r,s_{j+1},...,s_\ell},x_{s_1,...,s_{j-1},t,s_{j+1},...,s_\ell}\rangle=0,
\end{multline}
then for any bounded sequence $(y_{s_1,...,s_{j-1},s_{j+1},...,s_\ell})_{s_1,...,s_{j-1},s_{j+1},...,s_\ell\in S}$ in $\mathcal H$, 
\begin{multline}\label{3."Weak"LimitIs0}
\plimgG{p_1}{s_1}{S}\cdots\plimgG{p_{j-1}}{s_{j-1}}{S}\plimgG{p_{j}}{s_{j}}{S}\plimgG{p_{j+1}}{s_{j+1}}{S}\cdots\plimgG{p_{\ell}}{s_{\ell}}{S}\\
\langle y_{s_1,...,s_{j-1},s_{j+1},...,s_\ell},x_{s_1,...,s_{j-1},s_{j},s_{j+1},...,s_\ell}\rangle=0.
\end{multline}
So, in particular,  \eqref{3."Weak"LimitIs0} implies that for any $y\in \mathcal H$,
$$\plimgG{p_1}{s_1}{S}\cdots\plimgG{p_{j-1}}{s_{j-1}}{S}\plimgG{p_{j}}{s_{j}}{S}\plimgG{p_{j+1}}{s_{j+1}}{S}\cdots\plimgG{p_{\ell}}{s_{\ell}}{S}\\
\langle y, x_{s_1,...,s_{j-1},s_{j},s_{j+1},...,s_\ell}\rangle =0,$$
and, hence, 
$$\plimgG{p_1}{s_1}{S}\cdots\plimgG{p_{j-1}}{s_{j-1}}{S}\plimgG{p_{j}}{s_{j}}{S}\plimgG{p_{j+1}}{s_{j+1}}{S}\cdots\plimgG{p_{\ell}}{s_{\ell}}{S}
 x_{s_1,...,s_{j-1},s_{j},s_{j+1},...,s_\ell}=0$$
 weakly. 
\end{lemma}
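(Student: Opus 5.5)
The plan is to reduce to a single ultrafilter limit by freezing the outer variables, and then run the usual Hilbert-space van der Corput argument for that one limit. All the other limits are limits of bounded scalar sequences in a compact set. So it suffices to prove the following pointwise statement. Fix $s_1,\dots,s_{j-1}$, and let $F(s_1,\dots,s_{j-1})$ denote the iterated limit over $r,t$ and then $s_{j+1},\dots,s_\ell$ appearing in \eqref{3.LimitIs0}. Suppose $\plimgG{p_1}{s_1}{S}\cdots\plimgG{p_{j-1}}{s_{j-1}}{S} F=0$. We must show that the corresponding quantity in \eqref{3."Weak"LimitIs0} vanishes.

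Set $N=\plimgG{p_1}{s_1}{S}\cdots\plimgG{p_{j-1}}{s_{j-1}}{S}$ and $z=(s_1,\dots,s_{j-1})$. The key steps, in order, are as follows.
\begin{enumerate}
\item Define vectors in the ultrapower. Let $\mathcal H'$ be the Hilbert space obtained from bounded families indexed by $(s_{j+1},\dots,s_\ell)$, equipped with the semi-inner product $\langle a,b\rangle':=\plimgG{p_{j+1}}{s_{j+1}}{S}\cdots\plimgG{p_\ell}{s_\ell}{S}\langle a_{s_{j+1},\dots},b_{s_{j+1},\dots}\rangle$, quotiented and completed. For fixed $z$ and $s_j$, let $X_{s_j}\in\mathcal H'$ be the class of $(x_{z,s_j,s_{j+1},\dots})$, and let $Y\in\mathcal H'$ be the class of $(y_{z,s_{j+1},\dots})$. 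Then the inner quantity of \eqref{3."Weak"LimitIs0} for fixed $z$ is $\plimgG{p_j}{s_j}{S}\langle Y,X_{s_j}\rangle'$. The hypothesis for fixed $z$ reads $F(z)=\plimgG{p_j}{r}{S}\plimgG{p_j}{t}{S}\langle X_r,X_t\rangle'$.
\item Let $W=\text{weak-}\plimgG{p_j}{s}{S}X_s$, which exists because bounded sets are weakly compact. Then $\plimgG{p_j}{t}{S}\langle X_r,X_t\rangle'=\langle X_r,W\rangle'$. Taking the $r$-limit gives $F(z)=\|W\|'^2\ge0$. Likewise $\plimgG{p_j}{s_j}{S}\langle Y,X_{s_j}\rangle'=\langle Y,W\rangle'$, whose modulus is at most $C\sqrt{F(z)}$, where $C$ bounds $\|y\|$.
\item Apply the outer limits $N$. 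From $|\cdot|\le C\sqrt{F(z)}$, together with $N F=0$ and the continuity of $\sqrt{\cdot}$, the outer iterated limit of the moduli is $0$. This gives \eqref{3."Weak"LimitIs0}.
\end{enumerate}
The particular case with constant $y$ is immediate. The final weak statement follows by definition, since weak limits along ultrafilters are tested against each $y$.

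The main obstacle is to justify that the $p_j$-limits in $r,t$ may be exchanged with the inner limits $s_{j+1},\dots,s_\ell$. The hypothesis places the double limit $r,t$ outside those inner limits, and this ordering is exactly what the ultrapower construction $\mathcal H'$ absorbs. The point to verify carefully is that the inner limits define a genuine positive semidefinite inner product. This holds because limits of inner products preserve sesquilinearity and positivity. After that, the argument in $\mathcal H'$ is an honest Hilbert-space one. If one prefers to avoid the ultrapower, the alternative is Cauchy–Schwarz applied to finite averages. That route is messier along ultrafilters, so I would use the ultrapower (GNS-type) construction.
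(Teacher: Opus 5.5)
Your proof is correct, but it takes a different route from the paper's.

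\textbf{The paper's route.} The paper stays in $\mathcal H$ and runs the classical averaging form of van der Corput.
\begin{enumerate}
\item It rewrites the $p_j$-limit in $s_j$ as the average of $N$ copies of itself, in independent variables $s_j^{(1)},\dots,s_j^{(N)}$.
\item It moves $|\cdot|^2$ inside all the iterated limits and applies Cauchy--Schwarz to $\langle y,\frac1N\sum_k x_{\dots,s_j^{(k)},\dots}\rangle$.
\item It expands the square. The diagonal contributes at most $1/N$. Each off-diagonal term becomes, after all limits are taken, the hypothesis or its conjugate, since for $k'<k$ the $s_j^{(k')}$-limit is the outer one. So these terms vanish.
\item Letting $N\to\infty$ finishes the proof.
\end{enumerate}

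\textbf{Your route.} You absorb the inner limits $s_{j+1},\dots,s_\ell$ into the ultrapower-type space $\mathcal H'$. The form there is a genuine positive semidefinite inner product, since the iterated limit is a limit along a single product ultrafilter. Weak compactness in $\mathcal H'$ then identifies the hypothesis at each fixed $z$ exactly as $\|W\|'^2$. A single Cauchy--Schwarz and the continuity of $\sqrt{\cdot}$ through the outer limits conclude.

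\textbf{What each buys.} Your version is more structural. It shows that the double limit in the hypothesis is automatically real and nonnegative for each $z$. It also gives at once the quantitative bound that the modulus of the conclusion is at most $C$ times the square root of the hypothesis. The price is constructing $\mathcal H'$ and invoking weak compactness there. The paper's argument is more elementary: it needs only linearity of iterated ultrafilter limits and Cauchy--Schwarz in $\mathcal H$.

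\textbf{A small correction to your framing.} Despite how you describe the ``main obstacle'', no interchange of limits is ever needed. The nesting of the $r,t$ limits outside the $s_{j+1},\dots,s_\ell$ limits in the hypothesis is exactly what $\langle X_r,X_t\rangle'$ encodes.
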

\begin{proof}
For any $s_1,...,s_\ell\in\Z$, let 
$$\xi(s_1,...,s_\ell)=\langle y_{s_1,...,s_{j-1},s_{j+1},...,s_\ell},x_{s_1,...,s_{j-1},s_{j},s_{j+1},...,s_\ell}\rangle$$
and, without loss of generality, suppose that 
$$\|y_{s_1,...,s_{j-1},s_{j+1},...,s_\ell}\|\leq 1\text{ and }\|x_{s_1,...,s_{j-1},s_{j},s_{j+1},...,s_\ell}\|\leq 1.$$
Observe that for any $N\in\N$,
\begin{multline*}
|\plimgG{p_1}{s_1}{S}\cdots\plimgG{p_{j-1}}{s_{j-1}}{S}\plimgG{p_{j}}{s_{j}}{S}\plimgG{p_{j+1}}{s_{j+1}}{S}\cdots\plimgG{p_{\ell}}{s_{\ell}}{S}\xi(s_1,...,s_\ell)|^2\\
=|\frac{1}{N}\sum_{k=1}^N\plimgG{p_1}{s_1}{S}\cdots\plimgG{p_{j-1}}{s_{j-1}}{S}\plimgG{p_{j}}{s^{(k)}_{j}}{S}\plimgG{p_{j+1}}{s_{j+1}}{S}\cdots\plimgG{p_{\ell}}{s_{\ell}}{S}\xi(s_1,...,s_{j-1},s_j^{(k)},s_{j+1},...,s_\ell)|^2\\
=\plimgG{p_1}{s_1}{\Z}\cdots\plimgG{p_{j-1}}{s_{j-1}}{S}\plimgG{p_{j}}{s^{(1)}_{j}}{S}\cdots\plimgG{p_j}{s_j^{(N)}}{S}\plimgG{p_{j+1}}{s_{j+1}}{S}\cdots\plimgG{p_{\ell}}{s_{\ell}}{S}|\frac{1}{N}\sum_{k=1}^N\xi(s_1,...,s_{j-1},s_j^{(s)},s_{j+1},...,s_\ell)|^2\\
\leq \plimgG{p_1}{s_1}{S}\cdots\plimgG{p_{j-1}}{s_{j-1}}{S}\plimgG{p_{j}}{s^{(1)}_{j}}{S}\cdots\plimgG{p_j}{s_j^{(N)}}{S}\plimgG{p_{j+1}}{s_{j+1}}{S}\cdots\plimgG{p_{\ell}}{s_{\ell}}{S}\\
\|y_{s_1,...,s_{j-1},s_{j+1},...,s_\ell}\|^2\|\frac{1}{N}\sum_{k=1}^Nx_{s_1,...,s_{j-1},s_{j}^{(k)},s_{j+1},...,s_\ell}\|^2\\
\leq  \plimgG{p_1}{s_1}{S}\cdots\plimgG{p_{j-1}}{s_{j-1}}{S}\plimgG{p_{j}}{s^{(1)}_{j}}{S}\cdots\plimgG{p_j}{s_j^{(N)}}{S}\plimgG{p_{j+1}}{s_{j+1}}{S}\cdots\plimgG{p_{\ell}}{s_{\ell}}{S}\\ 
\frac{1}{N^2}\sum_{k,k'=1}^N\langle x_{s_1,...,s_{j-1},s_{j}^{(k)},s_{j+1},...,s_\ell},x_{s_1,...,s_{j-1},s_{j}^{(k')},s_{j+1},...,s_\ell}\rangle\\
\leq \frac{1}{N}+\plimgG{p_1}{s_1}{S}\cdots\plimgG{p_{j-1}}{s_{j-1}}{S}\plimgG{p_{j}}{s^{(1)}_{j}}{S}\cdots\plimgG{p_j}{s_j^{(N)}}{S}\plimgG{p_{j+1}}{s_{j+1}}{S}\cdots\plimgG{p_{\ell}}{s_{\ell}}{S}\\
\frac{2}{N^2}\sum_{k=2}^N\sum_{k'=1}^{k-1}\text{Re}(\langle x_{s_1,...,s_{j-1},s_{j}^{(k)},s_{j+1},...,s_\ell},x_{s_1,...,s_{j-1},s_{j}^{(k')},s_{j+1},...,s_\ell}\rangle)=\frac{1}{N}.
\end{multline*}
Letting $N\rightarrow\infty$, we see that \eqref{3."Weak"LimitIs0} holds.
\end{proof}

Theorem \ref{thm:ParreauFactorsAreMultipliers} is a corollary of Lemma \ref{lem:IndependentJoiningsViaVdC}, which was also implicitly proven in Section \ref{sec:MultipliersMainResults} with different methods.

\begin{lemma}\label{lem:IndependentJoiningsViaVdC}
Let $(G,+)$ be a countable abelian group, let $\mathcal X$, $\mathcal Y$, and $\mathcal Z$ be measure-preserving systems, and let $\mathcal U\subseteq G^*$ be a non-empty family of  ultrafilters. Suppose that $\mathcal Z$ is $\mathcal U$-mixing and that $\mathcal Y\perp \mathcal Z$. Let $\sigma$ be a joining of $\mu$ and $\nu$.
Then, for any $p_1,...,p_\ell\in \mathcal U$,  any $f_1,...,f_\ell\in L^\infty(X,\mu)$, any $F\in L^\infty(Y,\nu)$,   any $H\in L^{\infty}(Z,\rho)$, and any joining of the form $\lambda=\sigma\lor \rho$, one has 
\begin{multline}\label{eq:IndependentJoiningVdC}
\int_{X\times Y\times Z}H(z)F(y)\prod_{j=1}^\ell T^{p_j}f_j(x)\text{d}\lambda(x,y,z)\\
=\int_{X\times Y} F(y) \prod_{j=1}^\ell T^{p_j}f_j(x)\text{d}\sigma(x,y)\int_Z H(z)\text{d}\rho(z).
\end{multline}
\end{lemma}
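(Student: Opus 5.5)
We argue by induction on $\ell$, using the generalized van der Corput trick (Lemma \ref{3.GeneralVdC}) in place of the relatively independent joinings of Section \ref{sec:MultipliersMainResults}. First reduce to $\int_Z H\,\mathrm{d}\rho=0$: subtract the mean of $H$ and use that the marginal of $\lambda$ on $X\times Y$ is $\sigma$. Then the goal is to show that the left side of \eqref{eq:IndependentJoiningVdC} vanishes. Write $T^{p_j}f_j=\plimgG{p_j}{g}{G}T^{g}f_j$ weakly. Each inner limit can be moved out of the integral, one coordinate at a time, because multiplication by the remaining bounded functions is a bounded operator. This gives
\begin{equation*}
\int H(z)F(y)\prod_{j=1}^\ell T^{p_j}f_j(x)\,\mathrm{d}\lambda=\plimgG{p_1}{g_1}{G}\cdots\plimgG{p_\ell}{g_\ell}{G}\int H(z)F(y)\prod_{j=1}^\ell f_j(T^{g_j}x)\,\mathrm{d}\lambda .
\end{equation*}
The order of the limits is fixed to match the hypothesis of Lemma \ref{3.GeneralVdC}. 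By invariance of $\lambda$ under $T^{g_\ell}\times S^{g_\ell}\times R^{g_\ell}$, the integrand becomes $H(R^{-g_\ell}z)F(S^{-g_\ell}y)f_\ell(x)\prod_{j<\ell}f_j(T^{g_j-g_\ell}x)$.

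Next, apply Lemma \ref{3.GeneralVdC} in $\mathcal H=L^2(X\times Y\times Z,\lambda)$ with $j=\ell$. Take
\begin{equation*}
x_{g_1,\dots,g_\ell}=\Big(\prod_{j<\ell}T^{g_j}f_j(x)\Big)\,T^{g_\ell}f_\ell(x)\,H(z),
\end{equation*}
where, when passing to the inner product, we treat $F(y)$ as the $y$-vector, which does not depend on $g_\ell$. In fact it is cleaner to take $x_{\dots,g_\ell}=T^{g_\ell}f_\ell(x)\,\bar F(y)$-free and to pull $F\prod_{j<\ell}T^{g_j}f_j$ into the test vector $y_{\dots}$, so set
\begin{equation*}
x_{g_1,\dots,g_\ell}=H(z)\,T^{g_\ell}f_\ell(x),\qquad y_{g_1,\dots,g_{\ell-1}}=\overline{F(y)\prod_{j<\ell}T^{g_j}f_j(x)}.
\end{equation*}
The van der Corput hypothesis then requires
\begin{equation*}
\plimgG{p_\ell}{r}{G}\plimgG{p_\ell}{t}{G}\int |H(z)|^2\,T^{r}f_\ell\,\overline{T^{t}f_\ell}\,\mathrm{d}\lambda=0 .
\end{equation*}
This expression is $\int |H|^2(z)\,\big|T^{p_\ell}f_\ell\big|^2\ldots$; more precisely, after taking limits it equals $\int_{X\times Z}|H(z)|^2\,|T^{p_\ell}f_\ell(x)|^2$-type quantities against the $X\times Z$ marginal of $\lambda$. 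That marginal is not a product in general, so this choice fails. Instead the $H$ factor must carry the dependence on $g$. I therefore choose
\begin{equation*}
x_{g_1,\dots,g_\ell}=R^{-g_\ell}H(z)\,F(S^{-g_\ell}y)\prod_{j<\ell}f_j(T^{g_j-g_\ell}x)
\end{equation*}
and $y=f_\ell(x)$ (complex conjugates as needed). The double limit in \eqref{3.LimitIs0} is then
\begin{equation*}
\int \overline{H}(R^{-r}z)H(R^{-t}z)\,\overline{F}(S^{-r}y)F(S^{-t}y)\prod_{j<\ell}\overline{f_j}(T^{g_j-r}x)f_j(T^{g_j-t}x)\,\mathrm{d}\lambda .
\end{equation*}
Shift by $r$ and set $h=t-r$. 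The integrand is now a product of $\overline{H}\,R^{-h}H$, of $\overline{F}\,S^{-h}F$, and of $\ell-1$ factors of the form $\overline{f_j}\,T^{-h}f_j$, composed with $T^{g_j-r}$. By the Proposition on $p$-limits, the iterated limit $\plimgG{p_\ell}{r}{G}\plimgG{p_\ell}{t}{G}$ of a function of $t-r$ is a limit along $-p_\ell+p_\ell\in\Delta(\mathcal U)$. By Proposition \ref{prop:DifferentFamiliesForSameUmixing}, $\mathcal Z$ is $\Delta(\mathcal U)$-mixing.

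The main obstacle is to show that this double limit vanishes. Here the induction hypothesis enters, applied with the new functions $\tilde f_j=\overline{f_j}\,T^{-h}f_j$ (for $j<\ell$), the new $\tilde F=\overline{F}\,S^{-h}F$, and the new $\tilde H=\overline{H}\,R^{-h}H$. The limits over the $g_j$ with $j<\ell$ come first in the order, and the $r$-shift is absorbed into them by invariance of $\lambda$. The induction hypothesis with $\ell-1$ functions then says that the $Z$-coordinate factors off, so the expression equals
\begin{equation*}
\Big(\int_{X\times Y}\cdots\,\mathrm{d}\sigma\Big)\cdot\langle R^{h}H,H\rangle .
\end{equation*}
The first factor is bounded. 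The second factor tends to $0$ along $-p_\ell+p_\ell$, because $H$ has mean zero and $\mathcal Z$ is $\Delta(\mathcal U)$-mixing. The delicate point is whether the order of the limits allows the induction hypothesis to be applied before the $h$-limit is taken. I expect to need to put the $p_\ell$-limits outermost, that is, $j=1$ in Lemma \ref{3.GeneralVdC} with the indices relabelled. The inner limits over the remaining $g_j$ then produce exactly the quantities $T^{p_j}\tilde f_j$, to which the induction hypothesis applies for each fixed $h$.

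The base case $\ell=0$ is the statement $\lambda|_{Y\times Z}=\nu\otimes\rho$, which holds because $\mathcal Y\perp\mathcal Z$. The case $\ell=1$ reduces to $\langle R^{h}H,H\rangle\to0$ together with boundedness. Once \eqref{eq:IndependentJoiningVdC} is proved, density of the algebra generated by the $T^{p}f$ in $L^2(X,\mu)$ for $\mathcal U$-generated $\mathcal X$ yields Theorem \ref{thm:ParreauFactorsAreMultipliers}.
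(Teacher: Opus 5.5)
Once you settle on placing the $p_\ell$-limit outermost and applying Lemma~\ref{3.GeneralVdC} in that coordinate (with $f_\ell$ as the test vector), your argument is correct and essentially the paper's proof. You shift by invariance of $\lambda$, rewrite the van der Corput correlation in terms of $\overline{H}R^{-h}H$, $\overline{F}S^{-h}F$ and $\overline{f_j}T^{-h}f_j$ with $h=t-r$, let the inner $p_j$-limits produce $T^{p_j}(\overline{f_j}T^{-h}f_j)$ so that the induction hypothesis splits off $\langle R^{-h}H,H\rangle$ for each fixed $h$, and kill that factor with the $\mathcal U$-mixing of $\mathcal Z$. Discard the earlier ordering with the $p_\ell$-limits innermost, since there the induction hypothesis cannot be applied before the $h$-limit is taken; starting the induction at $\ell=0$ instead of $\ell=1$ is a harmless variation.
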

\begin{proof}
It suffices to show that when $\int_z H\text{d}\rho=0$, the left-hand side of \eqref{eq:IndependentJoiningVdC} equals zero. 
To do this, we proceed by induction on $\ell$.\\

$\bullet$ \textit{ Case $\ell=1$.} When $\ell=1$, we have 
\begin{multline}\label{eq:VdCK=1.1}
\int_{X\times Y\times Z} T^{p_1}f_1(x)F(y)H(z)\text{d}\lambda(x,y,z)=
\plimgG{p_1}{g_1}{G}\int_{X\times Y\times Z} T^{g_1}f_1(x)F(y)H(z)\text{d}\lambda(x,y,z) \\
=\plimgG{p_1}{g_1}{G}\int_{X\times Y\times Z} f_1(x)S^{-g_1}F(y)R^{-g_1}H(z)\text{d}\lambda(x,y,z)
\end{multline}
By Lemma \ref{3.GeneralVdC} applied with $\ell=2$ and $p_2=\{A\subseteq G\,|\,0\in G\}$ (so, in particular, $p_1=p_1+p_2$), the expression in \eqref{eq:VdCK=1.1} equals zero provided that 
\begin{multline}\label{eq:VdCk=1.2}
0=\plimgG{p_1}{r}{G}\plimgG{p_1}{t}{G}\plimgG{p_2}{g}{G}\int_{X\times Y\times Z} S^{-r-g}F(y)R^{-r-g}H(z)S^{-t-g}F(y)R^{-t-g}H(z)\text{d}\lambda(x,y,z)\\
=\plimgG{p_1}{r}{G}\plimgG{p_1}{t}{G}\int_{X\times Y\times Z} S^{-r}F(y)R^{-r}H(z)S^{-t}F(y)R^{-t}H(z)\text{d}\lambda(x,y,z)=0.
\end{multline}
Since $\mathcal{Y}\perp\mathcal{Z}$, we see that $\lambda$ projects to $\nu\otimes\rho$ on $Y\times Z$.
Since $\mathcal Z$ is $\mathcal U$-mixing, we obtain
\begin{multline}
    \plimgG{p_1}{r}{G}\plimgG{p_1}{t}{G}\int_{X\times Y\times Z} S^{-r}F(y)R^{-r}H(z)S^{-t}F(y)R^{-t}H(z)\text{d}\lambda(x,y,z)\\
    = \plimgG{p_1}{r}{G}\plimgG{p_1}{t}{G}\left(\int_{Y}FS^{-t+r}F\text{d}\nu\int_Z HR^{-t+r}H\text{d}\rho\right)\\
    =\left(\plimgG{p_1}{r}{G}\plimgG{p_1}{t}{G}\int_{Y}FS^{-t+r}F\text{d}\nu\right)\left(\int_X H\text{d}\rho\right)^2=0.
\end{multline}
Thus, \eqref{eq:IndependentJoiningVdC} holds when $\ell=1$.\\ 

$\bullet$ \textit{ Case $\ell=k+1$.} Let $k\in\N$ and suppose now that \eqref{eq:IndependentJoiningVdC} holds for any $\tilde f_1,...,\tilde f_k\in L^\infty(X,\mu)$, any $\tilde F\in L^\infty(Y,\nu)$, and any $\tilde H\in L^\infty(Z,\rho)$. 
Set $\ell=k+1$ and  take $f_1,...,f_\ell\in L^\infty(X,\mu)$, $F\in L^\infty(Y,\nu)$, and $H\in L^\infty(Z,\rho)$ such that $\int_X H\text{d}\rho=0$.
Note that, by Theorem \ref{3.GeneralVdC} and the $T\times S\times R$ invariance of $\lambda$, 
\begin{multline*}
0
=  \int_{X\times Y\times Z} \prod_{j=1}^\ell T^{p_j}f_j(x)(F(y)H(z))\,\text{d}\lambda(x,y,z)\\
= \plimgG{p_\ell}{g_\ell}{G}\cdots \plimgG{p_1}{g_1}{G} \int_{X\times Y\times Z}\prod_{j=1}^\ell T^{-\sum_{i\neq j}g_i}f_j(S^{-\sum_{i=1}^\ell g_i}FR^{-\sum_{i=1}^\ell g_i}H)\,\mathrm{d}\rho,
\end{multline*}
provided that
\begin{multline*}
0
= \plimgG{p_\ell}{r_\ell}{G}\plimgG{p_\ell}{t_\ell}{G} \plimgG{p_k}{g_k}{G}\cdots\plimgG{p_1}{g_1}{G} \\
\int_{X\times Y\times Z} R^{-\sum_{i=1}^k g_i}\Big(HR^{-t_\ell+r_\ell}H\Big)S^{-\sum_{i=1}^k g_i}\Big( FS^{-t_\ell+r_\ell}F\Big)
\left(\prod_{j=1}^kT^{-\sum_{\substack{i=1\\ i\neq j}}^k g_i}(f_jT^{-t_\ell+r_\ell}f_j)\right)\text{d}\lambda.
\end{multline*} 
For each $g\in G$, Set
\[
\tilde H_g:=HR^{-g}H,
\;\tilde F_g := F S^{-g}F\;,
\qquad
\tilde f_{j,g} := f_jT^{-g}f_j,\quad j=1,\dots,k.
\]
By the inductive hypothesis and since $\mathcal Z$ is $\mathcal U$-mixing, we obtain
\begin{multline*}
\plimgG{p_\ell}{r_\ell}{G}\plimgG{p_\ell}{t_\ell}{G}\plimgG{p_k}{g_k}{G}\cdots\plimgG{p_1}{g_1}{G}\\
\int_{X\times Y\times Z} R^{-\sum_{i=1}^k g_i}\tilde H_{t_\ell-r_\ell}S^{-\sum_{i=1}^kg_i}\tilde F_{t_\ell-r_\ell}\left(\prod_{j=1}^kT^{-\sum_{\substack{i=1\\ i\neq j}}^k g_i}\tilde f_{j,(t_\ell-r_\ell)}\right)\text{d}\lambda\\
= \plimgG{p_\ell}{r_\ell}{G}\plimgG{p_\ell}{t_\ell}{G}
\int_{X\times Y\times Z} \tilde H_{t_\ell-r_\ell}(z)\tilde F_{t_\ell-r_\ell}(y)\prod_{j=1}^kT^{p_j}\tilde f_{j,(t_\ell-r_\ell)}(x)\text{d}\lambda(x,y,z)\\
=\plimgG{p_\ell}{r_\ell}{G}\plimgG{p_\ell}{t_\ell}{G}
\left(\int_{Z} HT^{-t_\ell+r_\ell}H\text{d}\rho(z)\right)\left(\int_{X\times Y}\tilde F_{t_\ell-r_\ell}(y)\prod_{j=1}^kT^{p_j}\tilde f_{j,(t_\ell-r_\ell)}(x)\text{d}\sigma(x,y)\right)=0,
\end{multline*}
which completes the induction.
\end{proof}
%%%%%%%%%%%%%%%%%%%%%%%%%%%%%%%%%%%%%%%%%%%%%%%%%%%%%%%%%%%%%%%%%%%%
\section{Questions and Future Work}\label{sec:Questions}
The study of the spectral properties of Interval Exchange Transformations has long been a topic of interest.
In 1967 Katok and St{\"e}pin \cite{ApproximationsInErgodicTheory} showed that almost every 3-IET has simple spectrum and is weakly mixing, and in 1980 Katok \cite{IETsAreNotMixing} showed that no IET is strongly mixing.
In 1984 Veech \cite{MetricTheoryOfIETs1} showed that almost every IET is totally ergodic, rigid, rank 1, and has simple spectrum.
While Veech had also shown that many IETs are weakly mixing, it was a break through when Avila and Forni \cite{AEIETIsWM} showed that almost every IET is either weakly mixing or an irrational rotation.
As discussed in \cite[Page 15]{Lemanczyk2009Spectral} and \cite{WMBddedCutParameter}, it remains an open question as to whether or not every IET has singular spectrum. 
The following related question is also open.

\begin{question}\label{Q:IETsSpectrallyDisjoint}
    Is every IET spectrally disjoint from every strongly mixing system?
\end{question}

We observe that a positive answer to Question \ref{Q:IETsSpectrallyDisjoint} would imply that every IET has singular spectrum.

As we showed in Example \ref{ex:Chacon}, Chacon's transformation provides a natural example of a $p$-generated system which is not rigid.  
In light of Theorem \ref{thm:PartiallyRigidFiniteFibers}, we see that IETs are another natural candidate for such systems.
\begin{question}\label{Q:AreIETsParreau}
    Suppose that $\mathcal{X}$ is an IET that is partially rigid along some $p \in \mathbb{Z}^*$.
    \begin{enumerate}[(i)]
        \item Is $\mathcal{X}$ a $p$-generated system?

        \item More generally, is $\mathcal{X}$ a ${\mathcal{U}}$-generated system for some ${\mathcal{U}}$?
    \end{enumerate}
\end{question}

We worked with abelian groups because we required $T^p$ to be a normal operator to use the results of Section \ref{ssec:ImageKernelDecomposition}.
Furthermore, if $G$ is abelian, then $T^p$ is the Markov operator associated to a self-joining of $\mathcal{X}$, and this fact was used in the proof of Theorem \ref{thm:ParreauFactorsAreMultipliers}.
Nonetheless, the definitions of $\mathcal{U}$-mixing, $\mathcal{U}$-generated system, and $\mathcal{U}$-generated factor extend to the setting of countably infinite groups without any changes. 
Thus, we are left with the following questions.

\begin{question}\label{Q:NonCommutativeParreau}
    Let $G$ be a countably infinite noncommutative group, $\mathcal{X}$ a measure-preserving $G$-system, and $\emptyset \neq \mathcal{U} \subseteq G^*$.
    \begin{enumerate}[(i)]
        \item Is the factor $\mathcal{X}^\mathcal{U}$ also a $\mathcal{U}$-generated system? In other words, do we still have  $\mathcal X^{\mathcal U}=(\mathcal X^{\mathcal U})^\mathcal U$?

        \item If $\mathcal{X}$ is a $\mathcal{U}$-generated system, is $\mathcal{X}$ disjoint from all $\mathcal{U}$-mixing systems?
    \end{enumerate}
\end{question}

It seems unlikely that our proof of Theorem \ref{thm:ParreauFactorsAreMultipliers} presented in Section \ref{sec:MultipliersMainResults} will help in the noncommutative setting, but the methods of Section \ref{sec:vdC} may help study the situation when $G$ is nilpotent.

In the introduction we saw that $\mathscr{M}(\mathcal{E}^\perp) \subsetneq \mathcal{E}^\perp$ and $\mathscr{M}(\mathcal{W}^\perp) \subsetneq \mathcal{W}^\perp$.
We have not yet determined whether or not $\mathscr{M}(\mathcal{M}^\perp) \subsetneq \mathcal{M}^\perp, \mathscr{M}(\mathcal{S}^\perp) \subsetneq \mathcal{S}^\perp,$ or $\mathscr{M}(\mathcal{A}^\perp) \subsetneq \mathcal{A}^\perp$, which leads us to the following:

\begin{question}\label{Q:Multipliers}
    Does there exist a class of systems $\mathscr{D} \subseteq \mathcal{W}$ that is closed under factors, countable direct products, and inverse limits, for which we also have $\mathscr{M}(\mathscr{D}^\perp) = \mathscr{D}^\perp$?
\end{question}

We have seen that $\mathcal{U}$-mixing encapsulates many but not all notions of mixing that are of interest. 
Another well-studied notion of mixing in the setting of $\mathbb{Z}$-systems is that of light mixing.
The $\mathbb{Z}$-system $\mathcal{X}$ is \textbf{lightly mixing} if for every $A,B \in \mathscr{B}^+$, we have $\liminf_{n\rightarrow\infty}\mu(A\cap T^nB) > 0$.
Light mixing sits strictly inbetween mild mixing and partial mixing.
Furthermore, Kingman \cite{LightlyMixingIsClosedUnderCountableProducts} showed that the class of lightly mixing systems is closed under direct products and inverse limits.
Since Adams \cite{DenseLMixingIsNotLMixing} showed that light mixing on a dense algebra of sets does not necessarily imply light mixing, we are led to the following:

\begin{question}\label{q:LightMixingIsNotUMixing}
    Is there any $\mathcal{U} \subseteq \mathbb{Z}^*$ for which $\mathcal{S}_\mathcal{U}$ coincides with the class of lightly mixing systems?
\end{question}
%%%%%%%%%%%%%%%%%%%%%%%%%%%%%%%%%%%%%%%%%%%%%%%%%%%%%%%%%%%%%%%%%%%%
\bibliographystyle{abbrv}
\begin{center}
	\bibliography{references}
\end{center}
\end{document}